\newcommand{\eps}{\varepsilon}
\newcommand{\la}{\langle}
\newcommand{\ra}{\rangle}
\def\vp{\varphi}
\def\R{\mathbb{R}}
\DeclarePairedDelimiter{\ceil}{\lceil}{\rceil}
\def\pd#1{{\color{red}#1}} % Паша
\newtheorem{theorem}{Theorem}
\newtheorem{lemma}{Lemma}
\newtheorem{remark}[]{Remark}
\begin{document}
\title{Primal-dual accelerated gradient methods with small-dimensional relaxation oracle}

\author{\name{Yurii Nesterov\textsuperscript{a} \and Alexander Gasnikov\textsuperscript{b,c} \and Sergey Guminov\textsuperscript{b,c} \and Pavel Dvurechensky\textsuperscript{d,c}} \affil{\textsuperscript{a} Center for Operations Research and Econometrics (CORE), Catholic University of Louvain (UCL), Louvain-la-Neuve, Belgium;
National Research University Higher School of Economics, Moscow, Russia;
\textsuperscript{b}
Moscow Institute of Physics and Technology, Dolgoprudny, Russia;
\textsuperscript{c}
Institute for Information Transmission Problems RAS, Moscow, Russia;
\textsuperscript{d}
Weierstrass Institute for Applied Analysis and Stochastics, Berlin, Germany
}}

\date{Received:  / Accepted: }

\maketitle

\begin{abstract}
In this paper, a new variant of accelerated gradient descent is proposed. The proposed method does not require any information about the objective function, uses exact line search for the practical accelerations of convergence, converges according to the well-known lower bounds for both convex and non-convex objective functions, possesses primal-dual properties and can be applied in the non-euclidian set-up. As far as we know this is the first such method possessing all of the above properties at the same time. We also present a universal version of the method which is applicable to non-smooth problems. We demonstrate how in practice one can efficiently use the combination of line-search and primal-duality by considering a convex optimization problem with a simple structure (for example, linearly constrained). \iffalse The basic idea that lies behind our approach is the replacing 'coupling' step (convex combination of two auxiliary sequence of points to obtain new 'model' sequence of points) by line search on a segment $[0,1]$. This lead to the practical acceleration and increase the number of good theoretical properties of the method.\fi

\keywords{accelerated gradient descent, line-search, primal-dual methods, convex optimization, nonconvex optimization}
% \PACS{PACS code1 \and PACS code2 \and more}
% \subclass{MSC code1 \and MSC code2 \and more}
\begin{amscode}
90C25, 68Q25
\end{amscode}
\end{abstract}

\section{Introduction}
The first accelerated gradient method for smooth convex optimization problems dates back to 1980s \cite{nesterov1983method}. This method has optimal \cite{nemirovskii1983problem} convergence rate $f(x^k)-f(x_*) = O(1/k^2)$, where $k$ is the iteration counter, $f$ is the objective function and $x_*$ is an optimal point. It is less known that that there were earlier versions of optimal methods. The key difference is that, on each step, those versions used small-dimensional relaxation oracle (sDR-oracle), i.e. auxiliary minimization over some small-dimensional subspace. Thus, these early methods are optimal under additional assumption of availability of the sDR-oracle. This assumption is rarely satisfied in practice, since solving auxiliary minimization problem on each step can be very costly or can lead to divergence of the method due to accumulating error in practice. This seems to be the main reason why mostly the fixed-step accelerated methods \cite{nesterov1983method} have been developing in the last decades \cite{nesterov2004introductory,nesterov2005smooth,nesterov2013gradient}. This choice of the direction of research by the community is supported by the fact that availability of sDR-oracle does not improve the worst-case theoretical guarantee of the optimal gradient-type procedure for smooth convex optimization problems \cite{nesterov1989effective}.

There has been a resurging interest in first-order methods using sDR-oracle \cite{narkiss2005sequential,deklerk2017worst-case,guminov2017universal}.
One of the reasons is that, in practice, small-dimensional relaxation allows local adaptation to the curvature of the objective function, which can dramatically improve the practical convergence rate, the classical example being conjugate gradient methods.
At the same time, there are problem classes for which the computational overhead of the sDR-oracle is minimal \cite{narkiss2005sequential}. This includes popular machine learning problems known as generalized linear models, including SVM, linear regression, logistic regression, etc., and optimization problems with linear equality constraints. In the first case, the problem itself is of the form
\begin{equation}
\label{init}
f(x)=F(A^Tx) \to \min_{x\in \R^n},
\end{equation}
where $A\in \R^{n\times m}$. In the latter case, the dual problem has the same form as \eqref{init}. Even if the matrix $A$ is large and dense, and $F(y)$ can be calculated in $O(n)$ arithmetic operations, the whole small-dimensional relaxation has almost the same complexity as a single calculation of the gradient of $f(Ax)$. Hence, in this case, the operation complexities of the fixed-step methods and the methods with small-dimensional relaxation are almost identical. 

In this paper, we propose a new accelerated gradient method utilizing a sDR-oracle. Instead of using pre-defined sequences of parameters of the method, such as step sizes, we choose these parameters by minimizing the objective in some specially constructed direction. This makes our method related to conjugate gradient methods. The difference is that our method has  $O(1/k^2)$ convergence rate for general convex objectives. Moreover, our method is adaptive to the smoothness of the objective and does not require the Lipschitz constant of the gradient to be known, unlike classic accelerated gradient descent method \cite{nesterov2004introduction}. The method also converges to a stationary point for non-convex objectives, which means that it is capable of adapting to the local convexity of the objective.  We also estimate the rate of convergence of the method in terms of the norm of the gradient for convex, $\gamma$-weakly-quasi-convex and non-convex objectives.

Further on, we analyze potentially non-smooth functions with H\"older-continuous subgradient and propose a generalization of our method for this case. We prove that our method is universal in the sense of \cite{nesterov2015universal}, i.e. does not require any a priori knowledge of the smoothness of the objective and automatically according to the lower bounds for the class of convex objectives with H\"older-continuous subgradient.

Special attention is devoted to the analysis  of the primal-dual properties of the proposed methods for the class of strongly convex linearly constrained problems. In this case the dual problem has the form \eqref{init} and is solved by our method with the ultimate goal to reconstruct the solution to the primal problem.

Finally, we describe how these methods can be accelerated to have optimal linear convergence under the additional assumption that the objective is strongly convex with known parameter of strong convexity.

%\pd{MAYBE TODO: add explicitly convergence rates/complexities a in the "Lan's" style, see e.g. \url{https://arxiv.org/pdf/1310.3787.pdf}.}
%Problems of the form \eqref{init} typically arise as dual problems to $$\phi(z) \to \min_{Az = 0}$$ where $\phi(z)$ has a simple (dual-friendly) structure (i.e. separable). So there appears a necessity to investigate the primal-dual properties (see \cite{nesterov2005smooth,nesterov2009primal} for fixed step procedures) of accelerated gradient methods with exact-line search.
% In this paper we try to solve both of the aforementioned problems: the applicability to non-convex problems and the primal-duality in convex case.

\textbf{Related work.}
It is quite hard to cover all the vast literature on accelerated gradient methods \cite{nesterov1983method,nesterov2005smooth,beck2009fast,nesterov2013gradient,nesterov2004introduction}. The versions adaptive to the Lipschitz constant of the gradient may be found in \cite{nesterov2013gradient,beck2009fast}. Usually this type of adaptivity is called "line-search" or "backtracking". Papers \cite{beck2014fast,dvurechensky2017adaptive,dvurechensky2018computational} consider the question of primal-duality of these methods in combination with "backtracking" used for adaptivity to the Lipschitz constant. The authors of \cite{ghadimi2016accelerated}, by a special a priori choice of step sizes, construct a single method which works optimally for convex and non-convex problems.
Universal gradient methods for convex problems were proposed in \cite{nesterov2015universal} and extended in \cite{lan2015generalized} for non-convex problems and in \cite{yurtsever2015universal} for primal-dual setting. Finally, there were some attempts to combine universality with small-dimension relaxation \cite{drori2018efficient,guminov2017universal}.
Concerning conjugate gradient methods, we refer the reader to a good survey \cite{neculai200840conjugate} and the classical book \cite{nocedal2006numerical}.
% \pd{TODO: maybe add something. At least on conjugate gradients.}
% \pd{TODO: compare with \url{https://arxiv.org/pdf/1310.3787.pdf}. They also have $1/\eps$ convergence in the squared gradient norm for non-convex problems and $1/\eps^3$ for convex, but with a diferent stepsize. Our method is free of this distinction and will automatically accelerate on the areas of convexity of the function.}
% ASTM and other adaptive primal-dual methods
% other adaptive methods \cite{nesterov2013gradient}

% \begin{table}[th]\footnotesize
%   \centering
%   \begin{tabular}{| c | c | p{3cm} | p{2.2cm} |}
%       \hline
%       Algorithm & Adaptive & Optimal &  Primal-dual \\
%       \hline
%       Universal; Nesterov, 2015 & Yes, but without line search & For convex problems & Yes  \hspace{-4mm}\\
%       %
%       \hline
%       Universal; Lan et al., 2016 & Yes, but without line search & Yes & Yes \hspace{-4mm} \\
%       %
%       \hline
%       Universal; Guminov et al., 2017 & Yes, but with particular line search &  Yes & Yes \hspace{-4mm}\\
%       %
%       \hline
%       Optimized; Drory et al., 2018 & Yes & For convex problems  & No \hspace{-4mm}\\
%       %
%       \hline
%       Nemirovski's CG; Dvurechensky et. al., 2018 \hspace{-2mm} & Yes, but 2D-search is used & Yes & Yes \hspace{-4mm}\\
%       %
%           \hline
%       This paper & Yes & Yes & Yes \hspace{-4mm}\\ \hline

% % ...
%   \end{tabular}
%   \label{Comparison}
%   \caption{Comparision with related works}
% \end{table}

The structure of the paper is as follows. In section 2 we describe the main algorithm and analyze its convergence. We also show that this method admits a stopping criterion. The subsections of section 2 are dedicated to the modifications of the method applicable to $\gamma$-weakly-quasi-convex and strongly convex objectives. Then the dependence of the method on line-search accuracy is discussed briefly. In section 3 we present the universal version of the algorithm and establish its convergence for convex and non-convex objectives. Once again, a subsection is dedicated to strongly convex objectives. Section 4 contains the results concerning the primal-dual properties of our method. Finally, in section 5 one can find the results of numerical experiments. 

\textbf{Notation.} 
Let $E$ be a finite-dimensional real vector space and $E^*$ be its dual. We denote the value of a linear function $g \in E^*$ at $x\in E$ by $\la g, x \ra$. Let $\|\cdot\|$ be some norm on $E$, $\|\cdot\|_{*}$ be its dual, defined by $\|g\|_{*} = \max\limits_{x} \big\{ \la g, x \ra, \| x \| \leqslant 1 \big\}$. Given a vector $g \in E^*$, we denote by $(g)^{\#} = \arg \max_{\|s\|\leqslant 1 } \la g, s \ra$. We use $\nabla f(x)$ to denote any subgradient of a function $f$ at a point $x \in {\rm dom} f$.

We choose a {\it prox-function} $d(x)$, which is continuous, convex on $Q$ and
\begin{enumerate}
	\item admits a continuous in $x \in Q^0$ selection of subgradients 	$\nabla d(x)$, where $Q^0 \subseteq Q$  is the set of all $x$, where $\nabla d(x)$ exists;
	\item $d(x)$ is $1$-strongly convex on $Q$ with respect to $\|\cdot\|$, i.e., for any $x \in Q^0, y \in Q$ $d(y)-d(x) -\la \nabla d(x) ,y-x \ra \geqslant \frac12\|y-x\|^2$.
\end{enumerate}
Without loss of generality, we assume that $\min\limits_{x\in Q} d(x) = 0$.

We define also the corresponding {\it Bregman divergence} $V(x,z) = d(x) - d(z) - \la \nabla d(z), x - z \ra$, $x \in Q, z \in Q^0$. Standard proximal setups, i.e. Euclidean, entropy, $\ell_1/\ell_2$, simplex, nuclear norm, spectahedron can be found in \cite{ben-tal2015lectures}.

\section{Adaptive methods for smooth optimization}
\label{S:smooth}
We consider the optimization problem 
\begin{equation}
\label{COnvProb}
f(x) \to \min_{x \in E},
\end{equation}
and denote a solution to this problem as $x_\ast$. Our main assumption in this section is that the objective $f$ is $L$-smooth, i.e. is continuously differentiable and has Lipschitz-continuous gradient
\begin{equation}
\label{L}
\|\nabla f(y) - \nabla f(x)\| \leqslant L \|x-y\|_*, \quad \forall x, y \in E.
\end{equation}

Our main algorithm in this section is listed as Algorithm \ref{AGMsDR}.
\begin{algorithm}[!h]
\caption{Accelerated Gradient Method with Small-Dimensional Relaxation (AGMsDR)}
\label{AGMsDR}
\begin{algorithmic}[1]
%\REQUIRE $x^0 = v^0$, $L$
\ENSURE $x^k$
\STATE Set $k = 0$, $A_0=0$, $x^0 = v^0$, $\psi_0(x) = V(x,x^0)$
\FOR{$k \geqslant 0$}
\STATE \begin{equation}
    \label{eq:beta_k_y_k_def}
    \beta_k = \arg\min_{\beta \in \left[0, 1 \right]} f\left(v^k + \beta (x^k - v^k)\right), \quad y^k = v^k + \beta_k (x^k - v^k).
\end{equation}
\STATE 
Option a), $L$ is known, 
\begin{equation}
\label{eq:xkp1_opt_a}
    x^{k+1} = \arg\min_{x \in E} \left\{ f(y^k) + \langle \nabla f(y^k), x - y^k \rangle + \frac{L}{2} \|x - y^k \|^2 \right\}.
\end{equation}
Find $a_{k+1}$ from equation $\frac{a_{k+1}^2}{A_{k} + a_{k+1}} = \frac{1}{L}$.  \\
Option b), 
\begin{equation}
\label{eq:xkp1_opt_b}
h_{k+1} = \arg\min_{h \geqslant 0} f\left(y^k - h(\nabla f(y^k))^{\#}\right), \quad x^{k+1} = y^  k- h_{k+1}(\nabla f(y^k))^{\#}.
\end{equation}
Find $a_{k+1}$ from equation $f(y^k) - \frac{a_{k+1}^2}{2(A_{k} + a_{k+1})} \|\nabla f(y^k) \|_*^2 = f(x^{k+1})$.
\STATE  Set $A_{k+1} = A_{k} + a_{k+1}$.  
\STATE  Set $\psi_{k+1}(x) =  \psi_{k}(x) + a_{k+1}\{f(y^k) + \langle \nabla f(y^k), x - y^k \rangle\}$.
\STATE $v^{k+1} = \arg\min_{x \in E} \psi_{k+1}(x)$
\STATE $k = k + 1$
\ENDFOR
\end{algorithmic}
\end{algorithm}

Here and for all the methods described further we assume that if the equation for $a_{k+1}$ in step 4 admits multiple solutions, then the greater one is chosen.

Before we move to the theoretical results of this section, let us make some remarks. The main new element of the proposed method is in line 3. Unlike known methods \cite{nesterov2004introductory,nesterov2005smooth,allen2014linear}, which use fixed $\beta_k = \frac{k}{k+2}$, we use minimization over the interval $\beta \in [0,1]$. The choice of the fixed stepsize is motivated by the theoretical convergence analysis. Our goal is to choose best possible stepsize with the same convergence rate guarantees.   
Most of the results described further remain the same if the search over the unit interval $[0,1]$ in line 3 is changed to line-search over any subset of $\mathbb{R}$ containing said interval, for instance, the whole real line $\mathbb{R}$. \iffalse Moreover, we separately investigate with what precision we should solve all auxiliary 1D-optimization problems. It is obvious a priory that line search at line 4 not necessarily to do exactly, otherwise we should deal with with the line search in line 3. But even in non-convex case we just really required a small norm of the gradient (one can show that there is no need to find the exact global minimum in line search in line 3) that can be done efficiently. \fi 

Theoretical analysis of Algorithm \ref{AGMsDR} is based on the following theorem. We underline that the convexity of the objective $f$ is not required.
\begin{theorem} 
\label{Main}
After $k$ steps of Algorithm \ref{AGMsDR} for problem \eqref{COnvProb} it holds that
\begin{equation}
    \label{eq:main_recurrence}
    A_{k}f(x^{k}) \leqslant \min_{x \in \mathbb{R}^n} \psi_{k}(x) = \psi_{k}(v^{k}).
\end{equation}
Moreover, $A_k \geqslant\frac{k^2}{4L}$.
\end{theorem}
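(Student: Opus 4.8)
The plan is to prove the two claims by induction on $k$, using the estimating-sequence methodology that the algorithm is clearly built around.

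\medskip

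\textbf{Step 1: Base case and the structure of $\psi_k$.} At $k=0$ we have $A_0=0$, so $A_0 f(x^0) = 0$ and $\psi_0(x) = V(x,x^0) \geqslant 0$ with $\min_x \psi_0(x) = \psi_0(v^0) = 0$ (recall $v^0 = x^0$ and $V(x^0,x^0)=0$), so \eqref{eq:main_recurrence} holds with equality. I would also record that, by construction, $\psi_k$ is the sum of the $1$-strongly convex function $V(\cdot,x^0)$ and affine terms, hence $\psi_k$ is $1$-strongly convex and $v^k = \arg\min_x \psi_k(x)$ is well defined; moreover for any $x$,
\[
\psi_k(x) \geqslant \psi_k(v^k) + \tfrac12\|x - v^k\|^2 .
\]

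\textbf{Step 2: Induction step for \eqref{eq:main_recurrence}.} Assume $A_k f(x^k) \leqslant \psi_k(v^k)$. Using the definition of $\psi_{k+1}$, strong convexity of $\psi_k$ around $v^k$, and then minimizing the resulting quadratic lower bound, I would derive
\[
\psi_{k+1}(v^{k+1}) \;\geqslant\; \min_{x}\Big\{ \psi_k(v^k) + \tfrac12\|x-v^k\|^2 + a_{k+1}\big(f(y^k) + \la \nabla f(y^k), x - y^k\ra\big)\Big\}.
\]
Bounding $\psi_k(v^k) \geqslant A_k f(x^k) \geqslant A_k\big(f(y^k) + \la \nabla f(y^k), x^k - y^k\ra\big)$ by the gradient inequality applied at $y^k$ (valid since $y^k$ is a point where $\nabla f$ is evaluated — note convexity of $f$ is \emph{not} used here because the inequality $f(x^k)\ge f(y^k)+\langle\nabla f(y^k),x^k-y^k\rangle$ is instead obtained from the line-search optimality at line 3, which guarantees $f(y^k)\le f(x^k)$ and $\langle \nabla f(y^k), x^k - y^k\rangle = 0$ when $\beta_k\in(0,1)$; the boundary cases $\beta_k\in\{0,1\}$ must be checked separately), I would collect the linear-in-$x$ terms with coefficient $A_k + a_{k+1} = A_{k+1}$ multiplying $y^k$, complete the square, and obtain
\[
\psi_{k+1}(v^{k+1}) \;\geqslant\; A_{k+1} f(y^k) \;-\; \frac{a_{k+1}^2}{2(A_k+a_{k+1})}\|\nabla f(y^k)\|_*^2 .
\]
The defining equation for $a_{k+1}$ in Option b) makes the right-hand side exactly $A_{k+1} f(x^{k+1})$; for Option a), the same identity holds because the $L$-smoothness descent lemma applied to \eqref{eq:xkp1_opt_a} gives $f(x^{k+1}) \le f(y^k) - \tfrac{1}{2L}\|\nabla f(y^k)\|_*^2$ and the choice $a_{k+1}^2/A_{k+1} = 1/L$ matches the coefficient. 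This closes the induction.

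\medskip

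\textbf{Step 3: The growth estimate $A_k \geqslant k^2/(4L)$.} Here I expect the main work. In Option a) the relation $a_{k+1}^2 = A_{k+1}/L$ with $A_{k+1} = A_k + a_{k+1}$ gives the standard recursion, from which $\sqrt{A_{k+1}} - \sqrt{A_k} \geqslant \tfrac{1}{2\sqrt L}$ and hence $A_k \geqslant k^2/(4L)$ by telescoping. The subtlety — and the real obstacle — is Option b), where $a_{k+1}$ is defined implicitly through $f$ and there is no explicit $1/L$; I would need to show that the line-search step at \eqref{eq:xkp1_opt_b} guarantees a decrease at least as large as the $L$-smoothness guarantee, i.e. $f(x^{k+1}) \leqslant f(y^k) - \tfrac{1}{2L}\|\nabla f(y^k)\|_*^2$ (this follows since the optimal $h_{k+1}$ does at least as well as $h = 1/L$, using \eqref{L}), and consequently that the $a_{k+1}$ solving the Option b) equation is at least as large as the one solving the Option a) equation. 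Comparing the two equations as functions of $a_{k+1}$ and using monotonicity then transfers the bound $a_{k+1}^2 \geqslant A_{k+1}/L$ to Option b), and the same telescoping argument finishes the proof. I would also flag that one must handle the degenerate case $\nabla f(y^k) = 0$ (the method has reached a stationary point, and then one can take $a_{k+1}$ arbitrarily large, so the bound is vacuously maintained or the algorithm terminates).
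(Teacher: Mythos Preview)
Your overall plan matches the paper's estimating-sequence proof, and Step~3 (the growth of $A_k$) is handled exactly as the paper does. But Step~2 contains a real gap and a related algebraic slip.

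First, the inequality you want to extract from the line search,
\[
f(x^k)\;\geqslant\; f(y^k)+\langle \nabla f(y^k),\,x^k-y^k\rangle,
\]
does \emph{not} follow from optimality in \eqref{eq:beta_k_y_k_def} in the boundary case $\beta_k=0$. There $y^k=v^k$, the first-order condition only gives $\langle\nabla f(y^k),x^k-v^k\rangle\geqslant 0$, and line search gives $f(x^k)\geqslant f(y^k)$; but there is no reason why the gap $f(x^k)-f(y^k)$ should dominate the (possibly large, positive) directional derivative $\langle\nabla f(y^k),x^k-y^k\rangle$ without convexity. A one-dimensional counterexample is $g(\beta)=\sin(\pi\beta)+\tfrac{1}{10}\beta$ on $[0,1]$: the minimum is at $\beta=0$, yet $g(1)=0.1<g(0)+g'(0)=\pi+0.1$. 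Since the theorem is stated (and used later) for possibly non-convex $f$, this step fails as written.

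The paper's fix is simpler than what you attempt: use only $f(x^k)\geqslant f(y^k)$ to replace $A_kf(x^k)$ by $A_kf(y^k)$, and then split $\langle\nabla f(y^k),x-y^k\rangle=\langle\nabla f(y^k),x-v^k\rangle+\langle\nabla f(y^k),v^k-y^k\rangle$. The second inner product is $\geqslant 0$ in \emph{all three} cases $\beta_k\in\{0\}\cup(0,1)\cup\{1\}$ (this is the case analysis you allude to, applied to the right quantity). Minimising the remaining quadratic in $x$ then yields
\[
\psi_{k+1}(v^{k+1})\;\geqslant\; A_{k+1}f(y^k)\;-\;\frac{a_{k+1}^2}{2}\,\|\nabla f(y^k)\|_*^2,
\]
with coefficient $a_{k+1}^2/2$, not $a_{k+1}^2/(2A_{k+1})$ as in your display. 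With this correction the right-hand side equals $A_{k+1}f(x^{k+1})$ exactly by the Option~b) equation (multiply that equation through by $A_{k+1}$), and by the descent lemma for Option~a). Everything else in your plan is fine.
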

\begin{proof} 
Denote
$$l_k(x) = \sum_{i=0}^k a_{i+1}\{f(y^i) + \langle \nabla f(y^i), x - y^i \rangle\}.$$
Then 
$$\psi_{k+1}(x) = l_{k}(x) + \psi_0(x) = \psi_{k}(x) + a_{k+1}\{f(y^k) + \langle \nabla f(y^k), x - y^k \rangle\}.$$
First, we prove inequality \eqref{eq:main_recurrence} by induction over $k$. For $k=0$, the inequality holds. Assume that 
$$A_{k}f(x^{k}) \leqslant \min_{x \in \mathbb{R}^n} \psi_{k}(x) = \psi_{k}(v^{k}).$$
Then 
$$\psi_{k+1}(v^{k+1}) = \min_{x \in \mathbb{R}^n} \left\{ \psi_{k}(x) + a_{k+1}\{f(y^k) + \langle \nabla f(y^k), x - y^k \rangle\} \right\}  $$
$$  \geqslant \min_{x \in \mathbb{R}^n} \left\{ \psi_{k}(v^k) +\frac{1}{2}\|x-v^k\|^2 + a_{k+1}\{f(y^k) + \langle \nabla f(y^k), x - y^k \rangle\} \right\} $$
$$\geqslant \min_{x \in \mathbb{R}^n} \left\{ A_{k}f(x^k) +\frac{1}{2}\|x-v^k\|^2 + a_{k+1}\{f(y^k) + \langle \nabla f(y^k), x - y^k \rangle\} \right\}.$$
Here we used that $\psi_{k}$ is a strongly convex function with minimum at $v^k$.

By the definition of $\beta_k$ and $y_k$ in \eqref{eq:beta_k_y_k_def}, we have 
$f(y^k) \leqslant f(x^k)$. By the optimality conditions in \eqref{eq:beta_k_y_k_def}, either
\begin{enumerate}
    \item $\beta_k = 0$, $\langle \nabla f(y^k),x^k - v^k \rangle \geqslant 0$, $y^k = v^k$;
    \item $\beta_k \in (0,1)$ and $\langle \nabla f(y^k),x^k - v^k \rangle = 0$, $y^k = v^k + \beta_k (x^k - v^k)$;
    \item $\beta_k = 1$ and $\langle \nabla f(y^k),x^k - v^k \rangle \leqslant 0$, $y^k = x^k$ .
\end{enumerate}
In all three cases,  $\langle \nabla f(y^k), v^k - y^k \rangle \geqslant 0$. Thus,
%and \sg{$\beta_k = \arg\min _{\beta \in \left[0, 1 \right]} f\left(v^k + \beta (x^k - v^k)\right)$; $y^k = v^k + \beta_k (x^k - v^k)$. Then $f(y^k)\leqslant f(x^k)$, and either $\langle \nabla f(y^k), v^k - y^k \rangle=0$ from optimality conditions, or $\beta_k=0$ and $\langle \nabla f(y^k), v^k - y^k \rangle\geqslant0$ from optimality conditions, or $\beta_k=1$ and $v^k-y^k=0$}. From these two inequalities and the fact that $A_{k+1} = A_{k} +a_{k+1}$ one can obtain
$$\psi_{k+1}(v^{k+1}) \geqslant \min_{x \in \mathbb{R}^n} \left\{ A_{k+1}f(y^k) + a_{k+1}\langle \nabla f(y^k), x - v^k \rangle +\frac{1}{2}\|x-v^k\|^2  \right\}  $$
$$\geqslant A_{k+1}f(y^k) - \frac{a_{k+1}^2}{2}\|\nabla f(y^k)\|_*^2,$$
where we used that for any $g \in E^*$, $s \in E$, $\zeta \geqslant 0$, it holds that $\la g,s \ra + \frac{\zeta}{2}\|s\|^2 \geqslant - \frac{1}{2\zeta}\|g\|_*^2$. 
Our next goal is to show that 
\begin{equation} 
\label{eq:ind_fin_step}
A_{k+1}f(y^k) - \frac{a_{k+1}^2}{2}\|\nabla f(y^k)\|_*^2 \geqslant A_{k+1}f(x^{k+1}),
\end{equation}
which proves the induction step.

For option a) in the step 4, using the $L$-smoothness of $f$ and minimizing the r.h.s. of \eqref{eq:xkp1_opt_a}, we have 
$$
f(x^{k+1}) \leqslant f(y^k) + \langle \nabla f(y^k), x^{k+1} - y^k \rangle + \frac{L}{2} \|x^{k+1} - y^k \|^2 = \min_{x \in E} \left(f(y^k) + \langle \nabla f(y^k), x - y^k \rangle + \frac{L}{2} \|x - y^k \|^2 \right)
$$
$$
= f(y^k) - \frac{1}{2L} \|\nabla f(y^k)\|_*^2.
$$
Since, for this option, $\frac{a_{k+1}^2}{A_{k+1}} = \frac{1}{L}$, inequality \eqref{eq:ind_fin_step} holds.
For option b) in the step 4, \eqref{eq:ind_fin_step} holds by the choice of $a_{k+1}$ from the equation
\begin{equation}
    \label{eq:Th:Main_proof_1}
    f(y^k) - \frac{a_{k+1}^2}{2A_{k+1}} \|\nabla f(y^k) \|_*^2 = f(x^{k+1}).
\end{equation}
It remains to show that this equation has a solution $a_{k+1} > 0$.
By the $L$-smoothness of $f$, we have 
\begin{align}
    \label{eq:Th:Main_proof_2}
f(x^{k+1}) &= \min_{h \geqslant 0} f\left(y^k - h(\nabla f(y^k))^{\#}\right) \leqslant \min_{h \geqslant 0} \left( f(y^k) -h \langle \nabla f(y^k), (\nabla f(y^k))^{\#} \rangle + \frac{Lh^2}{2} \|(\nabla f(y^k))^{\#} \|^2 \right) \notag \\
& = f(y^k) - \frac{1}{2L} \|\nabla f(y^k)\|_*^2,
\end{align}
where we used that $\langle \nabla f(y^k), (\nabla f(y^k))^{\#} \rangle = \|\nabla f(y^k) \|_*^2$ and $\|(\nabla f(y^k))^{\#} \|^2 = 1$ by definition of the vector $(\nabla f(y^k))^{\#}$. Since $A_{k+1}=A_k + a_{k+1}$, we can rewrite the equation \eqref{eq:Th:Main_proof_1} as 
\[
\frac{a_{k+1}^2}{2} \|\nabla f(y^k) \|_*^2 + a_{k+1} (f(x^{k+1}) - f(y^k)) +  A_k(f(x^{k+1}) - f(y^k)) = 0.
\]
Since, by \eqref{eq:Th:Main_proof_2}, $f(x^{k+1}) - f(y^k) < 0$ (otherwise $\|\nabla f(y^k)\|_* = 0$ and $y_k$ is a solution to the problem \eqref{COnvProb}), 
\[
a_{k+1}= \frac{f(y^k) - f(x^{k+1}) + \sqrt{(f(y^k) - f(x^{k+1}))^2-2A_k(f(x^{k+1})-f(y^k))\|\nabla f(y^k) \|_*^2}}{\|\nabla f(y^k) \|_*^2} > 0.
\]

Let us estimate the rate of the growth for $A_k$. If in the step 4 optiona a) is used, $\frac{a_{k+1}^2}{A_{k+1}} = \frac{1}{L}$. For the option b), using \eqref{eq:Th:Main_proof_1} and \eqref{eq:Th:Main_proof_2}, we have $\frac{a_{k+1}^2}{A_{k+1}} \geqslant \frac{1}{L}$. Thus, for both options, $\frac{a_{k+1}^2}{A_{k}+a_{k+1}} = \frac{a_{k+1}^2}{A_{k+1}} \geqslant \frac{1}{L}$.
Since $A_1 = a_1 \geqslant \frac{1}{L}$, we prove by induction that $\alpha_k \geqslant \frac{k}{2L}$ and $A_{k} \geqslant \frac{(k+1)^2}{4L} \geqslant \frac{k^2}{4L}$

Indeed,
	\begin{align}
	\alpha_{k+1} & \geqslant \frac{1 + \sqrt{1 + 4A_k L}}{2L} = \frac{1}{2L} + \sqrt{\frac{1}{4L^2} + \frac{A_{k}}{L}} \geqslant 
	\frac{1}{2L} + \sqrt{\frac{A_{k}}{L}} \notag \\
	&\geqslant 
	\frac{1}{2L} + \frac{1}{\sqrt{L}}\frac{k+1}{2\sqrt{L}} =
	\frac{k+2}{2L}. \notag
	\end{align}
	Hence,
	\begin{equation*}
	A_{k+1} = A_k + \alpha_{k+1} \geqslant \frac{(k+1)^2}{4L} + \frac{k+2}{2L} \geqslant \frac{(k+2)^2}{4L}.
	\end{equation*}
\end{proof}

Next result is simple and standard for gradient methods, but we provide it for the sake of completeness of the paper.

\begin{theorem}
\label{Th:GradDecayNonConv}
Let function $f$ be $L$-smooth and Algorithm \ref{AGMsDR} be run for $N$ steps. Then
$$
\min_{k=0,...,N}\| \nabla f(y^k)\|_*^2 \leqslant \frac{2L(f(x^0) - f(x_*))}{N}.
$$
\end{theorem}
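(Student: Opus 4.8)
The plan is to derive a one-step decrease of the objective along the iterates $\{x^k\}$ and then telescope. The first ingredient is already contained in the proof of Theorem~\ref{Main}: for both options in step~4 it was shown that
\[
f(x^{k+1}) \leqslant f(y^k) - \frac{1}{2L}\|\nabla f(y^k)\|_*^2,
\]
(for option a) this is the displayed consequence of $L$-smoothness and minimization of the right-hand side of \eqref{eq:xkp1_opt_a}, and for option b) it is precisely \eqref{eq:Th:Main_proof_2}). The second ingredient is that the line search defining $\beta_k$ and $y^k$ in \eqref{eq:beta_k_y_k_def} includes $\beta=1$, i.e. the point $x^k$, in its feasible set, so $f(y^k) \leqslant f(x^k)$. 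Combining the two gives the monotone descent estimate $f(x^{k+1}) \leqslant f(x^k) - \tfrac{1}{2L}\|\nabla f(y^k)\|_*^2$.

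Next I would sum this inequality over $k = 0, \dots, N-1$, so that the function values telescope:
\[
\frac{1}{2L}\sum_{k=0}^{N-1}\|\nabla f(y^k)\|_*^2 \leqslant f(x^0) - f(x^N) \leqslant f(x^0) - f(x_*),
\]
where the last step uses that $x_*$ is a minimizer of $f$. Since the minimum of the $N$ nonnegative terms $\|\nabla f(y^k)\|_*^2$, $k=0,\dots,N-1$, is no larger than their average, we get $\min_{k=0,\dots,N-1}\|\nabla f(y^k)\|_*^2 \leqslant \frac{2L(f(x^0)-f(x_*))}{N}$, which (a fortiori, by enlarging the index set) yields the claimed bound.

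There is no real obstacle here; as the authors note, the argument is standard for descent-type methods. The only points requiring a little care are (i) invoking the correct inequalities established inside the proof of Theorem~\ref{Main} for each of the two options of step~4, and (ii) the observation that the coupling step~3 is itself a line search containing $x^k$, which is what makes $\{f(x^k)\}$ monotone and lets the telescoping go through.
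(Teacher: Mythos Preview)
Your proof is correct and follows essentially the same argument as the paper: establish the one-step descent $f(x^{k+1})\leqslant f(x^k)-\tfrac{1}{2L}\|\nabla f(y^k)\|_*^2$ by combining the descent inequality from the proof of Theorem~\ref{Main} with $f(y^k)\leqslant f(x^k)$, then telescope and bound the minimum by the average. The only cosmetic difference is that the paper sums over $k=0,\dots,N$ directly, whereas you sum over $k=0,\dots,N-1$ and then enlarge the index set; both are fine.
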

\begin{proof}
We have that \begin{equation}
f(x^{k+1})\leqslant f(y^k)-\frac{1}{2L}\|\nabla f(y^k)\|^2_*\leqslant f(x^k)-\frac{1}{2L}\|\nabla f(y^k)\|^2_*. \label{grad_descent_guarantee}
\end{equation}

Summing this up for $k=0,\ldots, N$, we obtain 

\[f(x^0)-f(x_*)\geqslant f(x^{0})-f(x^{N+1})\geqslant \frac{N}{2L}\min_{k=0,\ldots, N}\|\nabla f(y^k)\|^2_2.\]
Consequently, we may guarantee \[\min_{k=0,\ldots, N}\|\nabla f(y^k)\|^2_2\leqslant \frac{2L(f(x^0)-f(x_*))}{N}.\]
\end{proof}

%\pd{TODO: It seems that we can extend this setting to the class of weakly convex functions. See \url{https://arxiv.org/pdf/1712.06038.pdf}.}

Before we move to the main results, we define $\gamma$-weakly-quasi-convex functions, which are unimodal, but generally non-convex.
We say that $f(x)$ is $\gamma$-weakly-quasi-convex with $\gamma \in (0,1]$ if for all $x \in \mathbb{R}^n$ 
$$\gamma (f(x) - f(x_*))\leqslant \langle \nabla f(x), x - x_* \rangle.$$
Note that convex functions are $1$-weakly-quasi-convex. The converse is generally not true.

\begin{lemma}
\label{WQC_conv}
Let function $f$ be $\gamma$-weakly-quasi-convex and Algorithm \ref{AGMsDR} be run for $N$ steps. Then 
$$A_{k}(f(x^{k}) - f(x_*)) \leqslant (1- \gamma)A_{k}(f(x^0) - f(x_*))+V(x_\ast,x^0).$$
\end{lemma}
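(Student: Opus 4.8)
The plan is to mimic the induction from Theorem \ref{Main}, but to track the extra defect coming from weak-quasi-convexity rather than full convexity. Recall that Theorem \ref{Main} already establishes $A_kf(x^k)\leqslant\psi_k(v^k)=\min_x\psi_k(x)$ for any (even nonconvex) $f$. So the task reduces to producing a suitable \emph{lower} bound on $\psi_k(v^k)$ in terms of $f(x_\ast)$. When $f$ is convex, one has $f(y^i)+\langle\nabla f(y^i),x_\ast-y^i\rangle\leqslant f(x_\ast)$ for every $i$, hence $\psi_k(x_\ast)\leqslant A_kf(x_\ast)+V(x_\ast,x^0)$, and combining with the upper bound gives the usual $O(1/k^2)$ rate. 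For $\gamma$-weakly-quasi-convex $f$ the linearization at $y^i$ need not lie below $f(x_\ast)$, but the defining inequality $\gamma(f(y^i)-f(x_\ast))\leqslant\langle\nabla f(y^i),y^i-x_\ast\rangle$ gives
\[
f(y^i)+\langle\nabla f(y^i),x_\ast-y^i\rangle\leqslant f(y^i)-\gamma(f(y^i)-f(x_\ast))=(1-\gamma)f(y^i)+\gamma f(x_\ast).
\]

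First I would evaluate $\psi_k$ at $x_\ast$ using this bound term by term, obtaining
\[
\psi_k(x_\ast)=V(x_\ast,x^0)+\sum_{i=0}^{k-1}a_{i+1}\bigl\{f(y^i)+\langle\nabla f(y^i),x_\ast-y^i\rangle\bigr\}\leqslant V(x_\ast,x^0)+(1-\gamma)\sum_{i=0}^{k-1}a_{i+1}f(y^i)+\gamma A_kf(x_\ast).
\]
Next I would control the sum $\sum_{i=0}^{k-1}a_{i+1}f(y^i)$. From the monotonicity established inside the proof of Theorem \ref{Main} — namely $f(y^i)\leqslant f(x^i)\leqslant f(x^0)$, since $f(x^{i+1})\leqslant f(y^i)\leqslant f(x^i)$ along the iterates (see \eqref{grad_descent_guarantee} and the definition of $\beta_i,y^i$) — we get $\sum_{i=0}^{k-1}a_{i+1}f(y^i)\leqslant A_kf(x^0)$. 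Substituting and using $\psi_k(v^k)=\min_x\psi_k(x)\leqslant\psi_k(x_\ast)$ together with the upper bound $A_kf(x^k)\leqslant\psi_k(v^k)$ from Theorem \ref{Main} yields
\[
A_kf(x^k)\leqslant V(x_\ast,x^0)+(1-\gamma)A_kf(x^0)+\gamma A_kf(x_\ast),
\]
and subtracting $\gamma A_kf(x_\ast)$ from both sides and rewriting $f(x^k)-\gamma f(x_\ast)-(1-\gamma)f(x^0) = (f(x^k)-f(x_\ast)) - (1-\gamma)(f(x^0)-f(x_\ast))$ gives exactly the claimed inequality
\[
A_k(f(x^k)-f(x_\ast))\leqslant(1-\gamma)A_k(f(x^0)-f(x_\ast))+V(x_\ast,x^0).
\]

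The only genuinely delicate point is justifying the monotonicity $f(x^{i+1})\leqslant f(y^i)\leqslant f(x^i)$: the first inequality is the gradient-step guarantee (true for both option a) via $L$-smoothness and option b) via exact line search, both recorded in the proof of Theorem \ref{Main}), and the second follows from the case analysis on $\beta_i$ in that same proof, since $y^i$ is the minimizer of $f$ on the segment $[v^i,x^i]$ and $\beta_i=1$ is admissible, so $f(y^i)\leqslant f(x^i)$. Everything else is a direct reuse of Theorem \ref{Main} plus the one-line weak-quasi-convexity estimate, so I do not anticipate any further obstacle.
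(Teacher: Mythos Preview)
Your proposal is correct and follows essentially the same route as the paper's proof: both evaluate $\psi_k$ at $x_\ast$, apply the weak-quasi-convexity inequality termwise to get $f(y^i)+\langle\nabla f(y^i),x_\ast-y^i\rangle\leqslant(1-\gamma)f(y^i)+\gamma f(x_\ast)$, use the monotonicity $f(y^i)\leqslant f(x^0)$, and then combine with the bound $A_kf(x^k)\leqslant\psi_k(v^k)\leqslant\psi_k(x_\ast)$ from Theorem~\ref{Main}. Your write-up is in fact slightly more explicit than the paper's about why the monotonicity $f(y^i)\leqslant f(x^0)$ holds.
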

\begin{proof}
According to the definition of $\gamma$-weak-quasi-convexity 
$$l_k(x_*) = \sum_{i=0}^k a_{i+1}\{f(y^i) + \langle \nabla f(y^i), x_* - y^i \rangle\}\leqslant
$$
$$
\leqslant \sum_{i=0}^k a_{i+1}\{(1 - \gamma)f(y^i) + \gamma f(x_*)\}.$$
By \eqref{grad_descent_guarantee} and \eqref{eq:beta_k_y_k_def} we have $f(y^i)\leqslant f(x^i)\leqslant f(x^0)$, so

 \[l_k(x_*) \leqslant \sum_{i=0}^k a_{i+1}\{(1 - \gamma)f(x^0) + \gamma f(x_*)\}.\]
From this inequality and \textbf{Theorem} \ref{Main} we have
$$A_{k} f(x_{k}) \leqslant \min_{x \in \mathbb{R}^n} \psi_{k}(x) \leqslant \psi_{k}(x_*) = l_{k-1}(x_*) + V(x_*,x^0) \leqslant
$$
$$
\leqslant \sum_{i=0}^{k-1} a_{i+1}\{(1 - \gamma)f(x^0) + \gamma f(x_*)\} + V(x_*,x^0).$$
From here, since $A_{k} = \sum\limits_{i=0}^{k-1} a_{i+1}$, by rearanging the terms we obtain the statement of the theorem:
$$A_{k}(f(x^{k}) - f(x_*)) \leqslant (1- \gamma)A_{k}(f(x^0) - f(x_*))+ V(x_*,x^0).$$
\end{proof}

\begin{theorem}
\label{Th:RateConv}
Let function $f$ be $1$-weakly-quasi-convex and $L$-smooth and Algorithm \ref{AGMsDR} be run for $N$ steps. Then 
\begin{equation}\label{grad}
\min_{k=[N/2],...,N}\| \nabla f(y^k)\|_*^2 \leqslant \frac{64L^2V(x_*,x^0)}{N^{3}},
\end{equation}
$$
f(x^N) - f(x_*) \leqslant \frac{4LV(x_*,x^0)}{N^2}.
$$

\end{theorem}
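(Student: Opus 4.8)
The plan is to derive both bounds from the master inequality of Theorem~\ref{Main} together with the per-step descent \eqref{grad_descent_guarantee}, so the two conclusions are really two bookkeeping consequences of the same accumulated decrease. First I would establish the function-value rate: applying Theorem~\ref{Main} with $x = x_*$ gives $A_N f(x^N) \leqslant \psi_N(v^N) \leqslant \psi_N(x_*)$, and since $f$ is convex ($1$-weakly-quasi-convex), each linearization satisfies $f(y^i) + \langle \nabla f(y^i), x_* - y^i\rangle \leqslant f(x_*)$, hence $\psi_N(x_*) \leqslant A_N f(x_*) + V(x_*,x^0)$ (this is exactly Lemma~\ref{WQC_conv} with $\gamma = 1$). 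Combining and dividing by $A_N$ yields $f(x^N) - f(x_*) \leqslant V(x_*,x^0)/A_N$, and then invoking $A_N \geqslant N^2/(4L)$ from Theorem~\ref{Main} gives the second displayed bound.

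For the gradient bound I would exploit that the descent \eqref{grad_descent_guarantee} holds at \emph{every} step, so summing it over $k = [N/2], \dots, N$ telescopes to
\[
\frac{1}{2L}\sum_{k=[N/2]}^{N} \|\nabla f(y^k)\|_*^2 \leqslant f(x^{[N/2]}) - f(x^{N+1}) \leqslant f(x^{[N/2]}) - f(x_*).
\]
The left side is at least $\frac{(N - [N/2] + 1)}{2L}\min_{k=[N/2],\dots,N}\|\nabla f(y^k)\|_*^2$, and the number of summands is $\Theta(N)$; precisely, $N - [N/2] + 1 \geqslant N/2$. For the right side I use the already-proved function-value rate at the earlier index: $f(x^{[N/2]}) - f(x_*) \leqslant V(x_*,x^0)/A_{[N/2]} \leqslant 4L V(x_*,x^0)/[N/2]^2 \leqslant 16 L V(x_*,x^0)/N^2$ (using $[N/2] \geqslant N/4$, say, or more carefully $[N/2]^2 \geqslant N^2/16$ for $N$ not too small — one may need to absorb small-$N$ cases into the constant, which is why the stated constant $64$ is generous). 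Putting the pieces together:
\[
\min_{k=[N/2],\dots,N} \|\nabla f(y^k)\|_*^2 \leqslant \frac{2L}{N/2}\cdot \frac{16 L V(x_*,x^0)}{N^2} = \frac{64 L^2 V(x_*,x^0)}{N^3},
\]
which is \eqref{grad}.

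I expect the only real subtlety to be tracking the integer-part constants so that everything fits under the clean constant $64$ (and confirming the argument is vacuous or trivially true for the few smallest $N$), rather than anything structural: both halves are immediate once one notices that Algorithm~\ref{AGMsDR} simultaneously enjoys the accelerated $O(1/A_N)$ estimate on $f(x^k)-f(x_*)$ \emph{and} a plain gradient-descent decrease at each iterate $y^k$ via \eqref{grad_descent_guarantee}. The combination — running the accelerated estimate up to the midpoint and then the telescoping descent over the second half — is the standard device for converting an $O(1/N^2)$ value rate into an $O(1/N^3)$ rate on the squared gradient norm, so no genuinely new estimate is needed beyond what the excerpt already supplies.
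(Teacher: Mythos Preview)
Your proposal is correct and follows essentially the same route as the paper: apply Lemma~\ref{WQC_conv} with $\gamma=1$ together with the bound $A_N\geqslant N^2/(4L)$ from Theorem~\ref{Main} for the function-value estimate, then telescope \eqref{grad_descent_guarantee} over $k=[N/2],\dots,N$ and plug in the value estimate at the midpoint to obtain the $O(1/N^3)$ gradient bound. You are in fact slightly more careful than the paper about the integer-part constants; the paper silently uses $[N/2]\geqslant N/2$ in its final line, which strictly holds only for even $N$, whereas your more conservative bookkeeping covers all $N$.
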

\begin{proof}

Applying \textbf{Lemma~}\ref{WQC_conv} with $\gamma=1$, we get

\[f(x^{N}) - f(x_*)\leqslant \frac{V(x_\ast,x^0)}{A_{N}}.\]
Using the lower bound on $A_N$ established in \textbf{Theorem~}\ref{Main}  we obtain that for a convex (or 1-weakly-quasi-convex) objective \[f(x^N) - f(x_*)\leqslant \frac{4LV(x_*,x^0)}{N^2}.\]

Summing up \eqref{grad_descent_guarantee} for $k=\ceil{N/2},\ldots, N$, we obtain

\[f(x^{\left[N/2\right]})-f(x_*)\geqslant f(x^{\left[N/2\right]})-f(x^{N+1})\geqslant \sum\limits_{k=\left[N/2\right]}^N\frac{\|\nabla f(y^k)\|^2_2}{2L}\geqslant \left[N/2\right]\min_{k=\left[N/2\right],\ldots, N}\frac{\|\nabla f(y^k)\|^2_2}{2L}.\]

Finally, we have \[\min_{k=\left[N/2\right],\ldots, N}\|\nabla f(y^k)\|^2_2\leqslant \frac{4L}{N}(f(x^{\left[N/2\right]})-f(x_*))\leqslant \frac{64L^2V(x_*,x^0)}{N^{3}}.\]

\end{proof}

\begin{remark}
Recently in \cite{kim2018generalizing} a special variant of accelerated gradient descent that converges at the rate
\begin{equation}
\label{gradBest}
\|\nabla f(x^N)\|_2^2 = O\left(\frac{L(f(x^0) - f(x_*))}{N^2}\right).
\end{equation} was proposed.

This result seems to be weaker than \eqref{grad}, but actually from  \eqref{gradBest} one can obtain a much stronger result. Indeed, one can perform $N$  iterations of common fast gradient descent and obtain 
$$f(x^N) - f(x_*) = O\left(\frac{LR^2}{N^2}\right).$$
Then one can put $x^0:= x^N$ and perform $N$ iterations of the method from \cite{kim2018generalizing}. Totally, we obtain 
$$\|\nabla f(x^N)\|_2^2 = O\left(\frac{L^2R^2}{N^4}\right).$$
This bound and the bound \eqref{gradBest} are unimprovable, see \cite{nemirovsky1992information,nesterov2012make}.
\end{remark}

\begin{remark}
Note that our method does not require the knowledge about the convexity of the objective function and automatically works either with rate given by Theorem \ref{Th:GradDecayNonConv} or by Theorem \ref{Th:RateConv}.
\end{remark}

\subsubsection{Online stopping criterion}
%\pd{TODO: rewrite for the simple case of minimization of smooth functions.}

If the objective is smooth and convex, this method admits an efficient stopping criterion.

By rewriting the statement of \textbf{Theorem~\ref{Main}} we see that \[f(x^k)\leqslant\frac{1}{A_k}\psi_k(v^k)=\frac{1}{A_k}\min_{x\in\mathbb{R}^n}\left[\frac{1}{2}\|x^0-x\|^2_2+\sum_{i=0}^{k-1} a_{i+1}\left\{f(y^i) + \langle \nabla f(y^i), x - y^i \rangle\right\}\right]\]

As it was before, \[l^{k-1}(x)=\sum_{i=0}^{k-1} a_{i+1}\left\{f(y^i) + \langle \nabla f(y^i), x - y^i \rangle+\frac{\mu}{2}\|x-y^i\|^2_2\right\}\]Denote $R=\|x^0-x_*\|_2$   and \[\hat{f}^k=\min_{x:\ \|x-x_0\|\leqslant R} \frac{1}{A_k}l^{k-1}(x).\]

The constraint may be rewritten equivalently  as $\frac{1}{2}\|x-x_0\|^2\leqslant \frac{R^2}{2}$. By strong duality we see that \begin{align*}
\hat{f}^k&=\min_{x\in \mathbb{R}^n}\max_{\lambda\geqslant 0}\left\lbrace\frac{1}{A_k} l^{k-1}(x)+\lambda\left(\frac{1}{2}\|x_0-x\|^2-\frac{R^2}{2}\right)\right\rbrace\\&=\max_{\lambda\geqslant 0} \min_{x\in \mathbb{R}^n}\left\lbrace\frac{1}{A_k} l^{k-1}(x)+\lambda\left(\frac{1}{2}\|x_0-x\|^2-\frac{R^2}{2}\right)\right\rbrace.
\end{align*}

Now we set $\lambda=\frac{1}{A_k}$ and obtain \[\hat{f}^k\geqslant\frac{1}{A_k}\psi_k(v^k)-\frac{R^2}{2A_k}. \]

Then \[f(x^k)-\hat{f}^k\leqslant\frac{R^2}{2A_k}.\] But by convexity of $f(x)$ we have that $\forall k$ $\frac{1}{A_k}l^{k-1}(x)\leqslant f(x)$, which implies that $\hat{f}^k\leqslant f(x_*).$  Finally, we have

\[f(x^k)-f(x_*)\leqslant f(x^k)-\hat{f}^k\leqslant\frac{R^2}{2A_k},\] so the condition $f(x^k)-\hat{f}^k\leqslant \eps$ is an efficient stopping criterion for the AGMsDR method.

\subsection{$\gamma$-weakly-quasi-convex objectives}
Next we describe a method for more general class of $\gamma$-weakly-quasi-convex functions. Algorithm \ref{AGMsDR-WQC} is obtained from Algorithm \ref{AGMsDR} by applying a restart technique.

%\ref{AGMsDR-WQC}.
\begin{algorithm}[!h]
\caption{Accelerated Gradient Method with Small-Dimensional Relaxation (AGMsDR)}
\label{AGMsDR-WQC}
\begin{algorithmic}[1]
%\REQUIRE $x^0 = v^0$, $L$
\ENSURE $x^k_i$
\FOR{$i \geqslant 0$}
\STATE Set $k = 0$, $A_0=0$, $x_i^0 = v_i^0$, $\psi^i_0(x) = V(x,x_i^0)$
\FOR{$k \geqslant 0$}
\STATE \begin{equation}
    \label{eq:beta_k_y_k_def_wqc}
    \beta_k = \arg\min_{\beta \in \left[0, 1 \right]} f\left(v_i^k + \beta (x_i^k - v_i^k)\right), \quad y_i^k = v_i^k + \beta_k (x_i^k - v_i^k).
\end{equation}
\STATE 
Option a), $L$ is known, 
\begin{equation}
\label{eq:xkp1_opt_a_wqc}
    x_i^{k+1} = \arg\min_{x \in E} \left\{ f(y_i^k) + \langle \nabla f(y_i^k), x - y_i^k \rangle + \frac{L}{2} \|x - y_i^k \|^2 \right\}.
\end{equation}
Find $a_{k+1}$ from equation $\frac{a_{k+1}^2}{A_{k} + a_{k+1}} = \frac{1}{L}$.  \\
Option b), 
\begin{equation}
\label{eq:xkp1_opt_b_wqc}
h_{k+1} = \arg\min_{h \geqslant 0} f\left(y_i^k - h(\nabla f(y_i^k))^{\#}\right), \quad x_i^{k+1} = y_i^ k- h_{k+1}(\nabla f(y_i^k))^{\#}.
\end{equation}
Find $a_{k+1}$ from equation $f(y_i^k) - \frac{a_{k+1}^2}{2(A_{k} + a_{k+1})} \|\nabla f(y_i^k) \|_*^2 = f(x_i^{k+1})$.
\STATE  Set $A_{k+1} = A_{k} + a_{k+1}$.  
\STATE  Set $\psi_{k+1}(x) =  \psi_{k}(x) + a_{k+1}\{f(y_i^k) + \langle \nabla f(y_i^k), x - y_i^k \rangle\}$.
\STATE $v_i^{k+1} = \arg\min_{x \in E} \psi_{k+1}(x)$
\IF{$f(x_i^k) - f(x_*) \leqslant \left(1-\gamma/2\right)\left(f(x_i^0) - f(x_*)\right)$}
\STATE \textbf{break}
\ENDIF
\STATE $k = k + 1$
\ENDFOR
\STATE Set $x_{i+1}^0=x_i^N$.
\STATE $i = i + 1$
\ENDFOR
\end{algorithmic}
\end{algorithm}

Denote by $\tilde{x}^i$ the sequence of all iterates $x^j_i$ generated by the above method
\begin{theorem}
\label{Q}
If $f(x)$ is $\gamma$-weakly-quasi-convex and $L$-smooth function, then
$$
f(\tilde{x}^N) - f(x_*) = O\left(\frac{LR^2}{\gamma^3N^2}\right),
$$
where $R=\max\limits_{x:\ f(x)\leqslant f(x_0)} \|x\|.$
\end{theorem}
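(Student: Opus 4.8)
My plan is to run the standard restart (continuation) analysis: bound the number of inner iterations needed for one restart, use that each restart cuts the residual by a fixed factor, and then trade off the number of restarts against the total work. First I would establish that the whole procedure is monotone in function value: inside one inner loop $f(x_i^{k+1}) \leqslant f(y_i^k) \leqslant f(x_i^k)$ by \eqref{grad_descent_guarantee} and \eqref{eq:beta_k_y_k_def_wqc}, and since $x_{i+1}^0 = x_i^N$ this propagates to $f(x_i^k) \leqslant f(x_0)$ for all $i,k$. Hence every restart point $x_i^0$, as well as $x_*$, lies in the sublevel set $\{x : f(x)\leqslant f(x_0)\}$, so $\|x_i^0\|\leqslant R$, $\|x_*\|\leqslant R$, and $V(x_*,x_i^0)$ is bounded uniformly by some $D = O(R^2)$ (in the Euclidean prox, $D = 2R^2$ via $V(x,z)=\tfrac12\|x-z\|^2\leqslant\tfrac12(\|x\|+\|z\|)^2$; $L$-smoothness also gives $\Delta_0 := f(x_0)-f(x_*) \leqslant 2LR^2$). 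If some $\Delta_i := f(x_i^0)-f(x_*)$ equals $0$ the optimum is already reached, so I assume $\Delta_i>0$ throughout.

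Next I would bound $N_i$, the number of inner steps of the $i$-th restart. By \textbf{Theorem~\ref{Main}}, $A_k \geqslant k^2/(4L)$ after $k$ inner steps, and \textbf{Lemma~\ref{WQC_conv}} applied to the $i$-th restart (starting point $x_i^0$) gives
\[
f(x_i^k) - f(x_*) \;\leqslant\; (1-\gamma)\Delta_i + \frac{V(x_*,x_i^0)}{A_k} \;\leqslant\; (1-\gamma)\Delta_i + \frac{4LD}{k^2}.
\]
The right-hand side falls below $(1-\gamma/2)\Delta_i$ — which is exactly the break condition of Algorithm~\ref{AGMsDR-WQC} — as soon as $4LD/k^2 \leqslant (\gamma/2)\Delta_i$, i.e. $k \geqslant \sqrt{8LD/(\gamma\Delta_i)}$; hence $N_i \leqslant 1 + \sqrt{8LD/(\gamma\Delta_i)}$. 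The break condition itself yields $\Delta_{i+1} \leqslant (1-\gamma/2)\Delta_i$, so $\Delta_i \leqslant (1-\gamma/2)^i\Delta_0$ and therefore $N_i \leqslant 1 + \sqrt{8LD/(\gamma\Delta_0)}\,(1-\gamma/2)^{-i/2}$.

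Then I would sum over restarts. To reach accuracy $\eps$ it suffices to complete $m_\eps = O\!\big(\tfrac1\gamma\log(\Delta_0/\eps)\big)$ restarts, since $-\log(1-\gamma/2)\geqslant\gamma/2$; after that $\Delta_{m_\eps}\leqslant\eps$. The total iteration count is
\[
N_\eps \;=\; \sum_{i=0}^{m_\eps-1} N_i \;\leqslant\; m_\eps + \sqrt{\tfrac{8LD}{\gamma\Delta_0}}\sum_{i=0}^{m_\eps-1}(1-\gamma/2)^{-i/2},
\]
and since $(1-\gamma/2)^{-1/2}-1\geqslant\gamma/4$ for $\gamma\in(0,1]$, the geometric sum is $O\!\big(\tfrac1\gamma(1-\gamma/2)^{-m_\eps/2}\big)=O\!\big(\tfrac1\gamma\sqrt{\Delta_0/\eps}\big)$. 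Thus $N_\eps = O\!\big(\tfrac1\gamma\log(\Delta_0/\eps)\big)+O\!\big(\sqrt{LD/(\gamma^3\eps)}\big)$; for small $\eps$ the second term dominates, so $N_\eps = O\!\big(\sqrt{LR^2/(\gamma^3\eps)}\big)$, which inverts to $f(\tilde x^{N_\eps})-f(x_*)\leqslant\eps = O\!\big(LR^2/(\gamma^3N_\eps^2)\big)$; by monotonicity of $f$ along the full sequence of iterates this gives the stated bound for $\tilde x^N$.

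The main obstacle I anticipate is not any single estimate but the bookkeeping of the sum: because $\Delta_i$ decays geometrically, the per-restart costs $N_i$ grow geometrically, and one must verify that the series is still governed (up to a factor $\asymp 1/\gamma$) by its last term, this $1/\gamma$ coming from the ratio $(1-\gamma/2)^{-1/2}-1\asymp\gamma$. Combined with the $\gamma^{-1/2}$ already present in the per-restart complexity $N_i\asymp\sqrt{LD/(\gamma\Delta_i)}$, this yields $N_\eps \asymp \gamma^{-3/2}\sqrt{LD/\eps}$ and hence the $\gamma^{-3}$ in the rate. Minor points to handle cleanly are the uniform bound $V(x_*,x_i^0)=O(R^2)$ in a general (non-Euclidean) prox setup, and the fact that the additive $\sum 1$ term in $N_\eps$ and the small-$N$ boundary regime only affect absolute constants.
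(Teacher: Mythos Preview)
Your proposal is correct and follows essentially the same route as the paper's proof: apply \textbf{Lemma~\ref{WQC_conv}} together with the $A_k\geqslant k^2/(4L)$ bound from \textbf{Theorem~\ref{Main}} to bound each restart's length by $O\big(\sqrt{LR^2/(\gamma\Delta_i)}\big)$, use the break condition to get $\Delta_i\leqslant(1-\gamma/2)^i\Delta_0$, and sum the resulting geometric series with ratio $(1-\gamma/2)^{-1/2}$, picking up the extra $1/\gamma$ from the sum. Your treatment is in fact slightly more careful than the paper's (you explicitly justify the uniform bound $V(x_*,x_i^0)=O(R^2)$ via monotonicity and handle the additive $+1$ terms), but there is no substantive difference in the argument.
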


\begin{proof}
 Denote $\eps_0=f(x^0_0-f_*)$. From \textbf{Lemma \ref{WQC_conv}} and \textbf{Theorem \ref{Main}} we have that \[f(x_0^k)-f(x_*)\leqslant (1-\gamma)\eps_0 +\frac{2LR_0^2}{k^2},\] where $R_0=\|x_0-x_*\|_2$ We need to ensure \[f(x_0^k)-f(x_*)\leqslant (1-\gamma/2)\eps_0.\] That means that the method is first restarted no later than after $N_0=\ceil*{2\sqrt{\frac{LR^2}{\gamma\eps_0}}}$  iterations. Denote $R_1=\|x_1^0-x_*\|_2$. Again we apply \textbf{Lemma \ref{WQC_conv}} and \textbf{Theorem \ref{Main}}, we have \[f(x_1^k)-f(x_*)\leqslant (1-\gamma)(1-\gamma/2)\eps_0 +\frac{2LR_i^2}{k^2}\leqslant (1-\gamma/2)^2\eps_0,\] which implies that the second restart happens no later than after $N_1=\ceil*{2\sqrt{\frac{LR_1^2}{\gamma(1-\gamma/2)\eps_0}}}$. By proceeding in the same way we show that no more than $N_i=\ceil*{2\sqrt{\frac{LR_i^2}{\gamma(1-\gamma/2)^i\eps_0}}}$ iterations happen between the $i$-th and the $i+1$-th restarts.

Let $d=\log_{1-\gamma/2}\frac{\eps}{\eps_0}$. Then an $\eps$-solution is obtained in no more than $N=\sum\limits_{i=0}^d N_i$ iterations. We also know that the sequence $f(x^0_i)$ is non-increasing, so\\ $\forall i\ R_i\leqslant 2R=2\max\limits_{x:\ f(x)\leqslant f(x_0)} \|x\|$. It follows from our restart rule that $\eps<\eps_0(1-\gamma/2)^{d-1}$. Then we have the following sequence of relations:

\begin{align*}
N&\leqslant\sum_{i=0}^d \ceil*{2\sqrt{\frac{LR_i^2}{\gamma(1-\gamma/2)^i\eps_0}}}\leqslant d+1+\sum_{i=0}^d 2\sqrt{\frac{4LR^2}{\gamma\eps}}\left(1-\gamma/2\right)^{\frac{d-i+1}{2}}\leqslant\\
&\leqslant d+1+ 2\sqrt{\frac{4LR^2}{\gamma\eps}}\sum_{i=-\infty}^d \left(1-\gamma/2\right)^{\frac{d-i+1}{2}}=d+1+2\sqrt{\frac{4LR^2}{\gamma\eps}}\frac{\sqrt{1-\gamma/2}}{1-\sqrt{1-\gamma/2}}=\\
&=d+1+2\sqrt{4\frac{LR^2}{\gamma\eps}}\frac{\sqrt{1-\gamma/2}(1+\sqrt{1-\gamma/2})}{\gamma/2}=d+1+3\sqrt{\frac{4LR^2}{\gamma^3\eps}}=O\left(\sqrt{\frac{LR^2}{\gamma^3\eps}}\right).
\end{align*}
\end{proof}

Note, that using the Sequential Subspace Optimization Method \cite{SESOP} Guminov et al.\cite{guminov2017accelerated} show that the last bound in theorem \ref{Q} can be improved under small $\gamma$ to
$$f(\tilde{x}^N) - f(x_*) = O\left(\frac{LR^2}{\gamma^2 N^2}\right).$$ However, this requires solving a three-dimensional non-convex problem on each iteration. The method in this paper, on the other hand, only requires solving one minimization problem over an interval. If $R$ is known, the stopping criterion may be used to restart the method.
\subsection{Strongly convex objectives}
Assume now that the objective function in problem \eqref{COnvProb} is $\mu$-strongly convex with respect to the Euclidean norm:
\[\forall x,y\quad f(y)\geqslant f(x)+\langle\nabla f(x),y-x\rangle+\|y-x\|^2_2.\]Next we describe two different ways to modify or method in order to deal with strongly convex objective functions.

The first way is to consider a slightly different estimating sequence:
$$\psi_{k+1}(x) =  \psi_{k}(x) + a_{k+1}\{f(y^k) + \langle \nabla f(y^k), x - y^k \rangle+\frac{\mu}{2}\|x-y^k\|^2\}.$$ This leads us to the following method.

\begin{algorithm}[!h]
\caption{Accelerated Gradient Method with Small-Dimensional Relaxation (AGMsDR)}
\label{AGMsDR-SC}
\begin{algorithmic}[1]
%\REQUIRE $x^0 = v^0$, $L$
\ENSURE $x^k$
\STATE Set $k = 0$, $A_0=0$, $x^0 = v^0$, $\psi_0(x) = \frac{1}{2}\|x-x_0\|^2_2$, $\tau_0=1$
\FOR{$k \geqslant 0$}
\STATE \begin{equation}
    \label{eq:beta_k_y_k_def_sc}
    \beta_k = \arg\min_{\beta \in \left[0, 1 \right]} f\left(v^k + \beta (x^k - v^k)\right), \quad y^k = v^k + \beta_k (x^k - v^k).
\end{equation}
\STATE 
Option a), $L$ is known, 
\begin{equation}
\label{eq:xkp1_opt_a_sc}
    x^{k+1} = \arg\min_{x \in E} \left\{ f(y^k) + \langle \nabla f(y^k), x - y^k \rangle + \frac{L}{2} \|x - y^k \|^2 \right\}.
\end{equation}
Find $a_{k+1}$ from equation $\frac{a_{k+1}^2}{(\tau_{k}+\mu a_{k+1})(A_{k} + a_{k+1})} = \frac{1}{L}$.  \\
Option b), 
\begin{equation}
\label{eq:xkp1_opt_b_sc}
h_{k+1} = \arg\min_{h \geqslant 0} f\left(y^k - h(\nabla f(y^k))^{\#}\right), \quad x^{k+1} = y^  k- h_{k+1}(\nabla f(y^k))^{\#}.
\end{equation}
Find $a_{k+1}$ from equation $f(y^k) - \frac{a_{k+1}^2}{2(\tau_{k}+\mu a_{k+1})(A_k+a_{k+1})} \|\nabla f(y^k) \|_2^2+\frac{\mu\tau_k a_{k+1}}{2(\tau_{k}+\mu a_{k+1})(A_k+a_{k+1})}\|v^k-y^k\|^2=f(x^{k+1})$.
\STATE  Set $A_{k+1} = A_{k} + a_{k+1}$, $\tau_{k+1}=\tau_k+\mu a_{k+1}$.  
\STATE  Set $\psi_{k+1}(x) =  \psi_{k}(x) + a_{k+1}\{f(y^k) + \langle \nabla f(y^k), x - y^k \rangle+\frac{\mu}{2}\|x-y^k\|^2\}.$
\STATE $v^{k+1} = \arg\min_{x \in E} \psi_{k+1}(x)$
\STATE $k = k + 1$
\ENDFOR
\end{algorithmic}
\end{algorithm}

\begin{theorem} 
\label{Main_SC}
After $k$ steps of Algorithm \ref{AGMsDR-SC} for problem \eqref{COnvProb} it holds that
\begin{equation}
\label{eq:main_recurrence_sc}
        A_{k}f(x^{k}) \leqslant \min_{x \in \mathbb{R}^n} \psi_{k}(x) = \psi_{k}(v^{k}).
\end{equation}
Moreover, \[A_k\geqslant\max\left\lbrace \frac{k^2}{4L},\frac{1}{L}\left(1-\sqrt{\frac{\mu}{L}}\right)^{-(k-1)}\right\rbrace.\]
\end{theorem}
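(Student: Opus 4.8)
The plan is to follow the proof of Theorem~\ref{Main}, carrying along the extra strong-convexity terms. Note first that $\psi_k$ is a quadratic with leading coefficient $\tfrac12\tau_k$, where $\tau_k = 1 + \mu A_k$, so it is $\tau_k$-strongly convex with unique minimizer $v^k$ and $\psi_k(x) \geqslant \psi_k(v^k) + \tfrac{\tau_k}{2}\|x - v^k\|_2^2$. The recurrence \eqref{eq:main_recurrence_sc} is proved by induction, the case $k = 0$ being immediate. Assuming $A_k f(x^k) \leqslant \psi_k(v^k)$, we obtain
$$
\psi_{k+1}(v^{k+1}) \geqslant \min_{x \in \mathbb{R}^n} \left\{ A_k f(x^k) + \tfrac{\tau_k}{2}\|x - v^k\|_2^2 + a_{k+1}\left(f(y^k) + \langle \nabla f(y^k), x - y^k \rangle + \tfrac{\mu}{2}\|x - y^k\|_2^2\right)\right\}.
$$
To handle the new term $\tfrac{\mu a_{k+1}}{2}\|x - y^k\|_2^2$ I would write $x - y^k = (x - v^k) + (v^k - y^k)$ and expand every quadratic about $v^k$. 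Using $f(x^k) \geqslant f(y^k)$ and $A_{k+1} = A_k + a_{k+1}$, the bracketed expression becomes $A_{k+1}f(y^k)$ plus a quadratic in $t := x - v^k$ with leading coefficient $\tfrac{\tau_k + \mu a_{k+1}}{2} = \tfrac{\tau_{k+1}}{2}$ and linear part $a_{k+1}\langle \nabla f(y^k) + \mu(v^k - y^k), t\rangle$, plus the $t$-independent terms $a_{k+1}\langle\nabla f(y^k), v^k - y^k\rangle + \tfrac{\mu a_{k+1}}{2}\|v^k - y^k\|_2^2$.

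Minimizing the quadratic in $t$ exactly and regrouping, the coefficient of $\langle\nabla f(y^k), v^k - y^k\rangle$ collapses to $\tfrac{\tau_k a_{k+1}}{\tau_{k+1}} \geqslant 0$ and that of $\|v^k - y^k\|_2^2$ to $\tfrac{\mu\tau_k a_{k+1}}{2\tau_{k+1}}$. By exactly the three-case optimality analysis of $\beta_k$ used in Theorem~\ref{Main} we have $\langle\nabla f(y^k), v^k - y^k\rangle \geqslant 0$, so the residual cross term may be lower-bounded by zero, leaving
$$
\psi_{k+1}(v^{k+1}) \geqslant A_{k+1}f(y^k) - \frac{a_{k+1}^2}{2\tau_{k+1}}\|\nabla f(y^k)\|_2^2 + \frac{\mu\tau_k a_{k+1}}{2\tau_{k+1}}\|v^k - y^k\|_2^2.
$$
For option b) the right-hand side equals $A_{k+1}f(x^{k+1})$ by the very equation defining $a_{k+1}$ (multiply it through by $A_{k+1} = A_k + a_{k+1}$), which closes the induction; for option a), $\tfrac{a_{k+1}^2}{\tau_{k+1}A_{k+1}} = \tfrac1L$ and $L$-smoothness give $f(x^{k+1}) \leqslant f(y^k) - \tfrac1{2L}\|\nabla f(y^k)\|_2^2$, so the right-hand side is again $\geqslant A_{k+1}f(x^{k+1})$. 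One must also check that the equation in step~4 for option b) has a solution $a_{k+1}>0$: if $\nabla f(y^k)\neq0$, its left-hand side, as a continuous function of $a\geqslant0$, equals $f(y^k) > f(x^{k+1})$ at $a=0$ and tends to $f(y^k) - \tfrac{1}{2\mu}\|\nabla f(y^k)\|_2^2 \leqslant f(x_\ast) \leqslant f(x^{k+1})$ as $a\to\infty$ (using $f(x_\ast) \geqslant f(y^k) - \tfrac{1}{2\mu}\|\nabla f(y^k)\|_2^2$ for $\mu$-strongly convex $f$), so the intermediate value theorem applies and the larger root is taken.

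For the growth of $A_k$, note both options yield $\tfrac{a_{k+1}^2}{\tau_{k+1}A_{k+1}} \geqslant \tfrac1L$ — with equality for option a), and for option b) by comparing the defining equation with $f(x^{k+1}) \leqslant f(y^k) - \tfrac1{2L}\|\nabla f(y^k)\|_2^2$ and discarding the nonnegative $\|v^k - y^k\|_2^2$ term. Since $\tau_{k+1} = 1 + \mu A_{k+1} \geqslant 1$ this gives $\tfrac{a_{k+1}^2}{A_{k+1}} \geqslant \tfrac1L$, and the induction of Theorem~\ref{Main} (base case $a_1 \geqslant \tfrac1L$) carries over verbatim to give $A_k \geqslant \tfrac{k^2}{4L}$. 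On the other hand $\tau_{k+1} \geqslant \mu A_{k+1}$, so $\tfrac{a_{k+1}^2}{\mu A_{k+1}^2} \geqslant \tfrac1L$, i.e. $a_{k+1} \geqslant \sqrt{\mu/L}\,A_{k+1}$; hence $A_k = A_{k+1} - a_{k+1} \leqslant (1 - \sqrt{\mu/L})A_{k+1}$, and iterating from $A_1 \geqslant \tfrac1L$ yields $A_k \geqslant \tfrac1L(1 - \sqrt{\mu/L})^{-(k-1)}$. Taking the maximum of the two bounds gives the claim. The main obstacle is the bookkeeping in the exact minimization over $t$: one has to verify that, after completing the square, the surviving constant is precisely the combination of $\|\nabla f(y^k)\|_2^2$ and $\|v^k-y^k\|_2^2$ appearing in the equation defining $a_{k+1}$, and that the leftover cross term carries the sign needed to invoke the $\beta_k$-optimality conditions.
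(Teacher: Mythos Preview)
Your proof is correct and follows essentially the same route as the paper's: the same $\tau_k$-strong-convexity lower bound on $\psi_k$, the same three-case analysis of $\beta_k$, the same key inequality $\psi_{k+1}(v^{k+1}) \geqslant A_{k+1}f(y^k) - \tfrac{a_{k+1}^2}{2\tau_{k+1}}\|\nabla f(y^k)\|_2^2 + \tfrac{\mu\tau_k a_{k+1}}{2\tau_{k+1}}\|v^k-y^k\|_2^2$, and the same growth inequality $\tfrac{a_{k+1}^2}{\tau_{k+1}A_{k+1}} \geqslant \tfrac1L$ leading to both lower bounds on $A_k$. The only cosmetic differences are that the paper plugs in the explicit minimizer $x = \tfrac{1}{\tau_{k+1}}(\tau_k v^k + \mu a_{k+1}y^k - a_{k+1}\nabla f(y^k))$ rather than completing the square in $t = x - v^k$, writes out the quadratic formula for $a_{k+1}$ explicitly instead of invoking the intermediate value theorem, and obtains the $k^2/(4L)$ bound via a $\sqrt{A_i}$-telescoping rather than by citing the induction in Theorem~\ref{Main}.
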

\begin{proof} 
Denote
$$l_k(x) = \sum_{i=0}^k a_{i+1}\{f(y^i) + \langle \nabla f(y^i), x - y^i \rangle +\frac{\mu}{2}\|x-y^k\|^2\}.$$
Then 
$$\psi_{k+1}(x) = l_{k}(x) + \psi_0(x) = \psi_{k}(x) + a_{k+1}\{f(y^k) + \langle \nabla f(y^k), x - y^k \rangle+\frac{\mu}{2}\|x-y^k\|^2\}.$$

Note that $\psi_{k}$ is a sum of a 1-strongly convex function $\psi_0$ and $\mu a_i$-strongly convex functions for $i=1,\ldots,k$, which means that $\psi_{k}$ is $\tau_{k}$-strongly convex, where $\tau_{k}=1+\mu\sum\limits_{i=1}^{k}a_i=1+\mu A_{k}$.

First, we prove inequality \eqref{eq:main_recurrence_sc} by induction over $k$. For $k=0$, the inequality holds. Assume that 
$$A_{k}f(x^{k}) \leqslant \min_{x \in \mathbb{R}^n} \psi_{k}(x) = \psi_{k}(v^{k}).$$
Then 
$$\psi_{k+1}(v^{k+1}) = \min_{x \in \mathbb{R}^n} \left\{ \psi_{k}(x) + a_{k+1}\{f(y^k) + \langle \nabla f(y^k), x - y^k \rangle+\frac{\mu}{2}\|x-y^k\|^2\} \right\}  $$
$$  \geqslant \min_{x \in \mathbb{R}^n} \left\{ \psi_{k}(v^k) +\frac{\tau_k}{2}\|x-v^k\|^2 + a_{k+1}\{f(y^k) + \langle \nabla f(y^k), x - y^k \rangle+\frac{\mu}{2}\|x-y^k\|^2\} \right\} $$
$$\geqslant \min_{x \in \mathbb{R}^n} \left\{ A_{k}f(x^k) +\frac{\tau_k}{2}\|x-v^k\|^2 + a_{k+1}\{f(y^k) + \langle \nabla f(y^k), x - y^k \rangle+\frac{\mu}{2}\|x-y^k\|^2\} \right\}.$$
Here we used that $\psi_{k}$ is a $\tau_k$-strongly convex function with minimum at $v^k$.

By the definition of $\beta_k$ and $y_k$ in \eqref{eq:beta_k_y_k_def_sc}, we have 
$f(y^k) \leqslant f(x^k)$. By the optimality conditions in \eqref{eq:beta_k_y_k_def_sc}, either
\begin{enumerate}
    \item $\beta_k = 0$, $\langle \nabla f(y^k),x^k - v^k \rangle \geqslant 0$, $y^k = v^k$;
    \item $\beta_k \in (0,1)$ and $\langle \nabla f(y^k),x^k - v^k \rangle = 0$, $y^k = v^k + \beta_k (x^k - v^k)$;
    \item $\beta_k = 1$ and $\langle \nabla f(y^k),x^k - v^k \rangle \leqslant 0$, $y^k = x^k$ .
\end{enumerate}
In all three cases,  $\langle \nabla f(y^k), v^k - y^k \rangle \geqslant 0$. Thus,
%and \sg{$\beta_k = \arg\min _{\beta \in \left[0, 1 \right]} f\left(v^k + \beta (x^k - v^k)\right)$; $y^k = v^k + \beta_k (x^k - v^k)$. Then $f(y^k)\leqslant f(x^k)$, and either $\langle \nabla f(y^k), v^k - y^k \rangle=0$ from optimality conditions, or $\beta_k=0$ and $\langle \nabla f(y^k), v^k - y^k \rangle\geqslant0$ from optimality conditions, or $\beta_k=1$ and $v^k-y^k=0$}. From these two inequalities and the fact that $A_{k+1} = A_{k} +a_{k+1}$ one can obtain
$$\psi_{k+1}(v^{k+1}) \geqslant \min_{x \in \mathbb{R}^n} \left\{ A_{k}f(y^k) +\frac{\tau_k}{2}\|x-v^k\|^2 + a_{k+1}\{f(y^k) + \langle \nabla f(y^k), x - y^k \rangle+\frac{\mu}{2}\|x-y^k\|^2\} \right\}. $$

The explicit solution to this quadratic minimization problem is \[x=\frac{1}{\tau_{k+1}}(\tau_k v^k+\mu a_{k+1}y^k-a_{k+1}\nabla f(y^k)).\] By plugging in the solution and using $\langle \nabla f(y^k), v^k - y^k \rangle \geqslant 0$, we obtain 

\[\psi_{k+1}(v^{k+1})\geqslant  A_{k+1}f(y^k) - \frac{a_{k+1}^2}{2\tau_{k+1}}\|\nabla f(y^k)\|_2^2+\frac{\mu\tau_k a_{k+1}}{2\tau_{k+1}}\|v^k-y^k\|^2.\]
Our next goal is to show that
\begin{equation} 
\label{eq:ind_fin_step_sc}
A_{k+1}f(y^k) - \frac{a_{k+1}^2}{2\tau_{k+1}}\|\nabla f(y^k)\|_2^2+\frac{\mu\tau_k a_{k+1}}{2\tau_{k+1}}\|v^k-y^k\|^2 \geqslant A_{k+1}f(x^{k+1}),
\end{equation}
which proves the induction step.

For option a) in the step 4, \eqref{grad_descent_guarantee} takes the form
\begin{equation}
 f(x^{k+1}) \leqslant f(y^k) - \frac{1}{2L} \|\nabla f(y^k)\|^2.
 \label{grad_descent_guarantee_2}
\end{equation}

Since, for this option, $\frac{a_{k+1}^2}{(\tau_{k}+\mu a_{k+1})(A_{k} + a_{k+1})} = \frac{1}{L}$, inequality \eqref{eq:ind_fin_step_sc} holds.
For option b) in the step 4, \eqref{eq:ind_fin_step_sc} holds by the choice of $a_{k+1}$ from the equation
\begin{equation}
    \label{eq:Th:Main_sc_proof_1}
    f(y^k) - \frac{a_{k+1}^2}{2A_{k+1}\tau_{k+1}} \|\nabla f(y^k) \|_2^2+\frac{\mu\tau_k a_{k+1}}{2A_{k+1}\tau_{k+1}}\|v^k-y^k\|^2=f(x^{k+1}).
\end{equation}
It remains to show that this equation has a solution $a_{k+1} > 0$.
Since $A_{k+1}=A_k + a_{k+1}$ and $\tau_{k+1}=\tau_k+\mu a_{k+1}$, we can rewrite the equation \eqref{eq:Th:Main_sc_proof_1} as 
\[
(2\mu\delta_k+\|\nabla f(y^k)\|^2_2)a_{k+1}^2+(2\delta_k(\tau_k+\mu A_k)-\mu\tau_k\|v^k-y^k\|^2_2)a_{k+1}+2\tau_k A_k\delta_k=0,
\]
where $\delta_k=f(x^{k+1})-f(y^k)<0$.
By strong convexity, $f(y^{k}) - f(x_*) \leqslant \frac{1}{2\mu}\|\nabla f(y^k)\|^2_2$, we have \[2\mu\delta_k+\|\nabla f(y^k)\|^2_2\geqslant 2\mu(f(x^{k+1})-f(x_*))\geqslant 0.\] Therefore, a non-negative solution exists and may be written down as
\[a_{k+1}=\frac{-S_k+\sqrt{S_k^2-8A_k\delta_k\tau_k(2\delta_{k}\mu+\|\nabla f(y_k)\|^2)}}{4\delta_{k}\mu+2\|\nabla f(y_k)\|^2},\]
where $S_k=2\delta_{k}(\tau_k+\mu A_k)-\mu\tau_k\|v^k-y^k\|^2$

Let us estimate the rate of the growth for $A_k$. If in the step 4 option a) is used, $\frac{a_{k+1}^2}{\tau_{k+1}A_{k+1}} = \frac{1}{L}$. For the option b), using \eqref{eq:Th:Main_sc_proof_1} and \eqref{grad_descent_guarantee_2}, we have 
\[f(y^k) - \frac{a_{k+1}^2}{2A_{k+1}\tau_{k+1}} \|\nabla f(y^k) \|_2^2+\frac{\mu\tau_k a_{k+1}}{2A_{k+1}\tau_{k+1}}\|v^k-y^k\|^2\leqslant f(y^k)-\frac{1}{2L}\|\nabla f(y^k)\|^2_2.\] Thus, for both options, $\frac{a_{k+1}^2}{\tau_{k+1}A_{k+1}} \geqslant \frac{1}{L}$, or \[a_i\geqslant\frac{1}{\sqrt{L}}\sqrt{A_i+\mu A_i^2}\geqslant\sqrt{\frac{\mu}{L}}A_i.\] Using the left inequality, we obtain

\begin{equation}\label{sqrt_A}
    \sqrt{A_{i}}-\sqrt{A_{i-1}}\geqslant \frac{A_{i}-A_{i-1}}{\sqrt{A_{i}}+\sqrt{A_{i-1}}}\geqslant \frac{a_{i}}{2\sqrt{A_{i}}}\geqslant \frac{1}{2\sqrt{L}}\sqrt{1+\mu A_{i}}.
\end{equation} This in turn implies a weaker inequality

\[\sqrt{A_{i}}-\sqrt{A_{i-1}}\geqslant \frac{1}{2\sqrt{L}}.\] Summing it up for $i=1,\ldots, k$ we get \[A_{k}\geqslant \frac{k^2}{4L}.\]

\iffalse
We now return to \eqref{sqrt_A}, where we divide both sides by $A_{i-1}$ and rearrange the terms:
\[\sqrt{\frac{A_i}{A_{i-1}}}\geqslant 1+\frac{1}{2\sqrt{L}}\sqrt{\frac{1}{A_{i-1}}+\mu\frac{A_i}{A_{i-1}}}\geqslant 1+\sqrt{\frac{\mu A_i}{4 L A_{i-1}}},\]

\[\left(1-\sqrt{\frac{\mu}{4L}}\right)\sqrt{\frac{A_i}{A_{i-1}}}\geqslant 1,\]

\[A_i\geqslant A_{i-1}\left(1-\sqrt{\frac{\mu}{4L}}\right)^{-2}.\] Using the last inequality for $i=2,\ldots,k$ and reminding ourselves that $A_1\geqslant\frac{1}{L}$, we obtain

\[A_k\geqslant\frac{1}{L}\left(1-\sqrt{\frac{\mu}{4L}}\right)^{-2(k-1)}.\]
\fi

We also have \[A_{k+1}=A_k+a_{k+1}\geqslant A_k+\sqrt{\frac{\mu}{L}}A_{k+1},\] which leads to \[A_{k+1}\geqslant\left(1-\sqrt{\frac{\mu}{L}}\right)^{-1}A_k.\] To use this bound we only need to estimate $A_1$, which we can do as follows: \[A_1=\frac{a_1^2}{A_1}\geqslant\frac{a_{1}^2}{(1+\mu A_1)A_{1}}=\frac{a_{1}^2}{\tau_{1}A_{1}} \geqslant \frac{1}{L}\]

By recursively applying the last bound we reach the desired result:

\[A_k\geqslant\max\left\lbrace \frac{k^2}{4L},\frac{1}{L}\left(1-\sqrt{\frac{\mu}{L}}\right)^{-(k-1)}\right\rbrace\]

\end{proof}

\begin{theorem}
\label{SC_conv}
Let function $f$ be $\mu$-strongly convex and $L$-smooth and Algorithm \ref{AGMsDR} be run for $N$ steps. Then \[f(x_{k})-f(x_*)\leqslant\min\left\lbrace\frac{2LR^2}{k^2},\left(1-\sqrt{\frac{\mu}{L}}\right)^{-(k-1)}LR^2\right\rbrace,\]

where $R=\|x_0-x_*\|$
\end{theorem}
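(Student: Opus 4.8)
The plan is to combine the master inequality from Theorem~\ref{Main_SC} with the convexity of $f$ applied at the optimal point $x_*$, mirroring the argument that produced Theorem~\ref{Th:RateConv} from Theorem~\ref{Main}. First I would note that since $\psi_k$ majorizes $A_k$ times the linear-plus-quadratic model of $f$ built along the trajectory, we have the pointwise bound $\psi_k(x) \leqslant A_k f(x) + \psi_0(x)$ for every $x$; this follows because $f$ is $\mu$-strongly convex, so each term $f(y^i) + \langle \nabla f(y^i), x - y^i\rangle + \frac{\mu}{2}\|x-y^i\|^2 \leqslant f(x)$, and $\psi_0(x) = \frac12\|x-x_0\|^2$. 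Evaluating at $x = x_*$ and using Theorem~\ref{Main_SC}, namely $A_k f(x^k) \leqslant \psi_k(v^k) \leqslant \psi_k(x_*)$, gives
\[
A_k f(x^k) \leqslant A_k f(x_*) + \tfrac12\|x_* - x_0\|^2 = A_k f(x_*) + \tfrac12 R^2,
\]
hence $f(x^k) - f(x_*) \leqslant \frac{R^2}{2A_k}$.

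The remaining step is purely to substitute the lower bound on $A_k$ established in the second half of Theorem~\ref{Main_SC}, namely
\[
A_k \geqslant \max\left\{\frac{k^2}{4L},\ \frac{1}{L}\left(1-\sqrt{\tfrac{\mu}{L}}\right)^{-(k-1)}\right\}.
\]
Plugging the first term of the max into $f(x^k)-f(x_*)\leqslant \frac{R^2}{2A_k}$ yields the bound $\frac{2LR^2}{k^2}$, and plugging in the second term yields $\left(1-\sqrt{\mu/L}\right)^{-(k-1)} \! $-factor times $\frac{L R^2}{2}$; since $A_k$ exceeds both, $f(x^k)-f(x_*)$ is at most the minimum of the two expressions, which is exactly the claimed estimate (up to the harmless constant in the linearly convergent term). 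I would write this out in two or three lines.

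I do not expect any genuine obstacle here: the theorem is a direct corollary of the two parts of Theorem~\ref{Main_SC} together with the elementary majorization $\psi_k(x_*) \leqslant A_k f(x_*) + \frac12 R^2$. The only point requiring a line of justification is that majorization inequality, which rests on strong convexity of $f$ guaranteeing $f(x_*) \geqslant f(y^i) + \langle \nabla f(y^i), x_* - y^i\rangle + \frac{\mu}{2}\|x_* - y^i\|^2$ for each $i$, summed with weights $a_{i+1}$ and added to $\psi_0(x_*) = \frac12 R^2$. Everything else is bookkeeping. One minor cosmetic discrepancy to watch: the statement writes the linearly convergent term without an explicit $\tfrac12$, so I would either absorb the constant or note the bound holds up to a universal constant, consistent with the $O(\cdot)$-style conventions used elsewhere in the paper.
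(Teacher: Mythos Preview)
Your proposal is correct and follows essentially the same route as the paper: use $\mu$-strong convexity to bound each summand of $l_{k-1}(x_*)$ above by $f(x_*)$, obtaining $\psi_k(x_*)\leqslant A_k f(x_*)+\tfrac12 R^2$, combine this with the master inequality $A_k f(x^k)\leqslant\psi_k(v^k)\leqslant\psi_k(x_*)$ from Theorem~\ref{Main_SC}, and then insert the lower bound on $A_k$. Your observation about the stray factor $\tfrac12$ (and, implicitly, the sign of the exponent $(k-1)$ in the linearly convergent term) is well taken; the paper's own derivation yields $\tfrac{LR^2}{2}\bigl(1-\sqrt{\mu/L}\bigr)^{k-1}$ just as yours does, so the discrepancy is a typographical artifact of the stated theorem rather than a flaw in your argument.
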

\begin{proof}
According to the definition of $\mu$-strong convexity 
$$l_k(x_*) = \sum_{i=0}^k a_{i+1}\{f(y^i) + \langle \nabla f(y^i), x_* - y^i \rangle+\frac{\mu}{2}\|x_*-y^i\|^2_2\leqslant\sum_{i=0}^k a_{i+1}f(x_*)=A_{k+1}f(x_*).$$
From this inequality and \textbf{Theorem} \ref{Main_SC} we have
$$A_{k} f(x_{k}) \leqslant \min_{x \in \mathbb{R}^n} \psi_{k}(x) \leqslant \psi_{k}(x_*) = l_{k-1}(x_*) + \frac{1}{2}\|x_0-x_*\|^2_2 \leqslant A_k f(x_*) + \frac{1}{2}\|x_0-x_*\|^2_2.$$

Finally, denoting $R=\|x_0-x_*\|$, we have \[f(x_{k})-f(x_*)\leqslant\min\left\lbrace\frac{2LR^2}{k^2},\left(1-\sqrt{\frac{\mu}{L}}\right)^{(k-1)}LR^2\right\rbrace.\]
\end{proof}

Another way to apply the algorithm to strongly convex objective is to use a restart procedure.
\begin{algorithm}[!h]
\caption{Accelerated Gradient Method with Small-Dimensional Relaxation (AGMsDR)}
\label{AGMsDR_SC_Restarted}
\begin{algorithmic}[1]
%\REQUIRE $x^0 = v^0$, $L$
\ENSURE $x^k_i$
\FOR{$i \geqslant 0$}
\STATE Set $k = 0$, $A_0=0$, $x_i^0 = v_i^0$, $\psi^i_0(x) = \frac{1}{2}\|x-x_0\|^2_2$
\FOR{$k \geqslant 0$}
\STATE \begin{equation}
    \label{eq:beta_k_y_k_def_sc_Restarted}
    \beta_k = \arg\min_{\beta \in \left[0, 1 \right]} f\left(v_i^k + \beta (x_i^k - v_i^k)\right), \quad y_i^k = v_i^k + \beta_k (x_i^k - v_i^k).
\end{equation}
\STATE 
Option a), $L$ is known, 
\begin{equation}
\label{eq:xkp1_opt_a_sc_Restarted}
    x_i^{k+1} = \arg\min_{x \in E} \left\{ f(y_i^k) + \langle \nabla f(y_i^k), x - y_i^k \rangle + \frac{L}{2} \|x - y_i^k \|^2 \right\}.
\end{equation}
Find $a_{k+1}$ from equation $\frac{a_{k+1}^2}{A_{k} + a_{k+1}} = \frac{1}{L}$.  \\
Option b), 
\begin{equation}
\label{eq:xkp1_opt_b_sc_Restarted}
h_{k+1} = \arg\min_{h \geqslant 0} f\left(y_i^k - h(\nabla f(y_i^k))^{\#}\right), \quad x_i^{k+1} = y_i^ k- h_{k+1}(\nabla f(y_i^k))^{\#}.
\end{equation}
Find $a_{k+1}$ from equation $f(y_i^k) - \frac{a_{k+1}^2}{2(A_{k} + a_{k+1})} \|\nabla f(y_i^k) \|_*^2 = f(x_i^{k+1})$.
\STATE  Set $A_{k+1} = A_{k} + a_{k+1}$.  
\STATE  Set $\psi_{k+1}(x) =  \psi_{k}(x) + a_{k+1}\{f(y_i^k) + \langle \nabla f(y_i^k), x - y_i^k \rangle\}$.
\STATE $v_i^{k+1} = \arg\min_{x \in E} \psi_{k+1}(x)$
\IF{$\|x_i^k-x_*\|_2^2\leqslant\frac{1}{2}\|x_i^0-x_*\|_2^2$ 
\iffalse\pd{We can't check this inequality as we do not know $x_*$. The idea is to use inequality $\frac{\mu}{2}\|x^k-x_*\|^2 \leqslant (x^k)-f(x_*)\leqslant \frac{R^2}{2A_k}$ . Whenever the r.h.s. is smaller than $\frac{\mu}{4}$, we have that $\|x_i^k-x_*\|_2^2\leqslant\frac{1}{2}\|x_i^0-x_*\|_2^2$.}\fi}
\STATE \textbf{break}
\ENDIF
\STATE $k = k + 1$
\ENDFOR
\STATE Set $x_{i+1}^0=x_i^N$.
\STATE $i = i + 1$
\ENDFOR
\end{algorithmic}
\end{algorithm}

Of course, we have no direct way to check the inequality in step 9 of the algorithm. However, using strong convexity, we have that $\frac{\mu}{2}\|x^k-x_*\|^2 \leqslant f(x^k)-f(x_*)\leqslant \frac{R^2}{2A_k}$. Provided $\mu$ is known, it is sufficient to check whether the r.h.s. is smaller than $\frac{\mu}{4}R^2$, which would imply $\|x_i^k-x_*\|_2^2\leqslant\frac{1}{2}\|x_i^0-x_*\|_2^2$.
\begin{theorem}
\label{Main_sc_restated}
If $f(x)$ is a $\mu$-strongly convex and $L$-smooth function, then
\[\|\tilde{x}^N-x_*\|_2^2=O(2^{-\sqrt{\mu N^2/L}}R^2),\] \[
f(\tilde{x}^N) - f(x_*) = O(2^{-\sqrt{\mu N^2/L}}LR^2).
\]
where $R=\|x_0-x^*\|.$

\end{theorem}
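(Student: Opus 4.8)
The plan is to analyze Algorithm \ref{AGMsDR_SC_Restarted} as a restart scheme built on top of the basic convergence guarantee $f(x^k)-f(x_*)\leqslant \frac{R^2}{2A_k}$ with $A_k\geqslant \frac{k^2}{4L}$, which follows from Theorem \ref{Main} applied within a single outer iteration $i$ (the inner loop is exactly Algorithm \ref{AGMsDR} with $\psi_0(x)=\frac12\|x-x_i^0\|_2^2$). Let $R_i = \|x_i^0 - x_*\|_2$. First I would bound the number of inner iterations needed before the restart condition $\|x_i^k-x_*\|_2^2\leqslant \frac12 \|x_i^0-x_*\|_2^2$ is triggered. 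By $\mu$-strong convexity, $\frac{\mu}{2}\|x_i^k-x_*\|_2^2\leqslant f(x_i^k)-f(x_*)\leqslant \frac{2LR_i^2}{k^2}$, so it suffices to have $\frac{2LR_i^2}{k^2}\leqslant \frac{\mu}{4}R_i^2$, i.e. $k\geqslant \sqrt{8L/\mu} =: N_0$, a number independent of $i$. Hence each outer iteration takes at most $\lceil \sqrt{8L/\mu}\rceil$ inner steps, and the restart guarantees $R_{i+1}^2 \leqslant \frac12 R_i^2$, so $R_i^2 \leqslant 2^{-i}R^2$ where $R = R_0 = \|x_0-x_*\|_2$.

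Next I would count: after $m$ restarts the total number of iterations is $N \leqslant m\,\lceil\sqrt{8L/\mu}\rceil$, so $m \geqslant c\,N\sqrt{\mu/L}$ for an absolute constant $c$ (as long as $N_0\geqslant 1$, which holds since $L\geqslant\mu$). Therefore $\|\tilde{x}^N - x_*\|_2^2 \leqslant R_m^2 \leqslant 2^{-m}R^2 = O\!\left(2^{-c N\sqrt{\mu/L}}R^2\right)$. To match the exponent $\sqrt{\mu N^2/L}$ in the statement, note $N\sqrt{\mu/L} = \sqrt{\mu N^2/L}$, so the two forms agree up to the constant $c$ absorbed in the $O(\cdot)$. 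The function-value bound then follows from $f(\tilde x^N)-f(x_*)\leqslant \frac{L R_{m}^2}{\ldots}$; more simply, taking $x = x_m^0$ at the last completed restart, $f(x_m^0)-f(x_*)\leqslant \frac{L}{2}\|x_{m-1}^N - x_*\|_2^2\cdot(\text{const})$ — or, cleanest, use $L$-smoothness at $x_*$: $f(\tilde x^N)-f(x_*)\leqslant \frac{L}{2}\|\tilde x^N - x_*\|_2^2 = O\!\left(2^{-\sqrt{\mu N^2/L}}LR^2\right)$, using $\nabla f(x_*)=0$.

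A couple of points need care. One must check that the restart rule in step 9, which as written uses the unknowable quantity $\|x_i^k-x_*\|_2^2$, is being replaced (as the text after the algorithm explains) by the checkable surrogate: restart once $\frac{R_i^2}{2A_k}\leqslant \frac{\mu}{4}R_i^2$, equivalently $A_k\geqslant \frac{2}{\mu}$; since $A_k\geqslant \frac{k^2}{4L}$ this is guaranteed for $k\geqslant \sqrt{8L/\mu}$, and strong convexity then yields $\|x_i^k-x_*\|_2^2 \leqslant \frac{2}{\mu}(f(x_i^k)-f(x_*)) \leqslant \frac{2}{\mu}\cdot\frac{\mu}{4}R_i^2 = \frac12 R_i^2$, so the geometric decay of $R_i^2$ is preserved. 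The second point is the bookkeeping of the constants in the exponent and the rounding $\lceil\cdot\rceil$; these only affect the absolute constant hidden in $O(\cdot)$, so they are routine. The main obstacle, such as it is, is simply making the surrogate restart criterion rigorous and confirming that $N_0=\Theta(\sqrt{L/\mu})$ inner steps suffice uniformly over all restarts — everything else is the standard telescoping argument for restarted accelerated methods.
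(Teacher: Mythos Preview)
Your proposal is correct and follows essentially the same route as the paper: bound the inner-loop length by $\lceil\sqrt{8L/\mu}\rceil$ via $\frac{\mu}{2}\|x_i^k-x_*\|_2^2\leqslant f(x_i^k)-f(x_*)\leqslant \frac{2LR_i^2}{k^2}$, deduce the geometric halving $R_i^2\leqslant 2^{-i}R^2$, and convert the number of restarts into the exponent $\sqrt{\mu N^2/L}$. Your treatment is in fact slightly more thorough than the paper's, since you spell out the surrogate restart test $A_k\geqslant 2/\mu$ and derive the function-value bound cleanly from $L$-smoothness at $x_*$, whereas the paper simply asserts both conclusions after obtaining $N_i\leqslant\lceil\sqrt{8L/\mu}\rceil$.
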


\begin{proof}
 From the very definition of strong convexity we have \[\frac{\mu}{2}\|x_i^k-x_*\|^2_2\leqslant f(x_i^k)-f(x_*).\] From  \textbf{Theorem \ref{Th:RateConv}} we have that \[f(x_i^k)-f(x_*)\leqslant \frac{2L\|x_i^0-x_*\|_2^2}{k^2}.\] To ensure $\|x_i^k-x_*\|_2^2\leqslant\frac{1}{2}\|x_i^0-x_*\|_2^2$ we then need to satisfy the following inequality:
 \[\frac{4L\|x_i^0-x_*\|_2^2}{\mu k^2}\leqslant\frac{1}{2}\|x_i^0-x_*\|_2^2.\]
 That means that no more than $N_i=\ceil*{\sqrt{8L/\mu}}$ iterations happen between the $i$-th and the $i+1$-th restarts.

Hence, \[\|\tilde{x}^N-x_*\|_2^2=O(2^{-\sqrt{\mu N^2/L}}R^2),\] \[
f(\tilde{x}^N) - f(x_*) = O(2^{-\sqrt{\mu N^2/L}}LR^2).
\]

\end{proof}
\subsection{Implementation details: line-search accuracy}

In our methods the line search step is used to perform linear coupling and steepest descent. It will now be shown that in both cases performing the line search exactly is not critical for the methods' convergence.

\begin{itemize}
\item \textbf{Steepest descent}. In algorithms APDLSGD, UAPDLSGD and SCUAPDLSGD steepest descent is used to construct $x^{k+1}$. However, the convergence analysis of all the above methods only relies on $f(x^{k+1})$ being no greater than $f(y^k-\frac{1}{l}\nabla f(y^k))$, where $l=L$ for the APDLSGD method and $l=M(\frac{a_{k+1}}{A_{k+1}}\eps, \nu, M_\nu)$ for the universal methods. This means that the accuracy of line search has no effect on the worst-case convergence bounds, as long as it is good enough to ensure that the result is no worse than one obtained by performing a gradient descent step. Since for most objectives using exact steepest descent should result in iterates different from ones obtained by gradient descent, it is reasonable to expect that performing steepest descent with some small error will still lead to iterates with low enough objective values. 

\item\textbf{Linear coupling}. The fact that the step \[\beta_k = \arg\min_{\beta \in \left[0, 1 \right]} f\left(v^k + \beta (x^k - v^k)\right);\ y^k = v^k + \beta_k (x^k - v^k) \] guarantees $f(y^k)\leqslant f(x^k)$ and $\la\nabla f(y^k),v^k-y^k\ra\geqslant 0$ is used in the convergence analysis. Again, it is reasonable to expect the first inequality to hold true in the case of inexact line search. We will now show that allowing for some error in the second inequality does not lead to accumulating errors.  
\end{itemize}

\begin{lemma}
For the not necessarily convex problem \eqref{COnvProb} and the APDGD method with step 3 performed in a way that guarantees $f(y^k)\leqslant f(x^k)$ and $\la\nabla f(y^k),v^k-y^k\ra\geqslant -\tilde{\eps},$
$$A_{k+1}f(x^{k+1}) \leqslant \min_{x \in \mathbb{R}^n} \psi_{k+1}(x) + A_{k+1}\tilde{\eps} = \psi_{k+1}(v^{k+1}) + A_{k+1}\tilde{\eps}$$
and
$$A_k = O\left(\frac{k^2}{L}\right).$$
\end{lemma}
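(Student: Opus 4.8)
The plan is to retrace the induction in the proof of Theorem~\ref{Main}, keeping careful track of where the inexactness of the line search enters and showing that it contributes only an additive $A_{k+1}\tilde\eps$ term at each step rather than an accumulating one. First I would set up the same estimating sequence notation, $l_k(x) = \sum_{i=0}^k a_{i+1}\{f(y^i) + \langle \nabla f(y^i), x - y^i \rangle\}$ and $\psi_{k+1}(x) = \psi_k(x) + a_{k+1}\{f(y^k) + \langle \nabla f(y^k), x - y^k \rangle\}$, and prove the modified recurrence $A_{k}f(x^{k}) \leqslant \psi_{k}(v^{k}) + A_k\tilde\eps$ by induction on $k$. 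The base case $k=0$ is immediate since $A_0=0$. For the induction step I would, exactly as before, use $\tau_k$-strong convexity (here $1$-strong convexity) of $\psi_k$ to write $\psi_{k+1}(v^{k+1}) \geqslant \min_x\{\psi_k(v^k) + \tfrac12\|x-v^k\|^2 + a_{k+1}(f(y^k)+\langle\nabla f(y^k),x-y^k\rangle)\}$, then substitute the induction hypothesis $\psi_k(v^k) \geqslant A_k f(x^k) - A_k\tilde\eps$.

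The crucial modification is the estimate of the inner product $\langle\nabla f(y^k), v^k-y^k\rangle$. In the exact case this is $\geqslant 0$; now we only have $\geqslant -\tilde\eps$. So when I split $\langle\nabla f(y^k), x-y^k\rangle = \langle\nabla f(y^k), x-v^k\rangle + \langle\nabla f(y^k), v^k-y^k\rangle$ and combine $a_{k+1}f(y^k) + A_kf(y^k) = A_{k+1}f(y^k)$ (using $f(y^k)\leqslant f(x^k)$), the term $a_{k+1}\langle\nabla f(y^k), v^k-y^k\rangle \geqslant -a_{k+1}\tilde\eps \geqslant -A_{k+1}\tilde\eps$ is what I carry forward. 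Thus
\[
\psi_{k+1}(v^{k+1}) \geqslant \min_{x}\left\{A_{k+1}f(y^k) + a_{k+1}\langle\nabla f(y^k), x-v^k\rangle + \tfrac12\|x-v^k\|^2\right\} - A_k\tilde\eps - a_{k+1}\tilde\eps,
\]
and minimizing the quadratic via $\langle g,s\rangle + \tfrac{\zeta}{2}\|s\|^2 \geqslant -\tfrac{1}{2\zeta}\|g\|_*^2$ gives $\psi_{k+1}(v^{k+1}) \geqslant A_{k+1}f(y^k) - \tfrac{a_{k+1}^2}{2}\|\nabla f(y^k)\|_*^2 - A_{k+1}\tilde\eps$. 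From here the argument is identical to Theorem~\ref{Main}: inequality~\eqref{eq:ind_fin_step}, namely $A_{k+1}f(y^k) - \tfrac{a_{k+1}^2}{2}\|\nabla f(y^k)\|_*^2 \geqslant A_{k+1}f(x^{k+1})$, holds by the same choice of $a_{k+1}$ (option a) or b) in step 4), since that part of the analysis only uses $L$-smoothness and the steepest-descent/gradient-step guarantee, neither of which is affected by the coupling error. This closes the induction with the extra $A_{k+1}\tilde\eps$ term.

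For the bound $A_k = O(k^2/L)$, nothing changes at all: the growth estimate in Theorem~\ref{Main} is derived solely from the relations $\tfrac{a_{k+1}^2}{A_k+a_{k+1}} \geqslant \tfrac1L$ (option a) with equality, option b) with inequality from~\eqref{eq:Th:Main_proof_2}) and $A_1 = a_1 \geqslant \tfrac1L$, and these are unaffected by the inexact line search in step 3. So the same induction yields $\alpha_k \geqslant \tfrac{k}{2L}$ and $A_k \geqslant \tfrac{k^2}{4L}$, hence $A_k = O(k^2/L)$. I do not anticipate a genuine obstacle here; the only point requiring care is bookkeeping — making sure the error introduced at step $k$ is bounded by $a_{k+1}\tilde\eps$ (not something larger), so that after absorbing it into $A_{k+1}\tilde\eps$ and dividing through by $A_{k+1}$ in applications one gets a final suboptimality overhead of order $\tilde\eps$ rather than one growing with $k$; this is precisely the assertion that the errors "do not accumulate."
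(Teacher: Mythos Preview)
Your proof is correct and follows essentially the same route as the paper's own argument: the same induction, the same use of $1$-strong convexity of $\psi_k$, the same split of $\langle\nabla f(y^k), x-y^k\rangle$ introducing the extra $-a_{k+1}\tilde\eps$, and the same observation that the growth estimate for $A_k$ is untouched since it depends only on step~4. The only cosmetic difference is that you spell out the bookkeeping $-A_k\tilde\eps - a_{k+1}\tilde\eps = -A_{k+1}\tilde\eps$ a bit more explicitly than the paper does.
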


\begin{proof}
Theorem can be prove by induction. Let's consider the step of induction. That is, assume that we've already proved that 
$$A_{k}f(x^{k}) \leqslant \min_{x \in \mathbb{R}^n} \psi_{k}(x) +A_{k}\tilde{\eps} = \psi_{k}(v^{k})+A_{k}\tilde{\eps}.$$
Then 
$$\psi_{k+1}(v^{k+1}) = \min_{x \in \mathbb{R}^n} \left\{ \psi_{k}(x) + a_{k+1}\{f(y^k) + \langle \nabla f(y^k), x - y^k \rangle\} \right\} =$$
$$= \min_{x \in \mathbb{R}^n} \left\{ \psi_{k}(v^k) +\frac{1}{2}\|x-v^k\|_2^2 + a_{k+1}\{f(y^k) + \langle \nabla f(y^k), x - y^k \rangle\} \right\} \geqslant$$
$$\geqslant \min_{x \in \mathbb{R}^n} \left\{ A_{k}f(x^k) +\frac{1}{2}\|x-v^k\|_2^2 + a_{k+1}\{f(y^k) + \langle \nabla f(y^k), x - y^k \rangle\} -A_k\tilde{\eps} \right\}.$$
Due to the line 3 $f(y^k) \leqslant f(x^k)$ and $\langle \nabla f(y^k), v^k - y^k \rangle \geqslant -\tilde{\eps}$. From this two inequalities and the fact that $A_{k+1} = A_{k} +a_{k+1}$ one can obtain
$$\psi_{k+1}(v^{k+1}) \geqslant \min_{x \in \mathbb{R}^n} \left\{ A_{k+1}f(y^k) + a_{k+1}\langle \nabla f(y^k), x - v^k \rangle +\frac{1}{2}\|x-v^k\|_2^2-A_{k+1}\tilde{\eps}  \right\} = $$
$$= A_{k+1}f(y^k) - \frac{a_{k+1}^2}{2}\|\nabla f(y^k)\|_2^2-A_{k+1}\tilde{\eps}$$
So let's choose $a_{k+1}$ in such a way that guarantee 
\begin{equation}
A_{k+1}f(y^k) - \frac{a_{k+1}^2}{2}\|\nabla f(y^k)\|_2^2 = A_{k+1}f(x^{k+1}).
\end{equation}
This is quadratic equation on $a_{k+1}$. One can solve it explicitly.
For the method APDGD (see line 4) this equation means that $\frac{a_{k+1}^2}{A_{k+1}} \geqslant \frac{1}{L}$, which, combined with $A_{k+1} = A_{k} + a_{k+1}$, means that $a_k = O\left(\frac{k}{L}\right)$, $A_k = O\left(\frac{k^2}{L}\right)$.

\end{proof}

Of course, the same result applies to all the other version of the method presented further. This lemma leads to an additive term $\tilde{\eps}$ in the convergence bounds.

\section{Universal methods}
\label{S:univ}
%\pd{TODO: can we extend the method also for non-convex problems like in  \url{https://arxiv.org/pdf/1508.07384.pdf}?}
We consider the optimization problem \eqref{COnvProb}. By considering the class of objectives with H\"{o}lder continuous (sub)gradients we may generalize the above methods to non-smooth problems.

In this section we assume that the objective function has H\"{o}lder continuous (sub)gradients: for all $x, y \in \mathbb{R}^n$ and some $\nu\in\left[0,1\right]$
\begin{equation}\label{eq:hold_cond}
\|\nabla f(y) - \nabla f(x)\|_* \leqslant M_\nu \|x-y\|^\nu.
\end{equation}

Here if $\nu=0$ $\nabla f(x)$ denotes some subgradient of $f(x)$.

Again, we will be using the sequence of estimating functions defined as
$$\psi_0(x) = V(x,x^0).$$
$$l_k(x) = \sum_{i=0}^k a_{i+1}\{f(y^i) + \langle \nabla f(y^i), x - y^i \rangle\},$$
$$\psi_{k+1}(x) = l_{k}(x) + \psi_0(x) = \psi_{k}(x) + a_{k+1}\{f(y^k) + \langle \nabla f(y^k), x - y^k \rangle\},$$

$$A_{k+1} = A_{k} + a_{k+1}, \quad A_0 = 0.$$

\begin{algorithm}[!h]
\caption{Universal Accelerated Gradient Method with Small-Dimensional Relaxation (UAGMsDR)}
\label{UAGMsDR}
\begin{algorithmic}[1]
\REQUIRE Accuracy $\eps$
\ENSURE $x^k$
\STATE Set $k = 0$, $A_0=0$, $x^0 = v^0$, $\psi_0(x) = V(x,x^0)$
\FOR{$k \geqslant 0$}
\STATE \begin{equation}
    \label{eq:beta_k_y_k_def_ns}
    \beta_k = \arg\min_{\beta \in \left[0, 1 \right]} f\left(v^k + \beta (x^k - v^k)\right), \quad y^k = v^k + \beta_k (x^k - v^k).
\end{equation}
\STATE 

\begin{equation}
\label{eq:xkp1_ns}
h_{k+1} = \arg\min_{h \geqslant 0} f\left(y^k - h(\nabla f(y^k))^{\#}\right), \quad x^{k+1} = y^  k- h_{k+1}(\nabla f(y^k))^{\#}.
\end{equation}
Find $a_{k+1}$ from equation $f(y^k) - \frac{a_{k+1}^2}{2(A_{k} + a_{k+1})} \|\nabla f(y^k) \|_*^2  + \frac{\varepsilon a_{k+1}}{2(A_{k} + a_{k+1})} = f(x^{k+1})$.
\STATE  Set $A_{k+1} = A_{k} + a_{k+1}$.  
\STATE  Set $\psi_{k+1}(x) =  \psi_{k}(x) + a_{k+1}\{f(y^k) + \langle \nabla f(y^k), x - y^k \rangle\}$.
\STATE $v^{k+1} = \arg\min_{x \in E} \psi_{k+1}(x)$
\STATE $k = k + 1$
\ENDFOR
\end{algorithmic}
\end{algorithm}

\iffalse The equation in step 4 may be resolved analytically as \[a_{k+1}=\frac{-\delta_{k,\eps}+\sqrt{\delta^2_{k,\eps}-2A_k\delta_k\|\nabla f(y_k)\|_*^2}}{\|\nabla f(y_k)\|_*^2},\]

where $\delta_k=f(x^{k+1})-f(y^k)$, $\delta_{k,\eps}=\delta_k-\frac{\eps}{2}.$\fi
In the analysis of the above method we will be using a particular choice of the subgradient in step 4. However, it seems that in practice this is not important for the method's convergence.
All the results mentioned below remain correct if the line search domain $[0,1]$ in line 3 is changed to any larger subset of $\mathbb{R}$. 
Note that unlike other universal methods, (\cite{nesterov2015universal,guminov2017universal}) this method does not require estimating the step length in an inner cycle. This results in a slightly better complexity bound due to better step-lengths and lower iteration complexity.

The following lemma (the proof of which may be found in \cite{nesterov2015universal}) plays a major role in the convergence analysis of this method.
\begin{lemma}\label{inexact}
Let function $f(x)$ have H\"{o}lder continuous (sub)gradients for some $\nu\in[0,1]$ and $M_\nu<+\infty$. Then for any $\delta>0$ we have \[f(y)\leqslant f(x)+\langle\nabla f(x), y-x\rangle+\frac{M}{2}\|y-x\|^2+\frac{\delta}{2},\] where \[M=M\left(\delta,\nu, M_\nu\right)=\left[\frac{1-\nu}{1+\nu}\frac{M_\nu}{\delta}\right]^{\frac{1-\nu}{1+\nu}}M_\nu.\]

If the subgradient is not H\"{o}lder continuous for some exponent $\nu$ it is convenient to consider the corresponding $M_\nu$ to be equal to $+\infty$.
\end{lemma}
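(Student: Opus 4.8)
The plan is to reduce the statement to the standard Hölder growth estimate for the function value and then apply Young's inequality with a free scaling parameter, which is fixed at the end so as to produce exactly the claimed constant $M(\delta,\nu,M_\nu)$. This is essentially the argument of \cite{nesterov2015universal}, which we outline here for completeness; note that convexity and differentiability of $f$ are never used, only the Hölder bound \eqref{eq:hold_cond} on the (sub)gradient.

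First I would establish the integral form of \eqref{eq:hold_cond}: for any $x,y$,
\[
f(y) - f(x) - \langle \nabla f(x), y - x\rangle = \int_0^1 \langle \nabla f(x + t(y-x)) - \nabla f(x),\, y - x \rangle\, dt \leqslant \frac{M_\nu}{1+\nu}\|y-x\|^{1+\nu},
\]
where the inequality uses \eqref{eq:hold_cond} together with $\|t(y-x)\|^\nu = t^\nu\|y-x\|^\nu$ and $\int_0^1 t^\nu\,dt = \tfrac{1}{1+\nu}$. In the purely non-smooth case $\nu=0$ the same bound $f(y) \leqslant f(x) + \langle \nabla f(x), y-x\rangle + M_0\|y-x\|$ follows directly from $M_0$ being a uniform bound on the subgradient norms.

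Next I would dominate $\|y-x\|^{1+\nu}$ by a combination of $\|y-x\|^2$ and a constant. Writing $r=\|y-x\|$ and introducing $\rho>0$, Young's inequality $ab\leqslant \tfrac{a^p}{p}+\tfrac{b^q}{q}$ with $p=\tfrac{2}{1+\nu}$, $q=\tfrac{2}{1-\nu}$, $a=(\rho r^2)^{(1+\nu)/2}$, $b=\rho^{-(1+\nu)/2}$ (so that $ab=r^{1+\nu}$ and $\tfrac1p+\tfrac1q=1$) gives
\[
r^{1+\nu} \leqslant \frac{1+\nu}{2}\,\rho\, r^2 + \frac{1-\nu}{2}\,\rho^{-\frac{1+\nu}{1-\nu}}.
\]
Multiplying by $\tfrac{M_\nu}{1+\nu}$ and combining with the growth estimate yields
\[
f(y) \leqslant f(x) + \langle \nabla f(x), y-x\rangle + \frac{M_\nu\rho}{2}\|y-x\|^2 + \frac{M_\nu(1-\nu)}{2(1+\nu)}\,\rho^{-\frac{1+\nu}{1-\nu}}.
\]

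Finally I would choose $\rho$ so that the last term equals $\tfrac{\delta}{2}$, i.e.\ $\rho = \left(\tfrac{1-\nu}{1+\nu}\tfrac{M_\nu}{\delta}\right)^{\frac{1-\nu}{1+\nu}}$, which makes the quadratic coefficient $\tfrac{M_\nu\rho}{2} = \tfrac{M}{2}$ with $M = M_\nu\left(\tfrac{1-\nu}{1+\nu}\tfrac{M_\nu}{\delta}\right)^{\frac{1-\nu}{1+\nu}}$, exactly as claimed; the boundary cases $\nu=1$ (exponent $0$, $M=M_1$, the $\delta/2$ term superfluous) and $\nu=0$ ($M=M_0^2/\delta$) are recovered by inspection. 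The only mild obstacle is the bookkeeping in the Young's inequality step — matching the conjugate exponents and the powers of $\rho$ so that the product collapses to $r^{1+\nu}$ — but there is no genuine difficulty here.
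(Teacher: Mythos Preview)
Your argument is correct and is exactly the standard proof from \cite{nesterov2015universal}; the paper itself does not prove this lemma but simply refers the reader to that reference. The integral Hölder growth bound followed by Young's inequality with the scaling parameter $\rho$ chosen to match the additive $\delta/2$ term is precisely the intended route, and your bookkeeping with the conjugate exponents is right.
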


\begin{theorem}\label{universal-Ak-rate}
For the algorithm UAGMsDR and possibly non-convex  \eqref{COnvProb}, where $f(x)$ has H\"{o}lder continuous (sub)gradients,
\begin{equation}\label{eq:main_recurrence_universal}
    A_k f(x^k) \leqslant \min_{x \in \mathbb{R}^n} \psi_{k}(x) +\frac{A_{k}\eps}{2} = \psi_{k}(v^{k})+\frac{A_{k}\eps}{2}.
\end{equation}
and
\[A_k\geqslant\sup_{\nu\in[0,1]} \left[\frac{1+\nu}{1-\nu}\right]^\frac{1-\nu}{1+\nu}\frac{k^\frac{1+3\nu}{1+\nu}\eps^\frac{1-\nu}{1+\nu}}{2^\frac{1+3\nu}{1+\nu}M_\nu^\frac{2}{1+\nu}} \]
\end{theorem}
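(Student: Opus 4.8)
The plan is to mirror the structure of the proof of Theorem~\ref{Main}, replacing the exact smoothness inequality by the approximate one from Lemma~\ref{inexact} with the parameter $\delta$ chosen as a suitable fraction of $\eps$. First I would introduce $l_k$ and $\psi_{k+1}$ exactly as in the smooth case and prove \eqref{eq:main_recurrence_universal} by induction on $k$; the base case $k=0$ is immediate since $A_0=0$. For the induction step, using that $\psi_k$ is $1$-strongly convex with minimizer $v^k$ and the induction hypothesis $A_k f(x^k)\leqslant\psi_k(v^k)+\tfrac{A_k\eps}{2}$, I would lower-bound $\psi_{k+1}(v^{k+1})$ by the minimum over $x$ of $A_kf(x^k)+\tfrac12\|x-v^k\|^2+a_{k+1}\{f(y^k)+\langle\nabla f(y^k),x-y^k\rangle\}-\tfrac{A_k\eps}{2}$. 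The three-case analysis of the optimality conditions for $\beta_k$ again gives $\langle\nabla f(y^k),v^k-y^k\rangle\geqslant0$ and $f(y^k)\leqslant f(x^k)$, so after completing the square this is at least $A_{k+1}f(y^k)-\tfrac{a_{k+1}^2}{2}\|\nabla f(y^k)\|_*^2-\tfrac{A_k\eps}{2}$.

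It then remains to show the key step
\begin{equation*}
A_{k+1}f(y^k)-\frac{a_{k+1}^2}{2}\|\nabla f(y^k)\|_*^2-\frac{A_k\eps}{2}\geqslant A_{k+1}f(x^{k+1})-\frac{A_{k+1}\eps}{2},
\end{equation*}
which by dividing through by $A_{k+1}$ is exactly the defining equation for $a_{k+1}$ in step~4, namely $f(y^k)-\tfrac{a_{k+1}^2}{2A_{k+1}}\|\nabla f(y^k)\|_*^2+\tfrac{\eps a_{k+1}}{2A_{k+1}}=f(x^{k+1})$, after noting $A_{k+1}-A_k=a_{k+1}$. So I must show this equation admits a positive root. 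Applying Lemma~\ref{inexact} with $\delta=\tfrac{a_{k+1}}{A_{k+1}}\eps$ along the steepest-descent direction and minimizing in $h$ gives $f(x^{k+1})\leqslant f(y^k)-\tfrac{1}{2M(\delta,\nu,M_\nu)}\|\nabla f(y^k)\|_*^2+\tfrac{\delta}{2}$; rewriting the step-4 equation as a quadratic in $a_{k+1}$ with leading coefficient $\tfrac12\|\nabla f(y^k)\|_*^2>0$ (assuming $\nabla f(y^k)\neq0$, else $y^k$ is already a minimizer) and constant term $A_k(f(x^{k+1})-f(y^k))$, one checks via this bound that for the admissible $a_{k+1}$ the product of roots is negative (or uses the sign of the value at $a_{k+1}=0$), yielding a unique positive root. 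In particular, this argument also yields $\tfrac{a_{k+1}^2}{A_{k+1}}\geqslant\tfrac{1}{M(\delta,\nu,M_\nu)}$ with $\delta=\tfrac{a_{k+1}}{A_{k+1}}\eps$, i.e.
\begin{equation*}
\frac{a_{k+1}^2}{A_{k+1}}\geqslant\frac{1}{M_\nu}\left[\frac{1+\nu}{1-\nu}\frac{a_{k+1}\eps}{A_{k+1}M_\nu}\right]^{\frac{1-\nu}{1+\nu}}.
\end{equation*}

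For the growth rate of $A_k$, I would fix an arbitrary $\nu\in[0,1]$ with $M_\nu<+\infty$ and manipulate the last inequality to isolate a recurrence of the form $A_{k+1}^{p}-A_k\cdot A_{k+1}^{p-1}\geqslant c$ for an appropriate exponent; more precisely, from $\tfrac{a_{k+1}^2}{A_{k+1}}\geqslant C\,(a_{k+1}/A_{k+1})^{\frac{1-\nu}{1+\nu}}$ with $C=M_\nu^{-1}[\tfrac{(1+\nu)\eps}{(1-\nu)M_\nu}]^{\frac{1-\nu}{1+\nu}}$, one gets $a_{k+1}^{\frac{1+3\nu}{1+\nu}}\geqslant C\,A_{k+1}^{\frac{2\nu}{1+\nu}}$, hence $a_{k+1}\geqslant C^{\frac{1+\nu}{1+3\nu}}A_{k+1}^{\frac{2\nu}{1+3\nu}}$, and since $a_{k+1}=A_{k+1}-A_k$ this telescopes (comparing to an integral / using concavity of $t\mapsto t^{\frac{1+\nu}{1+3\nu}}$) to give $A_k\gtrsim (Ck)^{\frac{1+\nu}{1+3\nu}\cdot\frac{1+3\nu}{1+\nu}}$-type growth, and after carefully tracking constants one arrives at $A_k\geqslant[\tfrac{1+\nu}{1-\nu}]^{\frac{1-\nu}{1+\nu}}\tfrac{k^{(1+3\nu)/(1+\nu)}\eps^{(1-\nu)/(1+\nu)}}{2^{(1+3\nu)/(1+\nu)}M_\nu^{2/(1+\nu)}}$. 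Taking the supremum over $\nu\in[0,1]$ gives the stated bound. The main obstacle I anticipate is the bookkeeping in this last step: getting the exponents and the numerical constant $2^{(1+3\nu)/(1+\nu)}$ exactly right from the telescoping argument, and handling the boundary cases $\nu=0$ (where the bound degenerates to the $k^2/(4M_0^2/\eps)$ rate reminiscent of subgradient methods) and $\nu=1$ (where one recovers $A_k\geqslant k^2/(4L)$, matching Theorem~\ref{Main}, and the expression $[\tfrac{1+\nu}{1-\nu}]^{\frac{1-\nu}{1+\nu}}\to1$) as limits.
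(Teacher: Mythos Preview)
Your proposal is correct and follows essentially the same approach as the paper: the induction with the $\tfrac{A_k\eps}{2}$ shift, the three-case optimality analysis for $\beta_k$, the application of Lemma~\ref{inexact} with $\delta=\tfrac{a_{k+1}\eps}{A_{k+1}}$, and the derivation of the recurrence $a_{k+1}^{(1+3\nu)/(1+\nu)}\geqslant C\,A_{k+1}^{2\nu/(1+\nu)}$ are all exactly as done there. Your telescoping via concavity/integral comparison for $t\mapsto t^{\gamma}$ with $\gamma=\tfrac{1+\nu}{1+3\nu}\in[\tfrac12,1]$ is equivalent to the paper's algebraic identity $(A_{k+1}^{1-\gamma}+A_k^{1-\gamma})(A_{k+1}^\gamma-A_k^\gamma)\geqslant A_{k+1}-A_k$, and both yield $A_{k+1}^\gamma-A_k^\gamma\geqslant\tfrac{a_{k+1}}{2A_{k+1}^{1-\gamma}}$ and hence the same constant $2^{(1+3\nu)/(1+\nu)}$ after summation.
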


\begin{proof}
Denote
$$l_k(x) = \sum_{i=0}^k a_{i+1}\{f(y^i) + \langle \nabla f(y^i), x - y^i \rangle\}.$$
Then 
$$\psi_{k+1}(x) = l_{k}(x) + \psi_0(x) = \psi_{k}(x) + a_{k+1}\{f(y^k) + \langle \nabla f(y^k), x - y^k \rangle\}.$$
First, we prove inequality \eqref{eq:main_recurrence_universal} by induction over $k$. For $k=0$, the inequality holds. Assume that 
$$A_k f(x^k) \leqslant \min_{x \in \mathbb{R}^n} \psi_{k}(x) +\frac{A_{k}\eps}{2} = \psi_{k}(v^{k})+\frac{A_{k}\eps}{2}.$$
Then 
$$\psi_{k+1}(v^{k+1}) = \min_{x \in \mathbb{R}^n} \left\{ \psi_{k}(x) + a_{k+1}\{f(y^k) + \langle \nabla f(y^k), x - y^k \rangle\} \right\}  $$
$$  \geqslant \min_{x \in \mathbb{R}^n} \left\{ \psi_{k}(v^k) +\frac{1}{2}\|x-v^k\|^2 + a_{k+1}\{f(y^k) + \langle \nabla f(y^k), x - y^k \rangle\} \right\} $$
$$\geqslant \min_{x \in \mathbb{R}^n} \left\{ A_{k}f(x^k)-\frac{A_{k}\eps}{2} +\frac{1}{2}\|x-v^k\|^2 + a_{k+1}\{f(y^k) + \langle \nabla f(y^k), x - y^k \rangle\} \right\}.$$
Here we used that $\psi_{k}$ is a strongly convex function with minimum at $v^k$.

By the definition of $\beta_k$ and $y_k$ in \eqref{eq:beta_k_y_k_def_ns}, we have 
$f(y^k) \leqslant f(x^k)$. By the optimality conditions in \eqref{eq:beta_k_y_k_def_ns}, there exists such a subgradient $\nabla f(y^k)$ that either
\begin{enumerate}
    \item $\beta_k = 0$, $\langle \nabla f(y^k),x^k - v^k \rangle \geqslant 0$, $y^k = v^k$;
    \item $\beta_k \in (0,1)$ and $\langle \nabla f(y^k),x^k - v^k \rangle = 0$, $y^k = v^k + \beta_k (x^k - v^k)$;
    \item $\beta_k = 1$ and $\langle \nabla f(y^k),x^k - v^k \rangle \leqslant 0$, $y^k = x^k$ .
\end{enumerate}
In all three cases,  $\langle \nabla f(y^k), v^k - y^k \rangle \geqslant 0$. Thus,
%and \sg{$\beta_k = \arg\min _{\beta \in \left[0, 1 \right]} f\left(v^k + \beta (x^k - v^k)\right)$; $y^k = v^k + \beta_k (x^k - v^k)$. Then $f(y^k)\leqslant f(x^k)$, and either $\langle \nabla f(y^k), v^k - y^k \rangle=0$ from optimality conditions, or $\beta_k=0$ and $\langle \nabla f(y^k), v^k - y^k \rangle\geqslant0$ from optimality conditions, or $\beta_k=1$ and $v^k-y^k=0$}. From these two inequalities and the fact that $A_{k+1} = A_{k} +a_{k+1}$ one can obtain
$$\psi_{k+1}(v^{k+1}) \geqslant \min_{x \in \mathbb{R}^n} \left\{ A_{k+1}f(y^k)-\frac{A_{k}\eps}{2} + a_{k+1}\langle \nabla f(y^k), x - v^k \rangle +\frac{1}{2}\|x-v^k\|^2  \right\}  $$
$$\geqslant A_{k+1}f(y^k)-\frac{A_{k}\eps}{2} - \frac{a_{k+1}^2}{2}\|\nabla f(y^k)\|_*^2,$$
where we used that for any $g \in E^*$, $s \in E$, $\zeta \geqslant 0$, it holds that $\la g,s \ra + \frac{\zeta}{2}\|s\|^2 \geqslant - \frac{1}{2\zeta}\|g\|_*^2$. 

The equation 
\begin{equation} 
\label{eq:ind_fin_step_universal}
A_{k+1}f(y^k) - \frac{a_{k+1}^2}{2}\|\nabla f(y^k)\|_*^2 \geqslant A_{k+1}f(x^{k+1})-\frac{a_{k+1}\eps}{2},
\end{equation}
holds by the choice of $a_{k+1}$ from the equation
\begin{equation}
    \label{eq:Th:Main_proof_1_universal}
    f(y^k) - \frac{a_{k+1}^2}{2A_{k+1}} \|\nabla f(y^k) \|_*^2+\frac{a_{k+1}\eps}{2A_{k+1}} = f(x^{k+1}).
\end{equation}
It remains to show that this equation has a solution $a_{k+1} > 0$.
Applying \textbf{Lemma}~\ref{inexact} with $\delta=\frac{a_{k+1}\eps}{A_{k+1}}$, we have 
\begin{align}
    \label{eq:Th:Main_proof_2_universal}
f(x^{k+1}) &= \min_{h \geqslant 0} f\left(y^k - h(\nabla f(y^k))^{\#}\right) \leqslant \\ 
&\leqslant \min_{h \geqslant 0} \left( f(y^k) -h \langle \nabla f(y^k), (\nabla f(y^k))^{\#} \rangle + \frac{Mh^2}{2} \|(\nabla f(y^k))^{\#} \|^2 +\frac{a_{k+1}\eps}{2A_{k+1}}\right) \notag \\
 &= f(y^k) - \frac{1}{2M} \|\nabla f(y^k)\|_*^2+\frac{a_{k+1}\eps}{2A_{k+1}},
\end{align}
where $M=\left[\frac{1-\nu}{1+\nu}\frac{A_{k+1}M_\nu}{a_{k+1}\eps}\right]^{\frac{1-\nu}{1+\nu}}M_\nu$, and we used that $\langle \nabla f(y^k), (\nabla f(y^k))^{\#} \rangle = \|\nabla f(y^k) \|_*^2$ and $\|(\nabla f(y^k))^{\#} \|^2 = 1$ by definition of the vector $(\nabla f(y^k))^{\#}$. Since $A_{k+1}=A_k + a_{k+1}$, we can rewrite the equation \eqref{eq:Th:Main_proof_1_universal} as 
\[
\frac{a_{k+1}^2}{2} \|\nabla f(y^k) \|_*^2 + a_{k+1} \left(f(x^{k+1}) - f(y^k)-\frac{\eps}{2}\right) +  A_k(f(x^{k+1}) - f(y^k)) = 0.
\]
Since, by \eqref{eq:Th:Main_proof_2_universal}, $f(x^{k+1}) - f(y^k) < \frac{a_{k+1}\eps}{2A_{k+1}}$ (otherwise $\|\nabla f(y^k)\|_* = 0$ and $y_k$ is a solution to the problem \eqref{COnvProb}), at least one solution exists, and the greater one is
\[a_{k+1}=\frac{-(f(x^{k+1})-f(y^k)-\eps/2)+\sqrt{(f(x^{k+1})-f(y^k)-\eps/2)^2-2A_k(f(x^{k+1})-f(y^k))\|\nabla f(y_k)\|_*^2}}{\|\nabla f(y_k)\|_*^2}.\]

Let us estimate the rate of the growth for $A_k$. Using \eqref{eq:Th:Main_proof_1_universal} and \eqref{eq:Th:Main_proof_2_universal}, we have 
\[\frac{a^2_k}{A_k}\geqslant\frac{1}{M_\nu}\left[\frac{1+\nu}{1-\nu}\frac{\eps}{M_\nu}\right]^{\frac{1-\nu}{1+\nu}}\left[\frac{a_k}{A_k}\right]^{\frac{1-\nu}{1+\nu}},\]

or \[\frac{a^\frac{1+3\nu}{1+\nu}_k}{A^\frac{2\nu}{1+\nu}_k}\geqslant\frac{1}{M_\nu}\left[\frac{1+\nu}{1-\nu}\frac{\eps}{M_\nu}\right]^{\frac{1-\nu}{1+\nu}}\]

Denote $\gamma=\frac{1+\nu}{1+3\nu}\geqslant \frac{1}{2}.$ We have \begin{align*}
    (A_{k+1}^{1-\gamma}+A_k^{1-\gamma})(A_{k+1}^\gamma-A_k^\gamma)&=A_{k+1}-A_k+A_{k+1}^{\gamma}A_{k}^{1-\gamma}-A_k^\gamma A_{k+1}^{1-\gamma}=\\
    &=A_{k+1}-A_k+(A_{k+1}A_k)^{1-\gamma}(A_{k+1}^{2\gamma-1}-A_k^{2\gamma-1})\geqslant A_{k+1}-A_k.
\end{align*}Since $A_{k+1}=A_k+a_{k+1},$ 

\begin{equation}A^\gamma_{k+1}-A^\gamma_k\geqslant \frac{A_{k+1}-A_{k}}{A_{k+1}^{1-\gamma}+A_k^{1-\gamma}}\geqslant\frac{a_{k+1}}{2A_{k+1}^{1-\gamma}}\geqslant\frac{1}{2M_\nu^\frac{2}{1+3\nu}}\left[\frac{1+\nu}{1-\nu}\eps\right]^\frac{1-\nu}{1+3\nu} \label{A_k_universal}
\end{equation} Now we take a telescopic sum for $k=0,\ldots,N-1$ and get

%\[\bm{A}_1\geqslant\left[\frac{1+\nu}{1-\nu}\right]^\frac{1-\nu}{1+\nu}\frac{\eps^\frac{1-\nu}{1+\nu}}{2^\frac{1+3\nu}{1+\nu}M_\nu^\frac{2}{1+\nu}},\]
\begin{equation}
    A_N-A_0=A_N\geqslant\left[\frac{1+\nu}{1-\nu}\right]^\frac{1-\nu}{1+\nu}\frac{N^\frac{1+3\nu}{1+\nu}\eps^\frac{1-\nu}{1+\nu}}{2^\frac{1+3\nu}{1+\nu}M_\nu^\frac{2}{1+\nu}}.
\end{equation}
To get the statement of the theorem it remains to notice that the algorithm is independent of the level of smoothness $\nu$. Hence, if the objective has H\"{o}lder continuous gradient for multiple $\nu\in[0,1]$, then $A_k$ will grow according to the greatest lower bound. Thus, we have 
\[A_k\geqslant\sup_{\nu\in[0,1]} \left[\frac{1+\nu}{1-\nu}\right]^\frac{1-\nu}{1+\nu}\frac{k^\frac{1+3\nu}{1+\nu}\eps^\frac{1-\nu}{1+\nu}}{2^\frac{1+3\nu}{1+\nu}M_\nu^\frac{2}{1+\nu}}. \]

\end{proof}
The convergence rate of the above algorithm is given by the following theorem.

\begin{theorem}\label{u-main-result}
If $f(x)$ is convex (or 1-weakly-quasi-convex) and has H\"{o}lder continuous (sub)gradients, method\\ UAGMsDR generates $x_k$ such that \[f(x_k)-f(x_*)\leqslant \frac{1}{A_k}V(x_*,x^0)+\frac{\eps}{2}\]
An $\eps$-accurate iterate $x_T$ is obtained in the number of iterations
\[N\leqslant\inf_{\nu\in[0,1]}2^{\frac{2+4\nu}{1+3\nu}}\left[\frac{1-\nu}{1+\nu}\right]^\frac{1-\nu}{1+3\nu}\left[\frac{M_\nu}{\eps}\right]^\frac{2}{1+3\nu}\Theta^\frac{1+\nu}{1+3\nu},\]
where $V(x_*,x^0)\leqslant \Theta$

\end{theorem}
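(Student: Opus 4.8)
The plan is to proceed in two stages, mirroring the two assertions of the theorem.

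\textbf{First stage (the per-iterate accuracy bound).} I would start from the master inequality of Theorem~\ref{universal-Ak-rate}, namely $A_k f(x^k)\leqslant \psi_k(v^k)+\frac{A_k\eps}{2}=\min_{x\in\mathbb{R}^n}\psi_k(x)+\frac{A_k\eps}{2}$. Since the minimum is at most the value at $x_*$ and $\psi_k(x_*)=l_{k-1}(x_*)+V(x_*,x^0)$, it suffices to bound $l_{k-1}(x_*)$. This is where convexity (or $1$-weak-quasi-convexity) enters: for each $i$ one has $f(y^i)+\langle\nabla f(y^i),x_*-y^i\rangle\leqslant f(x_*)$, hence $l_{k-1}(x_*)\leqslant\big(\sum_{i=0}^{k-1}a_{i+1}\big)f(x_*)=A_k f(x_*)$. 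Combining gives $A_k f(x^k)\leqslant A_k f(x_*)+V(x_*,x^0)+\frac{A_k\eps}{2}$, and dividing by $A_k$ yields the claimed bound $f(x^k)-f(x_*)\leqslant\frac{1}{A_k}V(x_*,x^0)+\frac{\eps}{2}$. This is the direct analogue of Lemma~\ref{WQC_conv} with $\gamma=1$, carrying the extra $\eps/2$ slack coming from the universal estimating sequence.

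\textbf{Second stage (the iteration complexity).} Using $V(x_*,x^0)\leqslant\Theta$, the bound reads $f(x^k)-f(x_*)\leqslant\frac{\Theta}{A_k}+\frac{\eps}{2}$, so an $\eps$-accurate iterate is guaranteed as soon as $\frac{\Theta}{A_k}\leqslant\frac{\eps}{2}$, i.e. $A_k\geqslant\frac{2\Theta}{\eps}$. I would then substitute the lower bound on $A_k$ from Theorem~\ref{universal-Ak-rate}, for a fixed but arbitrary $\nu\in[0,1]$ with $M_\nu<+\infty$, into this requirement and solve the resulting inequality for $k$: isolate $k^{\frac{1+3\nu}{1+\nu}}$ on one side, then raise both sides to the power $\frac{1+\nu}{1+3\nu}$. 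The exponents collapse via the identities $1+\frac{1+3\nu}{1+\nu}=\frac{2+4\nu}{1+\nu}$ and $1+\frac{1-\nu}{1+\nu}=\frac{2}{1+\nu}$, producing exactly $N\leqslant 2^{\frac{2+4\nu}{1+3\nu}}\left[\frac{1-\nu}{1+\nu}\right]^{\frac{1-\nu}{1+3\nu}}\left[\frac{M_\nu}{\eps}\right]^{\frac{2}{1+3\nu}}\Theta^{\frac{1+\nu}{1+3\nu}}$. Since the algorithm uses neither $\nu$ nor $M_\nu$, the same iterate is $\eps$-accurate for every admissible $\nu$ simultaneously, so one takes the infimum over $\nu\in[0,1]$ to obtain the stated bound.

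Conceptually both steps are routine: the first is a one-line combination of the previous theorem with (weak) convexity, and the second is an elementary inversion of a power-law lower bound. The only place that demands care is the bookkeeping of fractional exponents in the second stage — checking that the powers of $2$, $\eps$, $M_\nu$ and $\Theta$ recombine correctly after raising to $\frac{1+\nu}{1+3\nu}$ — but this is purely mechanical. As a sanity check I would verify the limiting cases: at $\nu=0$ the formula recovers the $O\big((M_0/\eps)^2\Theta\big)$ subgradient-type rate, and as $\nu\to1$ the factor $\left[\frac{1-\nu}{1+\nu}\right]^{\frac{1-\nu}{1+3\nu}}\to1$ and the exponent of $M_\nu/\eps$ tends to $\tfrac12$, recovering the smooth accelerated rate, confirming the interpolation between the non-smooth and smooth regimes.
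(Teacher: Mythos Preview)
Your proposal is correct and follows essentially the same route as the paper: first combine the master inequality of Theorem~\ref{universal-Ak-rate} with $1$-weak-quasi-convexity to bound $l_{k-1}(x_*)\leqslant A_k f(x_*)$ and obtain the per-iterate bound, then invert the power-law lower bound on $A_k$ to extract the iteration count and take the infimum over $\nu$. The exponent bookkeeping you outline (via $1+\frac{1+3\nu}{1+\nu}=\frac{2+4\nu}{1+\nu}$ and $1+\frac{1-\nu}{1+\nu}=\frac{2}{1+\nu}$) is exactly what the paper does implicitly.
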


\begin{proof}
According to the definition of $1$-weak-quasi-convexity 
$$l_k(x_*) = \sum_{i=0}^k a_{i+1}\{f(y^i) + \langle \nabla f(y^i), x_* - y^i \rangle\}\leqslant
$$
$$
\leqslant \sum_{i=0}^k a_{i+1}f(x_*)=A_{k+1}f(x_*).$$
By \eqref{eq:xkp1_ns} and \eqref{eq:beta_k_y_k_def_ns} we have $f(y^i)\leqslant f(x^i)\leqslant f(y^{i-1})\leqslant\ldots \leqslant f(x^0)$, so

 \[l_k(x_*) \leqslant \sum_{i=0}^k a_{i+1}\{(1 - \gamma)f(x^0) + \gamma f(x_*)\}.\]
From this inequality and \textbf{Theorem} \ref{universal-Ak-rate} we have
$$A_{k} f(x_{k}) \leqslant \min_{x \in \mathbb{R}^n} \psi_{k}(x) +\frac{A_k\eps}{2}\leqslant \psi_{k}(x_*)+\frac{A_k\eps}{2} = 
$$
$$
=l_{k-1}(x_*) + V(x_*,x^0)+\frac{A_k\eps}{2} \leqslant A_kf(x_*)+ V(x_*,x^0)+\frac{A_k\eps}{2}.$$
From here by rearranging the terms we obtain the first part of the statement of the theorem:
\[f(x_k)-f(x_*)\leqslant \frac{1}{A_k}V(x_*,x^0)+\frac{\eps}{2}.\] To get the seconds part we need to find $N$ such that the following bound holds:

\[\frac{1}{A_T}V(x_*,x^0)+\frac{\eps}{2}\leqslant\eps.\]

We have that $\forall \nu\in[0,1]$ (with possibly infinite $M_\nu$) \[A_T\geqslant \left[\frac{1+\nu}{1-\nu}\right]^\frac{1-\nu}{1+\nu}\frac{N^\frac{1+3\nu}{1+\nu}\eps^\frac{1-\nu}{1+\nu}}{2^\frac{1+3\nu}{1+\nu}M_\nu^\frac{2}{1+\nu}},\] so it is sufficient to guarantee \[\left[\frac{1+\nu}{1-\nu}\right]^\frac{1-\nu}{1+\nu}\frac{N^\frac{1+3\nu}{1+\nu}\eps^\frac{1-\nu}{1+\nu}}{2^\frac{1+3\nu}{1+\nu}M_\nu^\frac{2}{1+\nu}}\geqslant \frac{2V(x_*,x^0)}{\eps}, \] so after \[N\geqslant2^{\frac{2+4\nu}{1+3\nu}}\left[\frac{1-\nu}{1+\nu}\right]^\frac{1-\nu}{1+3\nu}\left[\frac{M_\nu}{\eps}\right]^\frac{2}{1+3\nu}\Theta^\frac{1+\nu}{1+3\nu}\] iterations accuracy $\eps$ is guaranteed. It remains to see that infimum over $\nu$ may be taken in this upper bound, since the method does not have $\nu$ as a parameter.
\end{proof}

This method also converges to a stationary point for non-convex objectives. %\pd{For the analysis of non-convex case, maybe this will help \url{https://arxiv.org/abs/1703.09180}. See proof of Theorem 1 and Corollary 2. I can give the tex sources.}
\begin{theorem}
\label{Th:GradDecayNonConv_universal}
Let function $f$ be $L$-smooth and algorithm UAGMsDR be run for $N$ steps. Then
\[\min_{k=0,\ldots, N-1}\|\nabla f(y^k)\|^2_2\leqslant \frac{2L(f(x^0)-f(x_*))}{N}+L\eps.\]
\end{theorem}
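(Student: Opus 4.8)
The plan is to mimic the argument of Theorem~\ref{Th:GradDecayNonConv}, but keeping track of the additional $\eps$-terms introduced in the universal method. First I would observe that for an $L$-smooth objective, Lemma~\ref{inexact} with $\nu=1$, $M_1=L$ holds for every $\delta>0$ (the inequality is just the standard descent lemma with an extra $\frac{\delta}{2}$), so in particular the exact line search in \eqref{eq:xkp1_ns} gives, after minimizing over $h$ and taking $\delta\to 0$,
\[
f(x^{k+1})\leqslant f(y^k)-\frac{1}{2L}\|\nabla f(y^k)\|_*^2.
\]
Alternatively, one can keep $\delta=\frac{a_{k+1}\eps}{A_{k+1}}$ as in \eqref{eq:Th:Main_proof_2_universal}, but the cleanest route is to note that steepest descent does at least as well as a single gradient step with stepsize $1/L$, which yields exactly the displayed inequality without any $\eps$ correction.

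Given this per-step decrease, I would chain $f(x^{k+1})\leqslant f(y^k)\leqslant f(x^k)$ (the second inequality from the definition of $\beta_k,y^k$ in \eqref{eq:beta_k_y_k_def_ns}), so that the sequence $f(x^k)$ is monotone non-increasing and
\[
f(x^{k})-f(x^{k+1})\geqslant \frac{1}{2L}\|\nabla f(y^k)\|_*^2.
\]
Summing this telescopically for $k=0,\dots,N-1$ gives
\[
f(x^0)-f(x_*)\geqslant f(x^0)-f(x^N)\geqslant \frac{1}{2L}\sum_{k=0}^{N-1}\|\nabla f(y^k)\|_*^2\geqslant \frac{N}{2L}\min_{k=0,\dots,N-1}\|\nabla f(y^k)\|_*^2,
\]
whence $\min_{k}\|\nabla f(y^k)\|_*^2\leqslant \frac{2L(f(x^0)-f(x_*))}{N}$. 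Strictly speaking this already proves a slightly stronger statement than claimed, so the extra $L\eps$ in the theorem is slack (it presumably appears because the authors keep the $\delta=\frac{a_{k+1}\eps}{A_{k+1}}$ bound from \eqref{eq:Th:Main_proof_2_universal} rather than sending $\delta\to 0$); I would either present the sharper bound or, to match the statement verbatim, carry the $+\frac{a_{k+1}\eps}{2A_{k+1}}\leqslant \frac{\eps}{2}$ term through the telescoping sum, which contributes at most $\frac{N\eps}{2}$ and hence an extra $L\eps$ after dividing by $N/(2L)$ and taking the minimum.

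The only subtle point — and the ``main obstacle'' such as it is — is justifying the per-step descent inequality from the \emph{exact} steepest-descent step \eqref{eq:xkp1_ns} in the non-Euclidean setting: one needs $f(x^{k+1})\leqslant \min_{h\geqslant 0} f(y^k-h(\nabla f(y^k))^{\#})\leqslant f(y^k)-\frac{1}{2L}\|\nabla f(y^k)\|_*^2$, which is exactly \eqref{eq:Th:Main_proof_2_universal} specialized to $\nu=1$ using $\langle\nabla f(y^k),(\nabla f(y^k))^{\#}\rangle=\|\nabla f(y^k)\|_*^2$ and $\|(\nabla f(y^k))^{\#}\|=1$. Everything else is the routine telescoping argument above.
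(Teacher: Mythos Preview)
Your proposal is correct and, in its second route (carrying the $\frac{a_{k+1}\eps}{2A_{k+1}}$ term through the telescoping sum and bounding $\sum_{k}\frac{a_{k+1}}{A_{k+1}}\leqslant N$), coincides exactly with the paper's proof. Your first route is also valid and genuinely sharper: since the hypothesis here is $L$-smoothness, the exact steepest-descent step \eqref{eq:xkp1_ns} already gives $f(x^{k+1})\leqslant f(y^k)-\frac{1}{2L}\|\nabla f(y^k)\|_*^2$ with no $\eps$ correction, so the $L\eps$ term in the stated bound is indeed slack; the paper simply reuses the universal estimate \eqref{eq:Th:Main_proof_2_universal} rather than specializing to $\nu=1$.
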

\begin{proof}
We have that \begin{equation}
f(x^{k+1})\leqslant f(y^k)-\frac{1}{2L}\|\nabla f(y^k)\|^2_*+\frac{a_{k+1}\eps}{2A_{k+1}}\leqslant f(x^k)-\frac{1}{2L}\|\nabla f(y^k)\|^2_*+\frac{a_{k+1}\eps}{2A_{k+1}}. \label{grad_descent_guarantee_ns}
\end{equation}

Summing this up for $k=0,\ldots, N$, we obtain 

\[f(x^0)-f(x_*)\geqslant f(x^{0})-f(x^{N+1})\geqslant \frac{N}{2L}\min_{k=0,\ldots, N}\|\nabla f(y^k)\|^2_*-\sum_{k=0}^{N-1}\frac{a_{k+1}}{A_{k+1}}\frac{\eps}{2}\geqslant\frac{N}{2L}\min_{k=0,\ldots, N}\|\nabla f(y^k)\|^2_*-\frac{N\eps }{2}.\]
Consequently, we may guarantee \[\min_{k=0,\ldots, N-1}\|\nabla f(y^k)\|^2_2\leqslant \frac{2L(f(x^0)-f(x_*))}{N}+L\eps.\]
\end{proof}
\subsubsection{Online stopping criterion}
%\pd{TODO: rewrite for the simple case of minimization of smooth functions.}

This method also has an efficient stopping criterion, provided the objective is convex.

By rewriting the statement of \textbf{Theorem~\ref{u-main-result}} we see that \[f(x^k)\leqslant\frac{\eps}{2}+\frac{1}{A_k}\psi_k(v^k)=\frac{\eps}{2}+\frac{1}{A_k}\min_{x\in\mathbb{R}^n}\left[\frac{1}{2}\|x^0-x\|^2_2+\sum_{i=0}^{k-1} a_{i+1}\left\{f(y^i) + \langle \nabla f(y^i), x - y^i \rangle\right\}\right]\]

As it was before, \[l^{k-1}(x)=\sum_{i=0}^{k-1} a_{i+1}\left\{f(y^i) + \langle \nabla f(y^i), x - y^i \rangle+\frac{\mu}{2}\|x-y^i\|^2_2\right\}\]Denote $R=\|x^0-x_*\|_2$   and \[\hat{f}^k=\min_{x:\ \|x-x_0\|\leqslant R} \frac{1}{A_k}l^{k-1}(x).\]

The constraint may be rewritten equivalently  as $\frac{1}{2}\|x-x_0\|^2\leqslant \frac{R^2}{2}$. By strong duality we see that \begin{align*}
\hat{f}^k&=\min_{x\in \mathbb{R}^n}\max_{\lambda\geqslant 0}\left\lbrace\frac{1}{A_k} l^{k-1}(x)+\lambda\left(\frac{1}{2}\|x_0-x\|^2-\frac{R^2}{2}\right)\right\rbrace\\&=\max_{\lambda\geqslant 0} \min_{x\in \mathbb{R}^n}\left\lbrace\frac{1}{A_k} l^{k-1}(x)+\lambda\left(\frac{1}{2}\|x_0-x\|^2-\frac{R^2}{2}\right)\right\rbrace.
\end{align*}

Now we set $\lambda=\frac{1}{A_k}$ and obtain \[\hat{f}^k\geqslant\frac{1}{A_k}\psi_k(v^k)-\frac{R^2}{2A_k}. \]

Then \[f(x^k)-\hat{f}^k\leqslant\frac{R^2}{2A_k}+\frac{\eps}{2}.\] But by convexity of $f(x)$ we have that $\forall k$ $\frac{1}{A_k}l^{k-1}(x)\leqslant f(x)$, which implies that $\hat{f}^k\leqslant f(x_*).$  Finally, we have

\[f(x^k)-f(x_*)\leqslant f(x^k)-\hat{f}^k\leqslant\frac{R^2}{2A_k}+\frac{\eps}{2},\] so the condition $f(x^k)-\hat{f}^k\leqslant \frac{\eps}{2}$ implies $f(x^k)-f(x_*)\leqslant \eps$ and is an efficient stopping criterion for the UAGMsDR method.

\subsection{Non-smooth strongly convex objectives}

It remains to combine the two ideas used previously into a method for the case of non-smooth strongly convex objective. However, while there exist functions which are globally both $L$-smooth and $\mu$-strongly convex, this is not the case for objectives with H\"{o}lder continuous gradients. Indeed, no function satisfies \[f(x)+\langle f(x),y-x\rangle+\frac{\mu}{2}\|y-x\|_2^{2}\leqslant f(y)\leqslant f(x)+\langle f(x),y-x\rangle+\frac{M_\nu}{1+\nu}\|y-x\|_2^{1+\nu}
\] for all $x,y\in\mathbb{R}^n$. However, we have already established that our method converges monotonously. Since strongly convex functions have bounded sublevel sets, we only need this pair of inequalities to hold true for $\forall x,y\in\mathcal{L}_f(f(x^0))=\{x|f(x)\leqslant f(x^0)\}.$ 

\begin{algorithm}[!h]
\caption{Universal Accelerated Gradient Method with Small-Dimensional Relaxation (UAGMsDR)}
\label{SCUAGMsDR}
\begin{algorithmic}[1]
\REQUIRE Accuracy $\eps$
\ENSURE $x^k$
\STATE Set $k = 0$, $A_0=0$, $x^0 = v^0$, $\psi_0(x) = \frac{1}{2}\|x_0-x_*\|^2_2$
\FOR{$k \geqslant 0$}
\STATE \begin{equation}
    \label{eq:beta_k_y_k_def_ns_sc}
    \beta_k = \arg\min_{\beta \in \left[0, 1 \right]} f\left(v^k + \beta (x^k - v^k)\right), \quad y^k = v^k + \beta_k (x^k - v^k).
\end{equation}
\STATE 

\begin{equation}
\label{eq:xkp1_opt_b_ns_sc}
h_{k+1} = \arg\min_{h \geqslant 0} f\left(y^k - h(\nabla f(y^k))^{\#}\right), \quad x^{k+1} = y^  k- h_{k+1}(\nabla f(y^k))^{\#}.
\end{equation}
Find $a_{k+1}$ from equation $f(y^k) - \frac{a_{k+1}^2}{2(A_{k} + a_{k+1})} \|\nabla f(y^k) \|_2^2+\frac{\mu\tau_k a_{k+1}}{2(\tau_{k}+\mu a_{k+1})(A_k+a_{k+1})}\|v^k-y^k\|_2^2 + \frac{\varepsilon a_{k+1}}{2(A_{k} + a_{k+1})} = f(x^{k+1})$.
\STATE  Set $A_{k+1} = A_{k} + a_{k+1}$.  
\STATE  Set $\psi_{k+1}(x) =  \psi_{k}(x) + a_{k+1}\{f(y^k) + \langle \nabla f(y^k), x - y^k \rangle+\frac{\mu}{2}\|x-y^k\|_2^2\}$.
\STATE $v^{k+1} = \arg\min_{x \in E} \psi_{k+1}(x)$
\STATE $k = k + 1$
\ENDFOR
\end{algorithmic}
\end{algorithm}
\iffalse
Here line 7 may be written down in closed form:

\[a_{k+1}=\frac{-\delta_{k,\eps}\tau_k-\mu A_k\delta_k+\mu\tau_k\|v^k-y^k\|^2+\sqrt{(\delta_{k,\eps}\tau_k+\mu A_k\delta_k-\mu\tau_k\|v^k-y^k\|^2)^2-2A_k\delta_k\tau_k(2\delta_{k,\eps}\mu+\|\nabla f(y_k)\|^2)}}{2\delta_{k,\eps}\mu+\|\nabla f(y_k)\|^2},\]

where $\delta_k=f(x^{k+1})-f(y^k)$, $\delta_{k,\eps}=\delta_k-\frac{\eps}{2}.$

\[\forall x,y\in{\mathbb{R}^n}\quad f(y)\geqslant f(x)+\la\nabla f(x),y-x\ra+\frac{\mu}{2}\|y-x\|^2_2.\]

\[\forall x,y\in B_r(x_*)\quad \|\nabla f(y)-\nabla f(x)\|\leqslant M_\nu \|y-x\|^\nu.\]

If $\mu$ is set to 0, the SCUAPDLSGD method becomes identical to the UAPDLSGD method.
Note that if the objective is $\mu$-strongly convex, it is true that $\forall x\in\mathbb{R}^n$ $f(x)-f(x_\ast)\leqslant \frac{1}{2\mu}\|\nabla f(x)\|^2$.

Hence, \[2\delta_{k,\eps}\mu+\|\nabla f(y_k)\|^2\geqslant-\frac{\eps}{2}+f(x^{k+1})-f(x_*).\]

This means that $a_{k+1}$ may only be negative or undefined if the method has already generated an $\frac{\eps}{2}$-accurate solution. 
\fi
\begin{theorem}\label{universal-Ak-rate_sc}
For the algorithm UAGMsDR and $\mu$-strongly convex $f(x)$ with H\"{o}lder continuous (sub)gradients,
\begin{equation}\label{eq:main_recurrence_sc_ns}
    A_k f(x^k) \leqslant \min_{x \in \mathbb{R}^n} \psi_{k}(x) +\frac{A_{k}\eps}{2} = \psi_{k}(v^{k})+\frac{A_{k}\eps}{2}
\end{equation}
and
\begin{equation}\label{eq:A_k_max_rate}
    A_k\geqslant\max\left\lbrace \left[\frac{1+\nu}{1-\nu}\right]^\frac{1-\nu}{1+\nu}\frac{k^\frac{1+3\nu}{1+\nu}\eps^\frac{1-\nu}{1+\nu}}{2^\frac{1+3\nu}{1+\nu}M_\nu^\frac{2}{1+\nu}},\   \frac{1}{M_\nu}\left[\frac{1+\nu}{1-\nu}\frac{\eps}{M_\nu}\right]^{\frac{1-\nu}{1+\nu}}\left(1-M_\nu^{-\frac{1+\nu}{1+3\nu}}\left[\frac{1+\nu}{1-\nu}\frac{\eps}{M_\nu}\right]^{\frac{1-\nu}{1+3\nu}}\mu^\frac{1+\nu}{1+3\nu}\right)^{-(k-1)}\right\rbrace.
\end{equation}

Furthermore, an  $\eps$-accurate iterate $x_T$ is obtained in the number of iterations
\begin{equation}
    \label{eq:sc_ns_iterations}
    N\leqslant\inf_{\nu\in[0,1]}\min\left\lbrace2\left[\frac{1-\nu}{1+\nu}\right]^\frac{1-\nu}{1+3\nu}\left[\frac{M_\nu}{\eps}\right]^\frac{2}{1+3\nu}R^\frac{2+2\nu}{1+3\nu},
\frac{M_\nu^{\frac{2}{1+3\nu}}}{\eps^{\frac{1-\nu}{1+3\nu}}\mu^{\frac{1+\nu}{1+3\nu}}}\ln{\left(\left[\frac{1-\nu}{1+\nu}\right]^{\frac{1-\nu}{1+\nu}}\frac{M_\nu^{\frac{2}{1+\nu}}R^2}{\eps^{\frac{2}{1+\nu}}}\right)}\right\rbrace,
\end{equation}
where $\|x^0-x_*\|\leqslant R$.
\end{theorem}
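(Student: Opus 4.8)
The proof combines two analyses already carried out in the paper: the universal (H\"older) estimating-sequence argument of Theorem~\ref{universal-Ak-rate} and the strongly convex estimating-sequence argument of Theorem~\ref{Main_SC}. Since $f$ is $\mu$-strongly convex, its sublevel set $\mathcal{L}_f(f(x^0))$ is bounded, and the line searches in steps~3--4 of Algorithm~\ref{SCUAGMsDR} force $f(x^{k+1})\leqslant f(y^k)\leqslant f(x^k)$, so all iterates remain in $\mathcal{L}_f(f(x^0))$ and Lemma~\ref{inexact} may be applied with the constant $M_\nu$ throughout.

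First I would establish \eqref{eq:main_recurrence_sc_ns} by induction over $k$, following the scheme of Theorem~\ref{Main_SC}. The terms $\tfrac{\mu}{2}\|x-y^i\|^2$ make $\psi_k$ $\tau_k$-strongly convex with $\tau_k=1+\mu A_k$, and the three optimality cases for $\beta_k$ again give $\langle\nabla f(y^k),v^k-y^k\rangle\geqslant 0$. Minimizing the arising quadratic in $x$ in closed form (the minimizer being $\tfrac{1}{\tau_{k+1}}(\tau_k v^k+\mu a_{k+1}y^k-a_{k+1}\nabla f(y^k))$) yields
\[
\psi_{k+1}(v^{k+1})\geqslant A_{k+1}f(y^k)-\frac{a_{k+1}^2}{2\tau_{k+1}}\|\nabla f(y^k)\|_*^2+\frac{\mu\tau_k a_{k+1}}{2\tau_{k+1}}\|v^k-y^k\|^2-\frac{A_k\eps}{2},
\]
so the induction step is exactly the defining equation for $a_{k+1}$ in step~4, combined with the slack bookkeeping $\tfrac{A_k\eps}{2}+\tfrac{a_{k+1}\eps}{2}=\tfrac{A_{k+1}\eps}{2}$ as in Theorem~\ref{universal-Ak-rate}. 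To see that this equation has a root $a_{k+1}>0$ I would substitute $A_{k+1}=A_k+a_{k+1}$ and $\tau_{k+1}=\tau_k+\mu a_{k+1}$ to reduce it to a quadratic in $a_{k+1}$; Lemma~\ref{inexact} with $\delta=\tfrac{a_{k+1}\eps}{A_{k+1}}$ controls the sign of the constant term, while $\mu$-strong convexity, via $f(y^k)-f(x_*)\leqslant\tfrac{1}{2\mu}\|\nabla f(y^k)\|_*^2$, guarantees that the leading coefficient is nonnegative unless an $\tfrac\eps2$-accurate point has already been produced.

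For the lower bound \eqref{eq:A_k_max_rate}, the polynomial entry is obtained verbatim from Theorem~\ref{universal-Ak-rate}: the defining equation and Lemma~\ref{inexact} give $a_{k+1}^2/A_{k+1}\geqslant 1/M$ with $M=M(\tfrac{a_{k+1}\eps}{A_{k+1}},\nu,M_\nu)$, and telescoping $A_{k+1}^{\gamma}-A_k^{\gamma}$ with $\gamma=\tfrac{1+\nu}{1+3\nu}$ gives the first term of the maximum. The geometric entry is the genuinely new part: the closed-form minimization above in fact delivers the stronger relation $a_{k+1}^2/(\tau_{k+1}A_{k+1})\geqslant 1/M$, and since $\tau_{k+1}\geqslant\mu A_{k+1}$ this gives $a_{k+1}^2\geqslant(\mu/M)A_{k+1}^2$; substituting $M=\bigl[\tfrac{1-\nu}{1+\nu}\tfrac{A_{k+1}M_\nu}{a_{k+1}\eps}\bigr]^{\frac{1-\nu}{1+\nu}}M_\nu$ and solving for $a_{k+1}$ gives $a_{k+1}\geqslant c\,A_{k+1}$ with $c=\mu^{\frac{1+\nu}{1+3\nu}}M_\nu^{-\frac{1+\nu}{1+3\nu}}\bigl[\tfrac{1+\nu}{1-\nu}\tfrac{\eps}{M_\nu}\bigr]^{\frac{1-\nu}{1+3\nu}}$, hence $A_{k+1}\geqslant(1-c)^{-1}A_k$; recursing from $A_1\geqslant\tfrac{1}{M_\nu}\bigl[\tfrac{1+\nu}{1-\nu}\tfrac{\eps}{M_\nu}\bigr]^{\frac{1-\nu}{1+\nu}}$ produces the second term, and $A_k$ dominates both because the algorithm is independent of $\nu$ and of the regime.

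Finally, for the complexity bound \eqref{eq:sc_ns_iterations} I would first re-derive convergence: strong convexity gives $l_{k-1}(x_*)\leqslant A_kf(x_*)$, so \eqref{eq:main_recurrence_sc_ns} together with $\psi_0(x)=\tfrac12\|x^0-x_*\|^2$ yields $f(x^k)-f(x_*)\leqslant \tfrac{R^2}{2A_k}+\tfrac\eps2$; requiring $A_k\geqslant R^2/\eps$ and inserting the two lower bounds on $A_k$ gives the two branches of the minimum — the polynomial one reproduces the bound of Theorem~\ref{u-main-result} with $\Theta\asymp R^2$, and the geometric one gives $k-1\leqslant c^{-1}\ln\!\bigl(\tfrac{R^2}{\eps}\cdot\tfrac{1}{(\text{leading coefficient})}\bigr)$ via $\ln\tfrac{1}{1-c}\geqslant c$, where $c^{-1}\asymp M_\nu^{2/(1+3\nu)}\eps^{-(1-\nu)/(1+3\nu)}\mu^{-(1+\nu)/(1+3\nu)}$ and the logarithm collapses to the displayed argument; the infimum over $\nu\in[0,1]$ is admissible since the method has no $\nu$ as a parameter. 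The main obstacle I anticipate is the induction step: unlike in the smooth strongly convex case, the equation for $a_{k+1}$ couples the quadratic term, the $\|v^k-y^k\|^2$ term, and the self-referential H\"older slack $\delta=a_{k+1}\eps/A_{k+1}$ all at once, and one must check carefully that after the substitution it remains a genuine quadratic in $a_{k+1}$ with a positive root and that positivity of its leading coefficient can fail only past the target accuracy; everything else is a recombination of arguments already present in the paper.
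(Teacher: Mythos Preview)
Your proposal is correct and follows essentially the same route as the paper's proof: induction for \eqref{eq:main_recurrence_sc_ns} via $\tau_k$-strong convexity of $\psi_k$, the three optimality cases, and explicit quadratic minimization; existence of $a_{k+1}>0$ via the strong-convexity bound $f(y^k)-f(x_*)\leqslant\tfrac{1}{2\mu}\|\nabla f(y^k)\|_2^2$ on the leading coefficient; then the key growth relation $a_{k+1}^2/(\tau_{k+1}A_{k+1})\geqslant 1/M$ from Lemma~\ref{inexact}, used with $\tau_{k+1}\geqslant 1$ (telescoping, polynomial bound) and with $\tau_{k+1}\geqslant\mu A_{k+1}$ (recursion, geometric bound), exactly as the paper does. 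The complexity argument from $f(x^k)-f(x_*)\leqslant R^2/(2A_k)+\eps/2$ and $\ln(1-c)\leqslant -c$ is also the same. One cosmetic point: what you call ``the stronger relation'' $a_{k+1}^2/(\tau_{k+1}A_{k+1})\geqslant 1/M$ is the \emph{only} relation the defining equation of Algorithm~\ref{SCUAGMsDR} actually yields (the $\tau_{k+1}$ in the denominator is present in the proof's equation \eqref{eq:Th:Main_sc_ns_proof_1}, though the algorithm's display omits it, presumably a typo), so your weaker $a_{k+1}^2/A_{k+1}\geqslant 1/M$ is obtained from it via $\tau_{k+1}\geqslant 1$ rather than directly.
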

\begin{proof}
Same as before, denote
$$l_k(x) = \sum_{i=0}^k a_{i+1}\{f(y^i) + \langle \nabla f(y^i), x - y^i \rangle +\frac{\mu}{2}\|x-y^k\|^2\}.$$

First, we prove inequality \eqref{eq:main_recurrence_sc_ns} by induction over $k$. For $k=0$, the inequality holds. Assume that 
$$A_{k}f(x^{k}) \leqslant \min_{x \in \mathbb{R}^n} \psi_{k}(x)+\frac{A_{k}\eps}{2} = \psi_{k}(v^{k}) +\frac{A_{k}\eps}{2}.$$
Then 
$$\psi_{k+1}(v^{k+1}) = \min_{x \in \mathbb{R}^n} \left\{ \psi_{k}(x) + a_{k+1}\{f(y^k) + \langle \nabla f(y^k), x - y^k \rangle+\frac{\mu}{2}\|x-y^k\|^2\} \right\}  $$
$$  \geqslant \min_{x \in \mathbb{R}^n} \left\{ \psi_{k}(v^k) +\frac{\tau_k}{2}\|x-v^k\|^2 + a_{k+1}\{f(y^k) + \langle \nabla f(y^k), x - y^k \rangle+\frac{\mu}{2}\|x-y^k\|^2\} \right\} $$
$$\geqslant \min_{x \in \mathbb{R}^n} \left\{ A_{k}f(x^k)-\frac{A_{k}\eps}{2} +\frac{\tau_k}{2}\|x-v^k\|^2 + a_{k+1}\{f(y^k) + \langle \nabla f(y^k), x - y^k \rangle+\frac{\mu}{2}\|x-y^k\|^2\} \right\}.$$
Here we used that $\psi_{k}$ is a $\tau_k$-strongly convex function with minimum at $v^k$.

By the definition of $\beta_k$ and $y_k$ in \eqref{eq:beta_k_y_k_def_sc}, we have 
$f(y^k) \leqslant f(x^k)$. By the optimality conditions in \eqref{eq:beta_k_y_k_def_sc}, there exists such a subgradient $\nabla f(y^k)$ that either
\begin{enumerate}
    \item $\beta_k = 0$, $\langle \nabla f(y^k),x^k - v^k \rangle \geqslant 0$, $y^k = v^k$;
    \item $\beta_k \in (0,1)$ and $\langle \nabla f(y^k),x^k - v^k \rangle = 0$, $y^k = v^k + \beta_k (x^k - v^k)$;
    \item $\beta_k = 1$ and $\langle \nabla f(y^k),x^k - v^k \rangle \leqslant 0$, $y^k = x^k$ .
\end{enumerate}
In all three cases,  $\langle \nabla f(y^k), v^k - y^k \rangle \geqslant 0$. Thus,
%and \sg{$\beta_k = \arg\min _{\beta \in \left[0, 1 \right]} f\left(v^k + \beta (x^k - v^k)\right)$; $y^k = v^k + \beta_k (x^k - v^k)$. Then $f(y^k)\leqslant f(x^k)$, and either $\langle \nabla f(y^k), v^k - y^k \rangle=0$ from optimality conditions, or $\beta_k=0$ and $\langle \nabla f(y^k), v^k - y^k \rangle\geqslant0$ from optimality conditions, or $\beta_k=1$ and $v^k-y^k=0$}. From these two inequalities and the fact that $A_{k+1} = A_{k} +a_{k+1}$ one can obtain
$$\psi_{k+1}(v^{k+1}) \geqslant \min_{x \in \mathbb{R}^n} \left\{ A_{k}f(y^k)-\frac{A_{k}\eps}{2} +\frac{\tau_k}{2}\|x-v^k\|^2 + a_{k+1}\{f(y^k) + \langle \nabla f(y^k), x - y^k \rangle+\frac{\mu}{2}\|x-y^k\|^2\} \right\}. $$

The explicit solution to this quadratic minimization problem is \[x=\frac{1}{\tau_{k+1}}(\tau_k v^k+\mu a_{k+1}y^k-a_{k+1}\nabla f(y^k)).\] By plugging in the solution and using $\langle \nabla f(y^k), v^k - y^k \rangle \geqslant 0$, we obtain 

\[\psi_{k+1}(v^{k+1})\geqslant  A_{k+1}f(y^k)-\frac{A_{k}\eps}{2} - \frac{a_{k+1}^2}{2\tau_{k+1}}\|\nabla f(y^k)\|_2^2+\frac{\mu\tau_k a_{k+1}}{2\tau_{k+1}}\|v^k-y^k\|^2.\]
Our next goal is to show that
\begin{equation} 
\label{eq:ind_fin_step_sc_ns}
A_{k+1}f(y^k)-\frac{A_{k}\eps}{2} - \frac{a_{k+1}^2}{2\tau_{k+1}}\|\nabla f(y^k)\|_2^2+\frac{\mu\tau_k a_{k+1}}{2\tau_{k+1}}\|v^k-y^k\|^2 \geqslant A_{k+1}f(x^{k+1})-\frac{A_{k+1}\eps}{2},
\end{equation}
which proves the induction step. But this is guaranteed by the choice of $a_{k+1}$ in the step 4 of the method as the solution of the equation\begin{equation}
    \label{eq:Th:Main_sc_ns_proof_1}
    f(y^k) - \frac{a_{k+1}^2}{2A_{k+1}\tau_{k+1}} \|\nabla f(y^k) \|_2^2+\frac{\mu\tau_k a_{k+1}}{2A_{k+1}\tau_{k+1}}\|v^k-y^k\|^2=f(x^{k+1})-\frac{a_{k+1}\eps}{2A_{k+1}}.
\end{equation}
It remains to show that this equation has a solution $a_{k+1} > 0$. Again, this is a quadratic equation with the greatest solution given by \[a_{k+1}=\frac{-S_{k,\eps}+\sqrt{S_{k,\eps}^2-8A_k\delta_k\tau_k(2\delta_{k,\eps}\mu+\|\nabla f(y_k)\|^2)}}{4\delta_{k,\eps}\mu+2\|\nabla f(y_k)\|_2^2},\] where $\delta_k=f(x^{k+1})-f(y^k)$, $\delta_{k,\eps}=\delta_k-\frac{\eps}{2}, S_{k,\eps}=2\delta_{k,\eps}\tau_k-2\mu A_k\delta_k+\mu\tau_k\|v^k-y^k\|^2$ 

Note that if the objective is $\mu$-strongly convex, it is true that $\forall x\in\mathbb{R}^n$ $f(x)-f(x_\ast)\leqslant \frac{1}{2\mu}\|\nabla f(x)\|^2$. Hence, \[2\delta_{k,\eps}\mu+\|\nabla f(y_k)\|^2\geqslant-\frac{\eps}{2}+f(x^{k+1})-f(x_*).\] This means that $a_{k+1}$ may only be negative or undefined if $f(x^{k+1})-f(x_*)\leqslant \frac{\eps}{2}$, which means that $f(x^{k+1})$ is already an  $\frac{\eps}{2}$-accurate solution to the problem. 

Let us estimate the rate of the growth for $A_k$. By the same argument as the one used in the proof of \textbf{Theorem~}\ref{universal-Ak-rate} we have \[\frac{a^2_k}{A_k\tau_k}\geqslant\frac{1}{M_\nu}\left[\frac{1+\nu}{1-\nu}\frac{\eps}{M_\nu}\right]^{\frac{1-\nu}{1+\nu}}\left[\frac{a_k}{A_k}\right]^{\frac{1-\nu}{1+\nu}}\] Using that we obtain

\[\frac{a^{\frac{1+3\nu}{1+\nu}}_k}{A_k^{\frac{2\nu}{1+\nu}}}\geqslant\frac{1}{M_\nu}\left[\frac{1+\nu}{1-\nu}\frac{\eps}{M_\nu}\right]^{\frac{1-\nu}{1+\nu}}(1+\mu A_k)\geqslant\frac{1}{M_\nu}\left[\frac{1+\nu}{1-\nu}\frac{\eps}{M_\nu}\right]^{\frac{1-\nu}{1+\nu}}\mu A_k,\] or, if we denote $\gamma=\frac{1+\nu}{1+3\nu}$, \[a_k\geqslant \frac{1}{M^\gamma_\nu}\left[\frac{1+\nu}{1-\nu}\frac{\eps}{M_\nu}\right]^{\frac{1-\nu}{1+3\nu}}\mu^\gamma A_k.\]

To get the left term in the stated lower bound on $A_k$ we write
\begin{equation}A^\gamma_{i}-A^\gamma_{i-1}\geqslant \frac{A_{i}-A_{i-1}}{A_{i}^{1-\gamma}+A_{i-1}^{1-\gamma}}\geqslant\frac{a_{i}}{2A_{i}^{1-\gamma}}\geqslant\frac{1}{2M_\nu^\frac{2}{1+3\nu}}\left[\frac{1+\nu}{1-\nu}\eps\right]^\frac{1-\nu}{1+3\nu}\left(1+\mu A_i \right)^\gamma. \label{A_k_sc_ns}
\end{equation} From this we obtain a weaker inequality \[A^\gamma_{i}-A^\gamma_i\geqslant\frac{1}{2M_\nu^\frac{2}{1+3\nu}}\left[\frac{1+\nu}{1-\nu}\eps\right]^\frac{1-\nu}{1+3\nu}.\] Now we telescope it for $i=0,\ldots,k$ and get

%\[\bm{A}_1\geqslant\left[\frac{1+\nu}{1-\nu}\right]^\frac{1-\nu}{1+\nu}\frac{\eps^\frac{1-\nu}{1+\nu}}{2^\frac{1+3\nu}{1+\nu}M_\nu^\frac{2}{1+\nu}},\]
\begin{equation}
    A_k-A_0=A_k\geqslant\left[\frac{1+\nu}{1-\nu}\right]^\frac{1-\nu}{1+\nu}\frac{k^\frac{1+3\nu}{1+\nu}\eps^\frac{1-\nu}{1+\nu}}{2^\frac{1+3\nu}{1+\nu}M_\nu^\frac{2}{1+\nu}}.\label{A_k_sc_ns_2}
\end{equation}

To get the right term we observe that \[A_{k+1}=A_k+a_{k+1}\geqslant A_k+\frac{1}{M^\gamma_\nu}\left[\frac{1+\nu}{1-\nu}\frac{\eps}{M_\nu}\right]^{\frac{1-\nu}{1+3\nu}}\mu^\gamma A_{k+1},\] which leads to \[A_{k+1}\geqslant\left(1-\frac{1}{M^\gamma_\nu}\left[\frac{1+\nu}{1-\nu}\frac{\eps}{M_\nu}\right]^{\frac{1-\nu}{1+3\nu}}\mu^\gamma\right)^{-1}A_k.\] To use this bound we only need to estimate $A_1$, which we can do as follows: \[A_1=\frac{a_1^2}{A_1}\geqslant\frac{a_{1}^2}{(1+\mu A_1)A_{1}}=\frac{a_{1}^2}{\tau_{1}A_{1}} \geqslant \frac{1}{M_\nu}\left[\frac{1+\nu}{1-\nu}\frac{\eps}{M_\nu}\right]^{\frac{1-\nu}{1+\nu}}\]

By recursively applying the last bound we reach the desired result:

\[A_k\geqslant\frac{1}{M_\nu}\left[\frac{1+\nu}{1-\nu}\frac{\eps}{M_\nu}\right]^{\frac{1-\nu}{1+\nu}}\left(1-\frac{1}{M^\gamma_\nu}\left[\frac{1+\nu}{1-\nu}\frac{\eps}{M_\nu}\right]^{\frac{1-\nu}{1+3\nu}}\mu^\gamma\right)^{-(k-1)}\]

By combining both bounds we get the statement of the theorem:

\[A_k\geqslant\max\left\lbrace \left[\frac{1+\nu}{1-\nu}\right]^\frac{1-\nu}{1+\nu}\frac{k^\frac{1+3\nu}{1+\nu}\eps^\frac{1-\nu}{1+\nu}}{2^\frac{1+3\nu}{1+\nu}M_\nu^\frac{2}{1+\nu}},\   \frac{1}{M_\nu}\left[\frac{1+\nu}{1-\nu}\frac{\eps}{M_\nu}\right]^{\frac{1-\nu}{1+\nu}}\left(1-M_\nu^{-\frac{1+\nu}{1+3\nu}}\left[\frac{1+\nu}{1-\nu}\frac{\eps}{M_\nu}\right]^{\frac{1-\nu}{1+3\nu}}\mu^\frac{1+\nu}{1+3\nu}\right)^{-(k-1)}\right\rbrace\]

It has already been established in \textbf{Theorem~}\ref{SC_conv} that 
$$\min_{x \in \mathbb{R}^n} \psi_{k}(x) \leqslant \psi_{k}(x_*) = l_{k-1}(x_*) + \frac{1}{2}\|x_0-x_*\|^2_2 \leqslant A_k f(x_*) + \frac{1}{2}\|x_0-x_*\|^2_2.$$ In conjuction with \eqref{eq:main_recurrence_sc_ns} this gives us

\[f(x^N)-f(x_*)\leqslant\frac{R^2}{2A_T}+\frac{\eps}{2}.\]

The first term  in \eqref{eq:sc_ns_iterations}  has already been established previously. It remains to analyze rate of convergence if the maximum in \eqref{eq:A_k_max_rate} is achieved on the second ter,. We need to satisfy

\[\frac{R^2}{2A_T}+\frac{\eps}{2}\leqslant\eps.\]

\[M_\nu R^2 \left[\frac{1-\nu}{1+\nu}\frac{M_\nu}{\eps}\right]^{\frac{1-\nu}{1+\nu}}\left(1-M_\nu^{-\frac{1+\nu}{1+3\nu}}\mu^{\frac{1+\nu}{1+3\nu}}\left[\frac{1+\nu}{1-\nu}\frac{\eps}{M_\nu}\right]^\frac{1-\nu}{1+3\nu}\right)^{N+1}\leqslant\eps\]Taking the natural logarithm of both sides, we obtain

\[(N+1)\ln{\left(1-M_\nu^{-\frac{1+\nu}{1+3\nu}}\mu^{\frac{1+\nu}{1+3\nu}}\left[\frac{1+\nu}{1-\nu}\frac{\eps}{M_\nu}\right]^\frac{1-\nu}{1+3\nu}\right)}\leqslant\ln{\left(\frac{\eps^{\frac{2}{1+\nu}}}{M_\nu^{\frac{2}{1+\nu}}R^2}\left[\frac{1+\nu}{1-\nu}\right]^\frac{1-\nu}{1+\nu}\right)}.\]

We now use $\ln(1-x)\leqslant -x$ and $N<N+1$ to get the final result: with such $A_k$ $f(x^N)-f(x_*)\leqslant\eps$ if

\[N\geqslant\frac{M_\nu^{\frac{2}{1+3\nu}}}{\eps^{\frac{1-\nu}{1+3\nu}\mu^\frac{1+\nu}{1+3\nu}}}\ln{\left(\frac{\eps^{\frac{2}{1+\nu}}}{M_\nu^{\frac{2}{1+\nu}}R^2}\left[\frac{1+\nu}{1-\nu}\right]^\frac{1-\nu}{1+\nu}\right)}.\]

\end{proof}

Note that if $\nu=0$ we have 
\[ N\leqslant\frac{M_0^2}{\eps\mu}\ln{\left(\frac{M_0^2R^2}{\eps^2}\right)},\]which is optimal up to a logarithmic factor \cite{nemirovsky1983problem}.

\section{Application to problems with linear constraints}
\label{S:p-d}

% \subsection{Dual results}
% Assume that we have to solve a convex optimization problem of the form
% \begin{equation}
% \label{DualProb}
% \phi(z) \to \min_{Az = 0}.
% \end{equation}
% One can build a dual (up to a sign) problem
% $$f(x) = \max_{z}\left\{\langle x, Az \rangle - \phi(z) \right\} = \langle x, Az(x) \rangle - \phi(z(x))   \to \min_{x\in\mathbb{R}}.$$

% Assume that $\phi(z)$ is $\mu$-strongly convex in 2-norm. Then $f(x)$ is $L$-smooth with $L = \frac{\lambda_{\max}(A)}{\mu}$. Note that according to Danskin's theorem $\nabla f(x) = Az(x)$.

% Let's apply our methods to \eqref{DualProb} with $x^0 = v^0 = 0$. We introduce
% $$\tilde{z}^N = \frac{1}{A_{N}}\sum_{k=0}^{N-1} a_{k+1} z(y^k).$$

% \begin{theorem} For APDGD and APDLSGD
% $$f(x^N)  + \phi(\tilde{z}^N) = O\left(\frac{LR^2}{N^2}\right),$$
% $$\left\|A\tilde{z}^N\right\|_2 = O\left(\frac{LR}{N^2}\right).$$
% where $R = \|x_*\|_2$.
% \iffalse
% For APDLSGD
% $$f(x^N)  + \phi(\tilde{z}^N) = O\left(\frac{L\hat{R}^2}{N^2}\right),$$
% $$\left\|A\tilde{z}^N\right\|_2 = O\left(\frac{L\hat{R}}{N^2}\right).$$
% where $\hat{R} = \max_{x: f(x)\leqslant f(0)} \|x\|_2$.\fi
% \end{theorem}

% Importantly, in some cases (for example, when $\phi(z)$ has separable structure $\phi(z) = \sum_{i=1}^m \phi_{i}(z_{i})$ and $A$ is a dense almost square matrix), the most computationally difficult term in $f(x)$ has the form $F(A^T x)$. This structure may be used to significantly reduce the complexity of line-search \cite{guminov2017accelerated}.

In this section, we consider a minimization problem with linear equality. The idea is to construct the dual problem and solve it by our UAGMsDR method endowed with a step in the primal space. Following \cite{dvurechensky2017adaptive,dvurechensky2018computational}, we show that this modification solves simultaneously both primal and dual problems and allows to obtain convergence rate. 
% For simplicity, we consider strongly convex primal objective. Then the dual problem has objective with Lipschitz-continuous gradient. A more general case of the primal problem not being strongly convex can be resolved by applying universal method UAGMsDR to the dual problem.

Specifically, we consider the following minimization problem 
\begin{equation}
(P_1) \quad \quad \min_{x\in Q \subseteq E} \left\{ f(x) : \bm{A}x =b \right\},
\notag
%\label{eq:P_1}
\end{equation}
where $E$ is a finite-dimensional real vector space, $Q$ is a simple closed convex set, $\bm{A}$ is given linear operator from $E$ to some finite-dimensional real vector space $H$, $b \in H$ is given.
The Lagrange dual problem to Problem $(P_1)$ is
\begin{equation}
(D_1) \quad \quad \max_{\lambda \in \Lambda} \left\{ - \la \lambda, b \ra  + \min_{x\in Q} \left( f(x) + \la \bm{A}^T \lambda  ,x \ra \right) \right\}.
\notag
%\label{eq:D_1}
\end{equation}
Here we denote $\Lambda=H^*$.
% Here we denote $\Lambda =\{\lambda = (\lambda^{(1)},\lambda^{(2)})^T \in H_1^* \times H_2^*: \lambda^{(2)} \in K^*\}$. 
It is convenient to rewrite Problem $(D_1)$ in the equivalent form of a minimization problem
\begin{align}
& (P_2) \quad \min_{\lambda \in \Lambda} \left\{   \la \lambda, b \ra  + \max_{x\in Q} \left( -f(x) - \la \bm{A}^T \lambda  ,x \ra \right) \right\}. \notag
%\label{eq:P_2}
\end{align}
We denote
\begin{equation}
\vp(\lambda) =  \la \lambda, b \ra  + \max_{x\in Q} \left( -f(x) - \la \bm{A}^T \lambda  ,x \ra \right).
\label{eq:vp_def}
\end{equation}
Since $f$ is convex, $\vp(\lambda)$ is a convex function and, by Danskin's theorem, its subgradient is equal to (see e.g. \cite{nesterov2005smooth})
\begin{equation}
\nabla \vp(\lambda) = b - \bm{A} x (\lambda)
% \left(
% \begin{aligned}
% b_1 - \bm{A}_1 x (\lambda)\\
% b_2 - \bm{A}_2 x (\lambda)\\
% \end{aligned}
%  \right),
\label{eq:nvp}
\end{equation}
where $x (\lambda)$ is some solution of the convex problem
\begin{equation}
\max_{x\in Q} \left( -f(x) - \la \bm{A}^T \lambda  ,x \ra \right).
\label{eq:inner}
\end{equation}

In what follows, we make the following assumptions about the dual problem $(D_1)$
\begin{itemize}
    \item Subgradient of the objective function $\vp(\lambda)$ satisfies H\"older condition \eqref{eq:hold_cond} with constant $M_{\nu}$.
    \item The dual problem $(D_1)$ has a solution $\lambda^*$ and there exist some $R>0$ such that
	\begin{equation}
	\|\lambda^{*}\|_{2} \leqslant R < +\infty. 
	\label{eq:l_bound}
	\end{equation}
\end{itemize}
It is worth noting that the quantity $R$ will be used only in the convergence analysis, but not in the algorithm itself. As it was pointed in \cite{yurtsever2015universal}, the first assumption is reasonable. Namely, if the set $Q$ is bounded, then $\nabla \vp(\lambda)$ is bounded and \eqref{L} holds with $\nu = 0$. If $f(x)$ is uniformly convex, i.e., for all $x,y \in Q$, $\la \nabla f(x) - \nabla f(y) \ra \geqslant \mu \|x-y\|^{\rho}$, for some $\mu > 0$, $\rho \geqslant 2$, then $\nabla \vp(\lambda)$ satisfies \eqref{L} with $\nu = \frac{1}{\rho-1}$, $M_{\nu} = \left(\frac{\|\bm{A}\|_{E \to H}^2}{\mu}\right)^{\frac{1}{\rho-1}}$. Here the norm of an operator $\bm{A}:E_1 \to E_2$ is defined as follows
$$
\|\bm{A}\|_{E_1 \to E_2} = \max_{x \in E_1,u \in E_2^*} \{\la u, \bm{A} x \ra : \|x\|_{E_1} = 1, \|u\|_{E_2,*} = 1 \}.
$$

We choose Euclidean proximal setup, which means that we introduce Euclidean norm $\|\cdot \|_2$ in the space of vectors $\lambda$ and choose the prox-function $d(\lambda) = \frac12\|\lambda\|_2^2$. Then, we have $V[\zeta](\lambda) = \frac12\|\lambda-\zeta\|_2^2$.
Our primal-dual algorithm for Problem $(P_1)$ is listed below as Algorithm \ref{Alg:PDULSGD}. 

\begin{algorithm}[h!]
\caption{PDUGDsDR}
\label{Alg:PDULSGD}
{\small
\begin{algorithmic}[1]
   \REQUIRE starting point $\lambda_0 = 0$, accuracy $\tilde{\eps}_f,\tilde{\eps}_{eq} > 0$.
   \STATE Set $k=0$, $A_0=\alpha_0=0$, $\eta_0=\zeta_0=\lambda_0=0$.
   \REPEAT
		\STATE $\beta_k = \arg\min_{\beta \in \left[0, 1 \right]} \vp\left(\zeta^k + \beta (\eta^k - \zeta^k)\right)$; $\lambda^k = \zeta^k + \beta_k (\eta^k - \zeta^k)$ 
		%x -> \eta ; v -> \zeta; y -> \lambda
		\STATE $h_{k+1} = \arg\min_{h \geqslant 0} \vp\left(\lambda^k - h\nabla \vp(\lambda^k)\right)$; $\eta^{k+1} = \lambda^k- h_{k+1}\nabla \vp(\lambda^k)$ // Choose $\nabla \vp(\lambda^k)$ : $\langle \nabla \vp(\lambda^k), \zeta^k - \lambda^k \rangle \geqslant 0$
		\STATE Choose $a_{k+1}$ from $\vp(\eta^{k+1}) = \vp(\lambda^k) - \frac{a_{k+1}^2}{2A_{k+1}} \|\nabla \vp(\lambda^k) \|_2^2 + \frac{\varepsilon a_{k+1}}{2A_{k+1}}$ // $A_{k+1} = A_{k} + a_{k+1}$
		\STATE $\zeta^{k+1} = \zeta^k - a_{k+1}\nabla \vp(\lambda^k)$
		\STATE Set
				\begin{equation}
					\hat{x}^{k+1} = \frac{1}{A_{k+1}}\sum_{i=0}^{k} a_{i+1} x(\lambda^i) = \frac{a_{k+1}x(\lambda^{k})+A_k\hat{x}^{k}}{A_{k+1}}.
				\notag
				%\label{eq:xhat_def}
				\end{equation}
			\STATE Set $k=k+1$.
  \UNTIL{$|f(\hat{x}^{k+1})+\vp(\eta^{k+1})| \leqslant \tilde{\eps}_f$, $\|\bm{A}\hat{x}^{k+1}-b\|_{2} \leqslant \tilde{\eps}_{eq}$.}
	\ENSURE The points $\hat{x}^{k+1}$, $\eta^{k+1}$.	
\end{algorithmic}
}
\end{algorithm}

\begin{theorem}
\label{Th:PD_rate}
Let the objective $\vp$ in the problem $(P_2)$ have H\"older-continuous subgradient and the solution of this problem be bounded, i.e. $\|\lambda^*\|_2 \leqslant R$. Then, for the sequence $\hat{x}^{k+1},\eta^{k+1}$, $k\geqslant 0$, generated by Algorithm \ref{Alg:PDULSGD}, 
\begin{align}
&\|\bm{A} \hat{x}^k - b \|_2 \leqslant \frac{2R}{A_k}+ \frac{\eps}{2R}, \quad |\vp(\eta^k) + f(\hat{x}^k)| \leqslant \frac{2R^2}{A_k}+ \frac{\eps}{2},
\label{eq:untileq}
\end{align}
where $A_k \geqslant \left[\frac{1+\nu}{1-\nu}\right]^\frac{1-\nu}{1+\nu}\frac{k^\frac{1+3\nu}{1+\nu}\eps^\frac{1-\nu}{1+\nu}}{2^\frac{1+3\nu}{1+\nu}M_\nu^\frac{2}{1+\nu}}$.
% Moreover, no later than $k$ equals to  
% $$
% \max \left\{ \left\lceil \sqrt{\frac{32L(R_1^2+R_2^2)}{\eps_f}}  \right\rceil, \left\lceil \sqrt{\frac{16L(R_1^2+R_2^2)}{R_1\eps_{eq}}} \right\rceil, \left\lceil \sqrt{\frac{16L(R_1^2+R_2^2)}{R_2\eps_{in}}} \right\rceil \right\}, 
% $$
% the point $\hat{x}^{k+1}$ generated by Algorithm \ref{Alg:PDULSGD} is an approximate solution to Problem $(P_1)$ in the sense 
% \begin{equation}
% |f(\hat{x}) - Opt[P_1]| \leqslant \eps_f , \quad \|\bm{A}_1\hat{x}-b_1\|_2 \leqslant \eps_{eq}, \quad \rho(\bm{A}_2\hat{x}-b_2,-K) \leqslant \eps_{in}.
% \label{eq:sol_def}
% \end{equation}
\label{Th:PDASTMConv}
\end{theorem}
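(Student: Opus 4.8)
The plan is to observe that Algorithm \ref{Alg:PDULSGD} is exactly the universal method UAGMsDR (Algorithm \ref{UAGMsDR}) run on the convex dual objective $\vp$ with the Euclidean prox-setup $d(\lambda)=\tfrac12\|\lambda\|_2^2$, $\lambda_0=0$ (here $\eta^k,\lambda^k,\zeta^k$ play the roles of $x^k,y^k,v^k$, and in the Euclidean setup the steepest-descent step $\lambda^k-h\nabla\vp(\lambda^k)$ produces the same iterate as the one written with $(\nabla\vp(\lambda^k))^{\#}$), augmented only by the primal averaging that produces $\hat x^{k+1}$. Hence Theorem \ref{universal-Ak-rate} applies to $\vp$ verbatim and gives at once the stated lower bound on $A_k$ together with the key inequality
\[ A_k\vp(\eta^k)\ \leqslant\ \min_{\lambda}\psi_k(\lambda)+\frac{A_k\eps}{2}\ \leqslant\ \psi_k(\lambda)+\frac{A_k\eps}{2}\qquad\text{for all }\lambda, \]
where $\psi_k(\lambda)=\tfrac12\|\lambda\|_2^2+\sum_{i=0}^{k-1}a_{i+1}\{\vp(\lambda^i)+\la\nabla\vp(\lambda^i),\lambda-\lambda^i\ra\}$. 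So the only remaining work is to translate this dual certificate into the two primal-dual bounds \eqref{eq:untileq}.

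The second step is to simplify the linear model inside $\psi_k$. Since $x(\lambda^i)$ is a maximizer in \eqref{eq:inner}, $\vp(\lambda^i)=\la\lambda^i,b\ra-f(x(\lambda^i))-\la\bm A^T\lambda^i,x(\lambda^i)\ra$, and by \eqref{eq:nvp}, $\nabla\vp(\lambda^i)=b-\bm Ax(\lambda^i)$; substituting, the $i$-th term collapses to $-f(x(\lambda^i))+\la b-\bm Ax(\lambda^i),\lambda\ra$, with all $\lambda^i$-dependence cancelling. Summing with weights $a_{i+1}$, using $\sum_{i=0}^{k-1}a_{i+1}=A_k$ and the update $\hat x^k=\tfrac1{A_k}\sum_{i=0}^{k-1}a_{i+1}x(\lambda^i)$, and applying convexity of $f$ via Jensen's inequality ($\sum_i\tfrac{a_{i+1}}{A_k}f(x(\lambda^i))\geqslant f(\hat x^k)$), I obtain the master inequality
\[ \vp(\eta^k)+f(\hat x^k)\ \leqslant\ \frac{\|\lambda\|_2^2}{2A_k}-\la\bm A\hat x^k-b,\lambda\ra+\frac{\eps}{2}\qquad\text{for all }\lambda. \]

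The third step supplies the matching lower bound from weak duality: plugging $\hat x^k$ into the inner maximum in the definition \eqref{eq:vp_def} of $\vp(\lambda^*)$ gives $\vp(\lambda^*)\geqslant-f(\hat x^k)-\la\lambda^*,\bm A\hat x^k-b\ra$, and since $\lambda^*$ minimizes $\vp$ (it solves $(P_2)$) we have $\vp(\eta^k)\geqslant\vp(\lambda^*)$, hence $\vp(\eta^k)+f(\hat x^k)\geqslant-\la\lambda^*,\bm A\hat x^k-b\ra$. For the feasibility estimate I would evaluate the master inequality at $\lambda=\lambda^*-R\,\dfrac{b-\bm A\hat x^k}{\|b-\bm A\hat x^k\|_2}$ (admissible since then $\|\lambda\|_2\leqslant\|\lambda^*\|_2+R\leqslant2R$; the case $\bm A\hat x^k=b$ makes both bounds trivial): the terms $\la\lambda^*,b-\bm A\hat x^k\ra$ cancel and what remains is $R\,\|\bm A\hat x^k-b\|_2\leqslant\tfrac{2R^2}{A_k}+\tfrac{\eps}{2}$, i.e. the first bound in \eqref{eq:untileq}. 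For the duality gap, the master inequality at $\lambda=0$ gives $\vp(\eta^k)+f(\hat x^k)\leqslant\tfrac{\eps}{2}$, while the lower bound combined with the feasibility estimate just proved gives $\vp(\eta^k)+f(\hat x^k)\geqslant-R\,\|\bm A\hat x^k-b\|_2\geqslant-\tfrac{2R^2}{A_k}-\tfrac{\eps}{2}$; together these yield $|\vp(\eta^k)+f(\hat x^k)|\leqslant\tfrac{2R^2}{A_k}+\tfrac{\eps}{2}$.

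The main obstacle is conceptual rather than computational: recognizing that, by the Fenchel/Danskin structure, the dual estimating function $\psi_k$ is a quadratic-plus-linear function of $\lambda$ whose minimizer regenerates the averaged primal point $\hat x^k$, so that the dual accuracy certificate of Theorem \ref{universal-Ak-rate} automatically encodes a primal feasibility-and-optimality certificate. Once the master inequality is in place, the only mildly delicate point is the choice of the test vector $\lambda=\lambda^*-R(b-\bm A\hat x^k)/\|b-\bm A\hat x^k\|_2$ of norm at most $2R$, which is exactly what turns $\tfrac{\|\lambda\|_2^2}{2A_k}$ into $\tfrac{2R^2}{A_k}$ and, after dividing through by $R$, the $\eps$-term into $\tfrac{\eps}{2R}$; everything else (applicability of Theorem \ref{universal-Ak-rate}, Jensen, weak duality) is routine bookkeeping.
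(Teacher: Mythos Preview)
Your proposal is correct and follows essentially the same route as the paper: invoke Theorem~\ref{universal-Ak-rate} for $\vp$, collapse the linear models via the Danskin identity $\vp(\lambda^i)-\la\nabla\vp(\lambda^i),\lambda^i\ra=-f(x(\lambda^i))$, apply Jensen to pass to $\hat x^k$, and combine with weak duality. The only cosmetic difference is in the choice of test vector: the paper minimizes $\la b-\bm A\hat x^k,\lambda\ra$ over the whole ball $\{\|\lambda\|_2\leqslant 2R\}$ (obtaining $-2R\|\bm A\hat x^k-b\|_2$) and then subtracts the weak-duality lower bound $-R\|\bm A\hat x^k-b\|_2$, whereas you pick $\lambda=\lambda^*-R(b-\bm A\hat x^k)/\|b-\bm A\hat x^k\|_2$ so that the $\la\lambda^*,\bm A\hat x^k-b\ra$ term cancels exactly; both yield the identical inequality $R\|\bm A\hat x^k-b\|_2\leqslant\tfrac{2R^2}{A_k}+\tfrac{\eps}{2}$. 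Your observation that $\lambda=0$ already gives $\vp(\eta^k)+f(\hat x^k)\leqslant\tfrac{\eps}{2}$ is in fact a slightly sharper upper bound than the paper records, though it makes no difference once you take the absolute value.
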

%\textbf{TODO:} fix the theorem statement.

\begin{proof}
The proof mostly follows the steps of our previous work \cite{chernov2016fast}, but we give the proof for the reader's convenience. The main difference is that here we use universal method. From Theorem \ref{Main}, since $\zeta_0=0$, we have, for all $k\geqslant 0$,  
\begin{equation}
A_k \vp(\eta^k) \leqslant \min_{\lambda \in \Lambda} \left\{  \sum_{i=0}^{k-1}{a_{i+1} \left( \vp(\lambda^i) + \la \nabla \vp(\lambda^i), \lambda-\lambda^i \ra \right) } + \frac{1}{2} \|\lambda\|_2^2 \right\} + \frac{A_k\eps}{2}
\label{eq:FGM_compl}
\end{equation}
Let us introduce a set $\Lambda_R =\{\lambda:  \|\lambda\|_2 \leqslant 2R \}$ where $R$ is given in \eqref{eq:l_bound}. Then, from \eqref{eq:FGM_compl}, we obtain
\begin{align}
A_k \vp(\eta^k) &\leqslant \min_{\lambda \in \Lambda} \left\{  \sum_{i=0}^{k-1}{a_{i+1} \left( \vp(\lambda^i) + \la \nabla \vp(\lambda^i), \lambda-\lambda^i \ra \right) } + \frac{1}{2} \|\lambda\|_2^2 \right\} + \frac{A_k\eps}{2} \notag \\
&\leqslant \min_{\lambda \in \Lambda_R} \left\{  \sum_{i=0}^{k-1}{a_{i+1} \left( \vp(\lambda^i) + \la \nabla \vp(\lambda^i), \lambda-\lambda^i \ra \right) } + \frac{1}{2} \|\lambda\|_2^2 \right\} + \frac{A_k\eps}{2} \notag \\
&\leqslant \min_{\lambda \in \Lambda_R} \left\{  \sum_{i=0}^{k-1}{a_{i+1} \left( \vp(\lambda^i) + \la \nabla \vp(\lambda^i), \lambda-\lambda^i \ra \right) } \right\} + 2R^2 + \frac{A_k\eps}{2}.
\label{eq:proof_st_1}
\end{align}
On the other hand, from the definition \eqref{eq:vp_def} of $\vp(\lambda)$, we have
\begin{align}
 \vp(\lambda^i) &  = \la \lambda^i, b \ra + \max_{x\in Q} \left( -f(x) - \la \bm{A}^T \lambda^i ,x \ra \right) \notag \\
& = \la \lambda^i, b \ra  - f(x(\lambda^i)) - \la \bm{A}^T \lambda^i  ,x(\lambda^i) \ra . \notag
\end{align}
Combining this equality with \eqref{eq:nvp}, we obtain
\begin{align}
\vp(\lambda^i) - \la \nabla \vp (\lambda^i), \lambda^i \ra & = \la \lambda^i, b \ra - f(x(\lambda^i)) - \la \bm{A}^T \lambda^i  ,x(\lambda^i) \ra \notag \\
& \hspace{1em} - \la b-\bm{A} x(\lambda^i),\lambda^i \ra  = - f(x(\lambda^i)). \notag
\end{align}
Summing these equalities from $i=0$ to $i=k-1$ with the weights $\{a_{i+1}\}_{i=0,...k-1}$, we get, using the convexity of $f$
\begin{align}
  \sum_{i=0}^{k-1}{a_{i+1} \left( \vp(\lambda^i) + \la \nabla \vp(\lambda^i), \lambda-\lambda^i \ra \right) }   &= -\sum_{i=0}^{k-1} a_{i+1} f(x(\lambda^i)) + \sum_{i=0}^{k-1} a_{i+1} \la (b-\bm{A} x(\lambda^i), \lambda \ra  \notag \\
& \leqslant -A_k f(\hat{x}^k) + A_{k} \la b-\bm{A} \hat{x}^{k}, \lambda\ra . \notag
\end{align}
Substituting this inequality to \eqref{eq:proof_st_1}, we obtain
\begin{align}
A_k \vp(\eta^k)  \leqslant &-A_kf(\hat{x}^k)  + A_k \min_{\lambda \in \Lambda_R} \left\{  \la b-\bm{A} \hat{x}^k, \lambda\ra \right\} + 2R^2 + \frac{A_k\eps}{2}. \notag
\end{align}
Finally, since 
$\max_{\lambda \in \Lambda_R} \left\{  \la -b+\bm{A} \hat{x}^k, \lambda \ra \right\} = 2 R \|\bm{A} \hat{x}^k - b \|_2$,
we obtain
\begin{equation}
\vp(\eta^k) + f(\hat{x}^k) +2 R \|\bm{A} \hat{x}^k - b \|_2 \leqslant \frac{2R^2}{A_k} + \frac{\eps}{2}.
\label{eq:vpmfxh}
\end{equation}
Since  $\lambda^*$ is an optimal solution of Problem $(D_1)$, we have, for any $x \in Q$
$$
Opt[P_1]\leqslant f(x) + \la  \lambda^*, \bm{A} x -b \ra .
$$
Using the assumption \eqref{eq:l_bound}, we get
\begin{equation}
f(\hat{x}^k) \geqslant Opt[P_1]- R \|\bm{A} \hat{x}^k - b \|_2 .
\label{eq:fxhat_est}
\end{equation}
Hence,
\begin{align}
 \vp(\eta^k) + f(\hat{x}^k)  & = \vp(\eta^k) - Opt[P_2]+Opt[P_2] + Opt[P_1]  - Opt[P_1] + f(\hat{x}^k)  \notag \\
& =\vp(\eta^k)  - Opt[P_2]-Opt[D_1]+Opt[P_1]  - Opt[P_1] + f(\hat{x}^k)  
 \notag \\
&  \geqslant  - Opt[P_1] + f(\hat{x}^k) \stackrel{\eqref{eq:fxhat_est}}{\geqslant} - R \|\bm{A} \hat{x}^k - b \|_2.
\label{eq:aux1}
\end{align}
This and \eqref{eq:vpmfxh} give
\begin{equation}
R \|\bm{A} \hat{x}^k - b \|_2 \leqslant \frac{2R^2}{A_k} + \frac{\eps}{2}.
\label{eq:R_norm_est}
\end{equation}
Hence, we obtain
\begin{equation}
\vp(\eta^k) + f(\hat{x}^k) \stackrel{\eqref{eq:aux1},\eqref{eq:R_norm_est}}{\geqslant} - \frac{2R^2}{A_k}- \frac{\eps}{2}.
\label{eq:vppfxhatgeq}
\end{equation}
On the other hand, we have 
\begin{equation}
\vp(\eta^k) + f(\hat{x}^k) \stackrel{\eqref{eq:vpmfxh}}{\leqslant} \frac{2R^2}{A_k}+ \frac{\eps}{2}.
\label{eq:vppfxhatleq}
\end{equation}
Combining \eqref{eq:R_norm_est}, \eqref{eq:vppfxhatgeq}, \eqref{eq:vppfxhatleq}, we conclude
\begin{align}
&\|\bm{A} \hat{x}^k - b \|_2 \leqslant \frac{2R}{A_k}+ \frac{\eps}{2R}, \notag \\
&|\vp(\eta^k) + f(\hat{x}^k)| \leqslant \frac{2R^2}{A_k}+ \frac{\eps}{2}.
\end{align}
From Theorem \ref{universal-Ak-rate}, for any $k\geqslant 0$, $A_k \geqslant \left[\frac{1+\nu}{1-\nu}\right]^\frac{1-\nu}{1+\nu}\frac{k^\frac{1+3\nu}{1+\nu}\eps^\frac{1-\nu}{1+\nu}}{2^\frac{1+3\nu}{1+\nu}M_\nu^\frac{2}{1+\nu}}$. Combining this and \eqref{eq:untileq}, we obtain the statement of the Theorem.
% Hence, in accordance to \eqref{eq:untileq}, when the iteration counter $k$ is equal to the number given in the theorem statement the stopping criterion fulfills and Algorithm \ref{Alg:PDULSGD} stops.

% Now let us prove the second statement of the theorem. 
% We have 
% \begin{align}
%  \vp(\eta^k) + Opt[P_1] & = \vp(\eta^k)  - Opt[P_2]+Opt[P_2]+Opt[P_1] \notag \\
% & =  \vp(\eta^k)  - Opt[P_2]-Opt[D_1]+Opt[P_1] \geqslant 0. 
% \notag
% %\label{eq:vppopt}
% \end{align} 
% Hence,
% \begin{equation}
% f(\hat{x}^k)- Opt[P_1] \leqslant f(\hat{x}^k) + \vp(\eta^k).
% \label{eq:fxhatmopt}
% \end{equation}
% On the other hand, 
% \begin{equation}
% f(\hat{x}_k) - Opt[P_1] \stackrel{\eqref{eq:fxhat_est}}{\geqslant}  - R \|\bm{A} \hat{x}_k - b \|_2.
% \label{eq:fxhat_est1}
% \end{equation}
% Note that, since the point $\hat{x}_k$ may not satisfy the linear constraints, one can not guarantee that $f(\hat{x}_k) - Opt[P_1] \geqslant 0$.
% From \eqref{eq:fxhatmopt}, \eqref{eq:fxhat_est1} we can see that if we set $\tilde{\eps}_f = \eps_f$, $\tilde{\eps}_{eq} = \min\{\frac{\eps_f}{2R},\eps_{eq}\}$ and run Algorithm \ref{Alg:PDULSGD} for the number of iterations given in the theorem statement, we obtain that \eqref{eq:sol_def} fulfills and $\hat{x}_k$ is an approximate solution to Problem $(P_1)$ in the sense of \eqref{eq:sol_def}. 
\end{proof}
Let us make a remark on complexity. As it can be seen from Theorem \ref{Th:PD_rate}, whenever $A_k \geqslant 2R^2/\eps$, the error in the objective value and equality constraints is smaller than $\eps$. A the same time, using the lower bound for $A_k$, we obtain that the number of iterations to achieve this accuracy is $O\left(\left(\frac{M_\nu^\frac{2}{1+\nu}R^2}{\eps^\frac{2}{1+\nu}} \right)^{\frac{1+\nu}{1+3\nu}}\right)$.

% \begin{equation}
%     f(z)\to\min_{\bm{A}z=b},\label{prime}
% \end{equation}
% where $f: \mathbb{R}^n\to\mathbb{R}$ is convex, and $\bm{A}\in\mathbb{R}^{m\times n}$, $b\in\mathbb{R}^m$. Then the dual minimization problem may be written down as\begin{equation}
% \phi(u)=\max_{z\in \mathbb{R}^n}\lbrace \langle u, b-\bm{A}z\rangle - f(x)\rbrace\to\min_{u\in\mathbb{R}^m}.\label{dual}
% \end{equation}
% Denote the solutions of the prime and dual problems as $z_\ast$ and $u_\ast$ respectively. 
% Denote the $\argmin$ in the dual problem as a function $\tilde{z}(u).$ If the $\argmin$ is not unique we assume that $\tilde{z}(u)$ is just some arbitrary minimizer. Then by Danskin's theorem the (sub)gradient of the dual objective is $\nabla \phi(u)=b-\bm{A}\tilde{z}(u)$. 

% We apply the UAPDLSGD method to the dual problem starting from the origin. Then we can rewrite the stopping criterion in the following way:
% \[\phi(x^k)-\frac{1}{A_k}\sum_{i=0}^{k-1} \alpha_{i+1} \langle y^i,b-\bm{A}\tilde{z}(y^i) \rangle+\frac{1}{A_k}\sum_{i=0}^{k-1} \alpha_{i+1} f(\tilde{z}(y_i))-\frac{1}{A_k}\min_{\|u\|\leqslant R}\sum_{i=0}^{k-1}\alpha_{i+1} \langle b-\bm{A}\tilde{z}(y_i),u-y_i\rangle \leqslant \eps.\]
% \[\phi(x^k)+\frac{1}{A_k}\sum_{i=0}^{k-1} \alpha_{i+1} f(\tilde{z}(y_i))-\frac{1}{A_k}\min_{\|u\|\leqslant R}\sum_{i=0}^{k-1}\alpha_{i+1} \langle b-\bm{A}\tilde{z}(y_i),u\rangle\leqslant \eps.\]
% Denote $\overline{z}^k=\frac{1}{A_k}\sum_{i=0}^{k-1} \alpha_{i+1} \tilde{z}(y_i)$. Then by the convexity of $f(z)$ we get

% \[\phi(x^k)+f(\overline{z}^k)+R\|b-\bm{A}\overline{z}^k\|\leqslant\eps.\]

\section{Numerical experiments}
\label{S:experiments}

\subsection{Smooth convex problem}
We tested the AGMsDR method on the problem of minimizing \[f(x)=\frac{L}{8}(x_1^2+\sum\limits_{i=1}^{n-1} (x_i-x_{i+1}^2)+x_n^2)-\frac{L}{4}x_1,\] where $L=10$ and the diminsionality $n=1000$. This is the function used to derive the lower complexity bound for the class of $L$-smooth convex objectives in \cite{nesterov2004introduction}. The results are presented below. The method was compared to the Linear Coupling method \cite{allen2014linear} and the Conjugate Gradients and BFGS methods implemented in the SciPy python package.

For all four methods the initial point was the origin. The results are presented below.
\begin{figure}[!h]%
    \centering
    \advance\leftskip-4cm
    \advance\rightskip-4cm
    \subfloat{{\includegraphics[width=8cm]{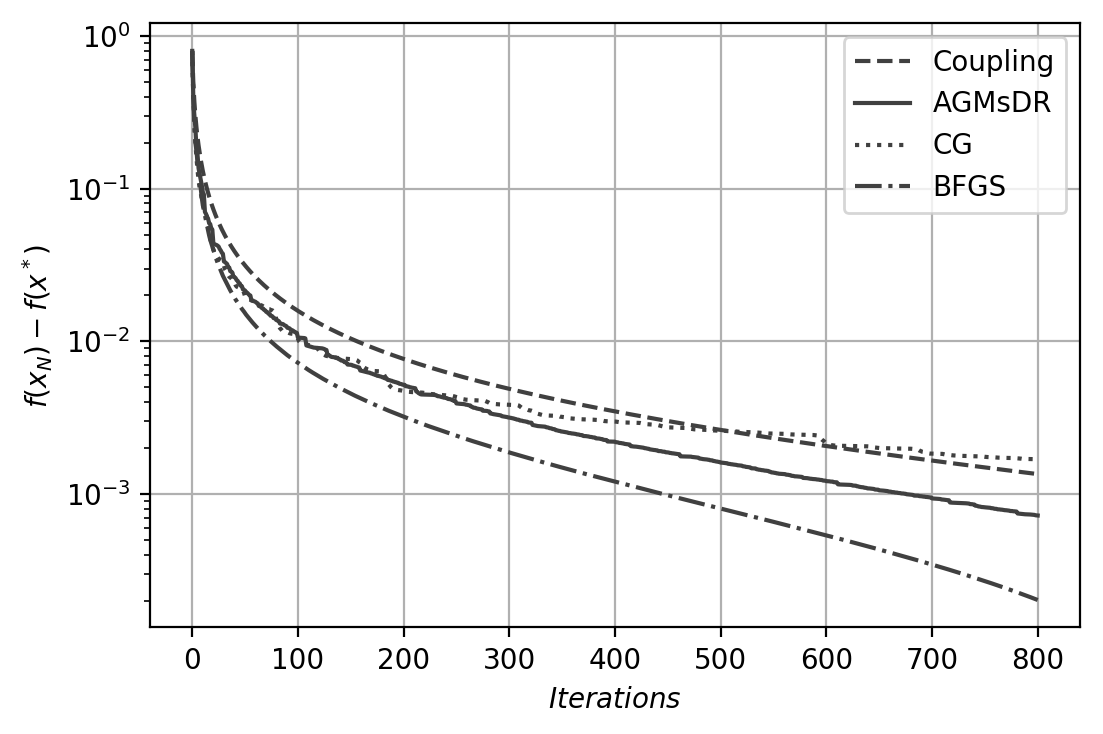} }}%
    \quad
    \subfloat{{\includegraphics[width=8cm]{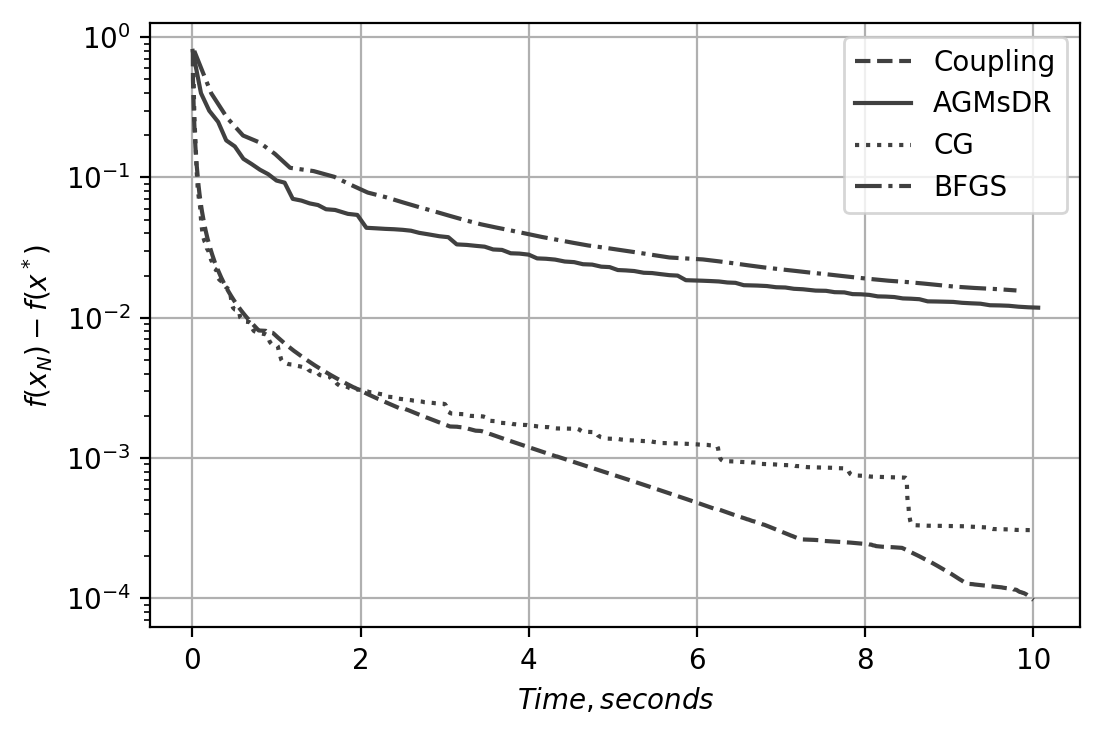} }}%
    \caption{Convergence for the smooth problem.}%
    \label{fig:smooth}%
\end{figure}

While utilizing line-search slightly improves the convergence rate in terms of required iterations compared to methods with fixed step sizes, the inherent complexity of line-search significantly increases the cost of each iteration. In the end, first-order methods with fixed step-sizes showed best results.  
\subsection{Non-smooth convex problem}
We compared three different universal methods -- UAGMsDR from this paper, Universal Linear Coupling Method from \cite{guminov2017universal} and Universal Fast Gradient Method from \cite{nesterov2015universal} -- on the MAXQ problem \cite{haarala2004new}:

\[f(x)=\max_{1\leqslant i\leqslant n} x_i^2=\|x\|_\infty^2, \] with the initial point

\[
    x^0_i = \left\{\begin{array}{rl}
        i,  &\text{for } i=1,\ldots,n/2\\
        -i, &\text{for } i=n/2+1,\ldots,n
        \end{array}\right.
  \] and $n=100$.
For all methods the accuracy $\eps$ was set to $5\cdot 10^{-4}$.

\begin{figure}[!h]
    \centering
    \includegraphics{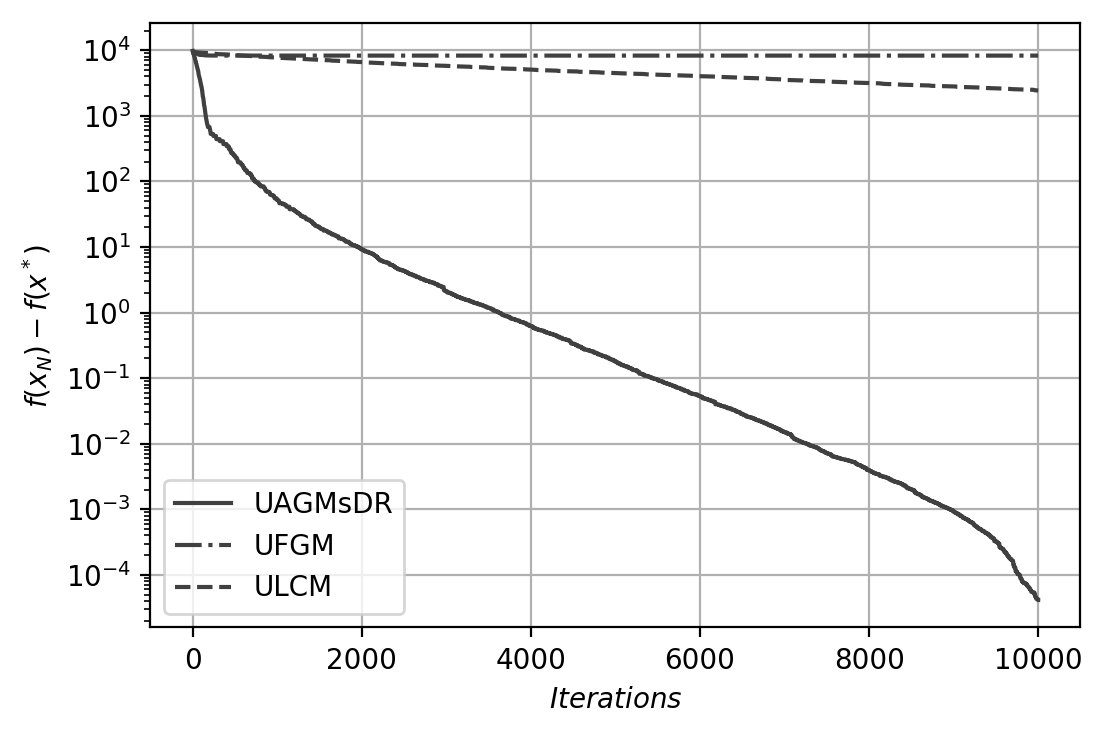}
    \caption{Convergence for the non-smooth problem.}
    \label{fig:nonsmooth}
\end{figure}

Even though all three methods have identical (up to a small constant multiplicative factor) theoretical convergence rates, for this problem the UAGMsDR method demonstrated practically linear convergence rate. It seems that using two line-searches in orthogonal directions helps the method use the fact that the graph of the function is, in a sense, similar to a quadratic function.

\subsection{Non-convex problem}
We consider the following non-convex objective with unique extremal point (which is the global minimum) at  $(1,1,1,...,1)$:
\[f(x) = \frac{1}{4}(x_1-1)^2 + \sum_{i=1}^n (x_{i+1} - 2x_{i}^2 + 1)^2\]
 and with the initial point $x^0=(-1,-1,-1,...,-1)$. Even with low dimensionality $n \simeq 15$ this problem is very hard. There are points $x$ such that $\|\nabla f(x)\|_2 \simeq 10^{-8}$, while at the same time typically $f(x) - f(x_*) = f(x) \simeq \frac{1}{2}$.
 
 To perform exact line-search for this problem we utilized the fact that the objective is a polynomial. For example, to minimize the objective over the line $\{y^k-h\nabla f(y^k)| h\in\mathbb{R}\}$ it is then sufficient to find the roots of the third degree polynomial $\frac{\partial}{\partial h} f(y^k-h\nabla f(y^k))$ and choose the one which corresponds to the lesser value of $f(y^k-h\nabla f(y^k))$. For all universal methods the accuracy $\eps$ was set to $5\cdot 10^{-4}$.

\begin{figure}[!h]%
    \centering
    \advance\leftskip-4cm
    \advance\rightskip-4cm
    \subfloat{{\includegraphics[width=8cm]{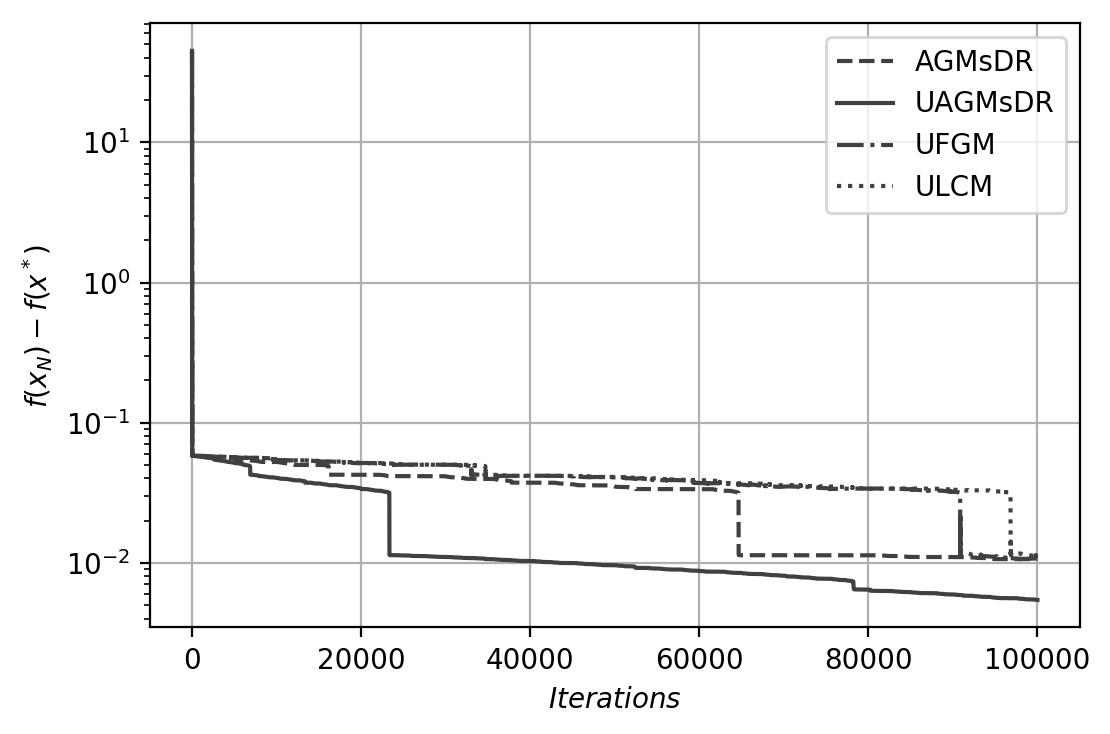} }}%
    \quad
    \subfloat{{\includegraphics[width=8cm]{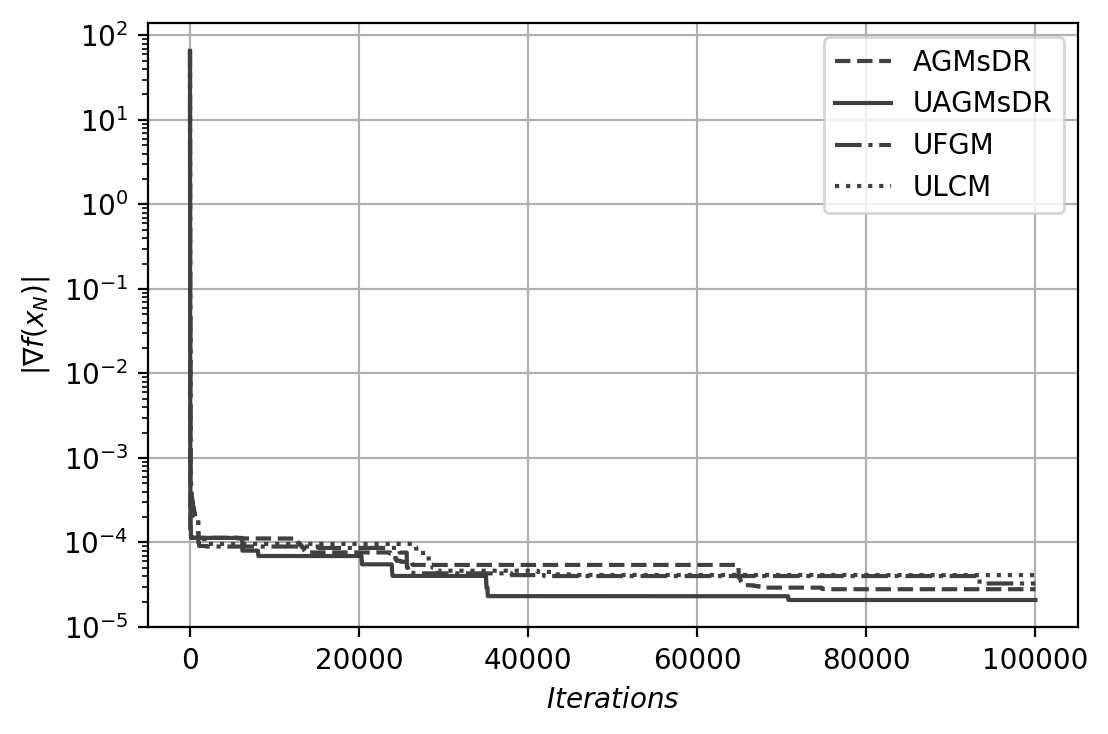} }}%
    \caption{Convergence for the smooth problem.}%
    \label{fig:nonconvex}%
\end{figure}

The universal UAGMsDR method demonstrates best performance, both in terms of convergence in gradient and the function's value.

\section{Funding}
The work in Section \ref{S:smooth} was funded by Russian Science Foundation (project 18-71-10108). The work in Section \ref{S:univ} was supported by grant RFBR 18-29-03071 mk. The work in Section \ref{S:p-d} was supported Grant of the President of the Russian Federation MD-1320.2018.1 and RFBR  18-31-20005 mol\_a\_ved. 
%The work was also supported by Alibaba.

% The work of Yurii Nesterov was supported by grant RNF 17-11-00150, the work of Pavel Dvurechensky was supported by grant RFBR 18-29-03071 mk, the work of Sergey Guminov was supported by President's of Russian Federation Grant MD-1320.2018.1 and RFBR  18-31-20005 mol_a_ved, the work of Alexander Gasnikov was supported by President's of Russian Federation Grant MD-1320.2018.1, RFBR  18-31-20005 mol_a_ved and Alibaba.
%\section*{Reference}
\bibliographystyle{tfs}
\bibliography{references,lib}

% % % % % % % % % % % % % % % % % % % % % % % % % % % % %
\end{document}